\documentclass[12pt,a4paper]{article}
\usepackage[bottom=1.5in, top=1.2in]{geometry}
\usepackage{amssymb,amsmath,amsthm}
\usepackage[english]{babel}
\usepackage{t1enc}
\usepackage[utf8]{inputenc}
\usepackage{epsfig}
\usepackage{subfig}
\usepackage{epstopdf}
\usepackage{xcolor}
\usepackage{comment}
\usepackage{hyperref}
\usepackage{graphicx}

\newtheorem{thm}{Theorem}[section]
\newtheorem{cor}[thm]{Corollary}
\newtheorem{lem}[thm]{Lemma}
\newtheorem{prop}[thm]{Proposition}
\newtheorem{claim}[thm]{Claim}

\newtheorem{obs}[thm]{Observation}

\newtheorem{ques}[thm]{Question}

\theoremstyle{definition}
\newtheorem{defi}[thm]{Definition}

\usepackage{indentfirst}
\def\red1{\color{red}1\color{black}}
\def\blue1{\color{blue}1\color{black}}

\newcommand{\floor}[1]{\lfloor #1 \rfloor}
\newcommand{\ceil}[1]{\lceil #1 \rceil}
\newcommand{\orarrow}[1]{\overrightarrow{#1}}
\newcommand{\shortto}{\mathrel{\scalebox{0.75}[1]{$\scriptstyle\rightarrow$}}}

\def\01{0\text{-}1}

\makeatletter
\def\blfootnote{\gdef\@thefnmark{}\@footnotetext}
\makeatother

\title{Saturation and semisaturation of oriented graphs}
\author{Vladimir Bo\v skovi\'c \thanks{HUN-REN Alfréd Rényi Institute of Mathematics, Budapest, Hungary.\\Email address: boskovic at renyi dot hu} \and Bal\'azs Keszegh\thanks{HUN-REN Alfréd Rényi Institute of Mathematics and ELTE Eötvös Loránd University, Budapest, Hungary.\\Email address: keszegh at renyi dot hu\\}}

\begin{document}

\maketitle

\begin{abstract}
    We consider a natural extension of the classical saturation problem to oriented graphs, defined as the minimum number of edges in an oriented graph $H$ on $n$ vertices that avoids a fixed oriented graph $G$, but adding any missing edge to $H$ creates a copy of $G$.

    Our main result shows that saturation functions of oriented graphs attain at least four distinct orders of magnitude. In particular, if $G$ is acyclic, then its saturation function is bounded if and only if $G$ contains an isolated edge; otherwise, it is linear. Certain tournaments together with an isolated edge give further infinite examples of bounded and linear saturation functions. Two additional orders of magnitude are $\Theta(n \sqrt{\log n})$ and $\Theta(n \log n)$. For these two, the lower bounds are obtained by adapting a recently developed approach for the saturation problem in edge-ordered graphs, while the matching upper bounds are constructive.
    
    Finally, we study the semisaturation problem, in which we require only that adding any missing edge to $H$ creates a copy of $G$. We completely characterize the possible orders of magnitude: the semisaturation function is either bounded, linear, or $\Theta(n \log n)$. 
\end{abstract}

\section{Introduction}

An \emph{oriented graph} is a pair $G = (V, E)$ where $V$ is the vertex set, and $E$ is a set of oriented edges such that if $\orarrow{uv} \in E$, then $\orarrow{vu} \notin E$. We use the notation $\orarrow{uv}$ to describe an edge that goes from $u \in V$ to $v \in V$.

\emph{Extremal problems} are classical problems in combinatorics that deal with the maximal density that a combinatorial structure can have while avoiding some forbidden substructure. Such problems have a long history, and graphs are among the most studied objects in this area. \emph{Saturation problems}, on the other hand, deal with the minimal density that a combinatorial structure can have so that it avoids the forbidden substructure, but extending it in any way introduces the forbidden substructure. Saturation problems have been studied for many combinatorial structures, including graphs, hypergraphs, set systems, and $0$-$1$ matrices. More recently, we have studied saturation problems for vertex-ordered~\cite{boskesz023} and edge-ordered~\cite{boskesz026} graphs, which are closely related to the present work. In this paper, our goal is to systematically study the saturation problems for oriented graphs.

One of the fundamental questions about saturation functions is to determine the range of possible orders of magnitude. Compared to the classical extremal functions, they often appear to have simpler behaviour. In particular, for several of the settings mentioned above, it was shown that the saturation functions can either be bounded or linear (as a function of the size of the base structure, e.g., the number of vertices of a graph). For oriented graphs, however, that is no longer the case. We show that, in general, at least four distinct orders of magnitude can occur.

We first define the saturation and semisaturation functions. Throughout the paper, a \emph{copy} of $G$ in $H$ means a subgraph of $H$ that is isomorphic to $G$.

\begin{defi} [Saturation] \label{sat_func}
    Let $G$ be an oriented graph. We denote by $\mathrm{sat}_{\shortto}(n,G)$ the minimum number of edges in an oriented graph $H$ on $n$ vertices that does not contain a copy of $G$, but adding any new edge to $H$ introduces a copy of $G$.
\end{defi}

\begin{defi} [Semisaturation] \label{ssat_func}
    Let $G$ be an oriented graph. We denote by $\mathrm{ssat}_{\shortto}(n,G)$ the minimum number of edges in an oriented graph $H$ on $n$ vertices such that adding any new edge to $H$ introduces a new copy of $G$.
\end{defi}

The following observation relates the saturation and semisaturation functions.

\begin{obs} \label{ssat < sat}
    For every oriented graph $G$, we have
    \[\mathrm{ssat}_{\shortto}(n,G) \leq \mathrm{sat}_{\shortto}(n,G).\]
\end{obs}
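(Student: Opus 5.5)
The plan is to show that every graph admissible in Definition~\ref{sat_func} is also admissible in Definition~\ref{ssat_func}. Then the minimum defining $\mathrm{ssat}_{\shortto}(n,G)$ is taken over a superset of the graphs used for $\mathrm{sat}_{\shortto}(n,G)$, so it can only be smaller or equal.

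Concretely, fix $n$ and let $H$ be an oriented graph on $n$ vertices that attains $\mathrm{sat}_{\shortto}(n,G)$. So $H$ contains no copy of $G$, and adding any new edge to $H$ creates a copy of $G$. Let $e$ be any edge that may be added, that is, an oriented edge $\orarrow{uv}$ such that neither $\orarrow{uv}$ nor $\orarrow{vu}$ is in $H$, so that $H+e$ is still an oriented graph. By saturation, $H+e$ contains a copy $G'$ of $G$. This copy must use $e$; otherwise $G'$ would be a subgraph of $H$, contradicting that $H$ is $G$-free. Hence $G'$ is a \emph{new} copy of $G$, one not present in $H$. Since $e$ was arbitrary, $H$ satisfies the semisaturation condition, and therefore $\mathrm{ssat}_{\shortto}(n,G)\le |E(H)| = \mathrm{sat}_{\shortto}(n,G)$.

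There is essentially no obstacle; the only point to handle carefully is well-definedness. If no $G$-saturated graph on $n$ vertices exists, we read $\mathrm{sat}_{\shortto}(n,G)$ as $+\infty$, and the inequality holds trivially. (The maximal $G$-free oriented graph obtained by greedily adding edges shows that saturated graphs do exist whenever $G$ has at least one edge.) We also need that ``adding a new edge'' means the same thing in both definitions, namely an edge whose endpoints are non-adjacent in $H$, so that the result remains an oriented graph. With that reading the argument above goes through verbatim.
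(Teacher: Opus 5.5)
Your proposal is correct and gives the intended argument; the paper states this observation without proof because it follows at once from the definitions. Since a $G$-saturated graph is $G$-free, the copy of $G$ created by any added edge must use that edge and is therefore new, so every saturating graph is semisaturating. Your remarks on reading $\mathrm{sat}_{\shortto}(n,G)=+\infty$ when no saturated graph exists, and on ``new edge'' meaning the same in both definitions, are accurate but inessential.
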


Throughout the paper $``+"$ denotes the disjoint union of two graphs. Slightly abusing this notation, $G+v$ describes the graph obtained from $G$ by adding an isolated vertex $v$, while $G+e$ denotes the graph obtained by taking $G$ and two additional vertices with an oriented edge $e$ connecting them. Further, for $v \in V(G)$, $G-v$ denotes the subgraph of $G$ induced by the vertex set $V(G)\setminus \{v\}$.

The following result allows us to focus on the saturation and semisaturation functions of oriented graphs without isolated vertices for the remainder of the paper.

\begin{lem}
    Let $G$ be an oriented graph, and $v$ be an isolated vertex. If $n > |V(G)|$, then 
    \[\mathrm{sat}_{\shortto}(n, G) = \mathrm{sat}_{\shortto}(n, G + v) \text{ and } \mathrm{ssat}_{\shortto}(n, G) = \mathrm{ssat}_{\shortto}(n, G + v).\]
\end{lem}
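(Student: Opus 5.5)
The key observation is that when the host graph has $n > |V(G)|$ vertices, containing a copy of $G$ and containing a copy of $G+v$ are the same property. If $H$ contains a copy of $G+v$, then deleting the image of $v$ leaves a copy of $G$. Conversely, if $H$ contains a copy of $G$, its image uses $|V(G)| < n$ vertices. So some vertex of $H$ lies outside the copy, and mapping $v$ to it gives a copy of $G+v$. This works because $v$ is isolated, so no edges are needed at its image, and subgraph copies need not be induced. Hence for every oriented graph $H$ on $n$ vertices, $H$ is $G$-free if and only if it is $(G+v)$-free.

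For saturation, I will apply this equivalence to $H$ and to every graph $H+\orarrow{xy}$ obtained by adding a missing edge. All of these graphs have $n$ vertices. Therefore $H$ is $G$-saturated exactly when it is $(G+v)$-saturated. The two families of admissible graphs coincide, so their minimum edge counts agree, giving $\mathrm{sat}_{\shortto}(n,G)=\mathrm{sat}_{\shortto}(n,G+v)$.

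Semisaturation requires a \emph{new} copy, meaning a copy that uses the added edge $e=\orarrow{xy}$, and this is the one step needing care. I will refine the correspondence to copies containing $e$.
\begin{itemize}
\item If $H+e$ has a copy of $G+v$ using $e$, then $e$ is not incident to the image of $v$, since $v$ is isolated. So $e$ lies in the corresponding copy of $G$, which is therefore new.
\item If $H+e$ has a copy of $G$ using $e$, I extend it by any vertex outside its image, which exists since $n>|V(G)|$. The resulting copy of $G+v$ still uses $e$.
\end{itemize}
Hence $H$ is $G$-semisaturated if and only if it is $(G+v)$-semisaturated, and the minima agree.
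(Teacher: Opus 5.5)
Your proof is correct and follows the paper's approach: since $n>|V(G)|$, a copy of $G$ can be extended by an unused vertex to a copy of $G+v$, and deleting the image of $v$ recovers a copy of $G$. You phrase this as a single equivalence applied to $H$ and to every $H+\orarrow{xy}$, which spells out the paper's ``repeat the same argument for the opposite direction'', and you correctly check that the added edge stays in the copy in both directions for semisaturation.
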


\begin{proof}
    Let $n > |V(G)|$ and let $H_n$ be a graph that has $n$ vertices and $\mathrm{sat}_{\shortto}(n, G)$ edges, such that it avoids $G$, and adding any new edge to $H_n$ introduces a copy of $G$. If $H_n$ avoids $G$, then it also avoids $G + v$. Moreover, if we add an edge $u_1u_2$ to $H_n$, denote by $G'$ the copy of $G$. Then there exists $u \in V(H_n) \setminus V(G')$ such that $G' + u$ is a copy of $G + v$.
    
    This proves $\mathrm{sat}_{\shortto}(n, G) \geq \mathrm{sat}_{\shortto}(n, G + v)$ and $\mathrm{ssat}_{\shortto}(n, G) \geq \mathrm{ssat}_{\shortto}(n, G + v)$. We can repeat the same argument for the opposite direction.
\end{proof}

For oriented graphs with an isolated edge, we show a dichotomy for the saturation functions. However, in this case, we cannot characterize the respective families. The following proof is a standard argument, as it is very similar to the respective proofs about graphs, vertex ordered graphs, etc.

\begin{lem} \label{dichotomy_isolated_edge}
    For every oriented graph $G$ that has an isolated edge, we have
    \[\mathrm{sat}_{\shortto}(n, G) = O(1) \text{ or } \Theta(n).\]
    If $G$ does not contain isolated edges, then 
    \[\mathrm{sat}_{\shortto}(n, G) \geq \mathrm{ssat}_{\shortto}(n, G) = \Omega(n).\]
\end{lem}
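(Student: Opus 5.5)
We treat the two parts separately. Both rest on the standard argument from the undirected and vertex-ordered settings, adapted to orientations.

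\emph{Part 1: an isolated edge gives $O(1)$ or $\Theta(n)$.} The upper bound $O(n)$ holds for every $G$ with an isolated edge $e$. Let $G' = G - e$, i.e., $G$ with the two endpoints of $e$ deleted. Then $G'$ has $|V(G)|-2$ vertices. Take a fixed $G$-saturated graph on a bounded number of vertices, or more simply argue as follows. Start from the empty graph on $n$ vertices and greedily add edges, in any orientation, as long as no copy of $G$ is created. The result $H$ is $G$-saturated, so we must bound its number of edges.

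The key observation is that $H$ cannot contain both a copy of $G'$ and a matching edge disjoint from it. So either $H$ is $G'$-free, or every edge of $H$ meets a fixed copy of $G'$. A greedy maximal graph can be dense in general, so the cleaner route is an explicit construction. Take $K$, a largest set on which a transitive tournament is $G$-free with $|K|\ge |V(G)|-1$. Alternatively, let $H$ be a star-like structure: a set $S$ of $|V(G)|-2$ vertices, saturated internally, joined to everything. This gives $O(n)$ edges, and adding any edge outside $S$ creates the isolated edge needed together with a copy of $G'$ inside $S\cup$ neighbours.

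I expect this upper-bound construction to be the fiddly step. I would handle it by taking any maximal $G$-free graph $H$ that contains a copy $C$ of $G'$ on which everything depends. Every edge of $H$ outside $V(C)$ would, together with $C$, give $G$, so all edges of $H$ meet $V(C)$. Hence $|E(H)| \le |V(C)|\, n = O(n)$. If $H$ instead contains no copy of $G'$, we get $\mathrm{sat}$ bounded or handle it by the dichotomy below.

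For the dichotomy proper, suppose $\mathrm{sat}_{\shortto}(n_0,G) = m$ is witnessed by $H_{n_0}$ for some $n_0$ with $n_0 > 2m + |V(G)|$. Then $H_{n_0}$ has more than $|V(G)|$ isolated vertices. Adding isolated vertices to $H_{n_0}$ preserves saturation. It stays $G$-free because no new edges appear. For a new missing edge $uv$ touching a new isolated vertex, the same $G$-copy that arises when we add an edge between two old isolated vertices transfers by symmetry, since all isolated vertices are equivalent. Thus $\mathrm{sat}_{\shortto}(n,G)\le m$ for all $n\ge n_0$. Otherwise $\mathrm{sat}_{\shortto}(n,G) \ge (n-|V(G)|)/2$ for every $n$, which is linear. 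Together with the $O(n)$ upper bound this gives the dichotomy.

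\emph{Part 2: no isolated edge gives $\Omega(n)$.} Let $H$ be semisaturated. Call a vertex isolated in $H$ if it has degree $0$. Suppose two isolated vertices $u,v$ exist. Adding $\orarrow{uv}$ must create a new copy of $G$ using $\orarrow{uv}$. In that copy, the edge $\orarrow{uv}$ has both endpoints of degree $1$, so it is an isolated edge of $G$, a contradiction. Hence at most one vertex has degree $0$, so $\sum \deg \ge n-1$ and $|E(H)|\ge (n-1)/2$. Combined with Observation~\ref{ssat < sat}, this proves the second statement.
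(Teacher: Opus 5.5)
Your proof is correct and follows essentially the same route as the paper. You take a copy $C$ of $G'$ plus isolated vertices and extend it greedily, so every edge meets $V(C)$ and there are $O(n)$ edges; you pad a sublinear host with isolated vertices to get the dichotomy; and you observe that a semisaturated host for $G$ without isolated edges has at most one isolated vertex. In the write-up, drop the false starts: the ``star-like'' set $S$ joined to everything is not even $G$-free (for example, when $G$ is two disjoint edges). Also state explicitly that the greedy extension starts from $C$ together with $n-|V(G)|+2$ isolated vertices, which is $G$-free because it has fewer edges than $G$, so the case where $H$ contains no copy of $G'$ never needs to be handled.
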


\begin{proof}
    We want to prove that for any $G$ we have
    $\mathrm{sat}_{\shortto}(n, G + e) = O(1) \text{ or } \Theta(n).$ Let $k=|V(G)|.$
    To prove a linear upper bound, it suffices to take as a host graph $G$ and $n-k$ isolated vertices. Adding an edge between two isolated vertices introduces a copy of $G+e$. Then we keep adding edges in the rest of the graph without introducing a copy of $G + e$ until we can. At the end, in the worst case, the number of edges in the host graph is equal to $\binom{n}{2} - \binom{n-k}{2} \leq kn$.
    
    If we assume that the saturation function of $G+e$ is sublinear, then for a large enough $n_0$, there is a host graph on $n_0$ vertices with at least two isolated vertices. These two isolated vertices can be replaced by any number of isolated vertices while preserving the saturation property. Therefore, if the saturation function is not linear, it must be bounded.

    Suppose now that $G$ has no isolated edges. Then, adding an isolated edge to the host graph $H$, no new copy of $G$ is created. This shows that the semisaturation function of $G$ is at least $\floor{n/2}$.
\end{proof}

\subsection{Related work}

The graph saturation problem was initiated by Erd\H os, Hajnal, and Moon~\cite{ErdosHajnalMoon64}. Since then, it has branched in many different directions. The survey~\cite{SatSurvey} summarizes progress and diverse aspects of saturation theory. 

In this paper, we are interested in the order of magnitude of saturation functions. It appears that these functions follow a certain dichotomy for several variants of the graph saturation problem.

\begin{thm} [Dichotomy] 
    For every unordered graph~\cite{KaszonyiTuza86}, ordered bipartite graph~\cite{FulekKeszegh21}, and vertex-ordered graph~\cite{boskesz023}, the saturation function is either bounded or linear.
\end{thm}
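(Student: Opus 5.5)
This theorem collects three results proved in the cited works~\cite{KaszonyiTuza86,FulekKeszegh21,boskesz023}, so I will not reprove them here. The plan below describes the common template behind them, with the unordered case as the model. Throughout, $F$ is the forbidden graph, $k=|V(F)|$, and isolated vertices of $F$ are assumed to have been removed, as in the lemma above. There are two halves: a linear upper bound that holds for every $F$, and a gap argument showing that a sublinear saturation function must in fact be bounded.

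For the upper bound, I would build a host graph with a bounded "core" $S$ whose complement $I=V\setminus S$ is independent. The core and its edges to $I$ should be chosen so that the following holds:
\begin{itemize}
\item adding any edge $uv$ with $u,v\in I$ creates a copy of $F$;
\item the graph itself contains no copy of $F$.
\end{itemize}
Given such a graph, I add edges greedily as long as no copy of $F$ appears. Every edge added this way has an endpoint in $S$, because edges inside $I$ are already forbidden. So the final saturated graph has at most $\binom{|S|}{2}+|S|\,n=O(n)$ edges.

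To find $S$, fix an edge $xy$ of $F$ and put $F'=F-x-y$. Start with $S=V(F')$, carrying the edges of $F'$, and join every vertex of $I$ to the set $N_F(x)\cup N_F(y)\setminus\{x,y\}$. Then any added edge $uv$ inside $I$ closes a copy of $F$, with $u$ playing $x$ and $v$ playing $y$. The danger is that this starting graph already contains $F$. To handle this, I would take, among all bounded configurations (a core together with the attachment sets of $I$-vertices to it) that force $F$ whenever an $I$–$I$ edge is added, one that is minimal. Minimality is what should rule out a copy of $F$ in the starting graph.

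\textbf{Main obstacle.} Making this minimality step rigorous is the delicate part, and I expect it to be the hardest step. In the vertex-ordered and bipartite-ordered settings it must additionally respect the order, for example by placing $S$ at the two ends of the vertex order and choosing the attachment pattern of $I$ compatibly. This is where the cited papers do their real work.

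For the gap argument there are two cases.
\begin{itemize}
\item \emph{$F$ has no isolated edge.} If the host graph had two isolated vertices $u,v$, then adding $uv$ would create a copy of $F$ in which $uv$ is an isolated edge, which is impossible. So at most one vertex is isolated, and the host has at least $(n-1)/2$ edges, which is linear.
\item \emph{$F=F_0+e$.} If $\mathrm{sat}(n,F)=o(n)$, then for large $n_0$ an extremal host on $n_0$ vertices has at least two isolated vertices. Replicating an isolated vertex preserves both $F$-freeness and saturation: a new missing edge at a clone behaves exactly like one at the original, and no new copy of $F$ can use the clones in an essential way. This gives a bounded bound for all $n\ge n_0$.
\end{itemize}
In the ordered settings the clones must be inserted at the position of the original isolated vertex, so consecutive isolated vertices are needed. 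This again follows by pigeonhole once the number of edges is $o(n)$.
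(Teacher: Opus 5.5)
The paper gives no proof of this theorem; it only cites the three sources. So deferring to them is consistent with the paper. Your second half is also correct: a host has at most one isolated vertex when $F$ has no isolated edge, and when $F=F_0+e$ you can clone a pair of isolated vertices (consecutive ones in the ordered settings). This is exactly the argument of Lemma~\ref{dichotomy_isolated_edge}.

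The genuine gap is the universal linear upper bound. Your template asks for an $F$-free graph with a bounded core $S$ and independent $I=V\setminus S$ in which every $I$--$I$ non-edge forces $F$. Such a graph does not exist in general, so no minimality argument can produce it. Take $F=C_4$. Adding $ab$ with $a,b\in I$ creates a $C_4$ only via a path $a,p,q,b$ with $p\in N(a)\subseteq S$, $q\in N(b)\subseteq S$ and $p\neq q$. Once $|I|>2^{|S|}$, two vertices $a,b\in I$ have the same neighbourhood $N\subseteq S$. $C_4$-freeness forces $|N|\le 1$, and then no such path exists. Your explicit starting graph already fails here: $F-x-y$ is an edge $zw$, and $z,a,w,b$ is a $4$-cycle. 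The template does work when $F=F_0+e$ (take the core to be $F_0$, as in the paper); it breaks down for $F$ without an isolated edge, which is exactly where the linear upper bound is needed.

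The missing idea, due to K\'aszonyi and Tuza, is to drop the requirement that $I$--$I$ edges be forced outright. Instead, show that a greedy completion adds only boundedly many edges at each vertex of the independent part. Let $u=|V(F)|-\alpha(F)-1$. Fix a maximum independent set $J$ of $F$ and a vertex $x\notin J$; by maximality of $J$, $x$ has some $d\ge 1$ neighbours in $J$.

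\begin{enumerate}
\item The join $K_u\vee\overline{K_{n-u}}$ is $F$-free, since a copy of $F$ would put at least $\alpha(F)+1$ of its vertices into the independent part.
\item The graph $K_u\vee(K_{1,d}\cup\overline{K_m})$ contains $F$: send $V(F)\setminus(J\cup\{x\})$ to the clique, $x$ to the centre, its $d$ neighbours in $J$ to the leaves, and the rest of $J$ to isolated vertices.
\end{enumerate}

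Hence, if you greedily saturate $K_u\vee\overline{K_{n-u}}$, every vertex of the independent part keeps fewer than $d$ neighbours inside it. The resulting $F$-saturated graph therefore has at most $\binom{u}{2}+u(n-u)+(d-1)(n-u)/2=O(n)$ edges. The paper's Lemma~\ref{acyc_linear} relies on the same mechanism, bounding the degrees created by the greedy completion rather than forcing every non-edge at once.
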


Ordered bipartite graphs in the context of saturation problems can be treated equivalently as $0$-$1$ matrices. They have been studied recently on several occasions \cite{Berendsohn23, BDGG25, BrualdiCao21, FulekKeszegh21, Geneson21}. Even though the full characterization of the class of matrices with bounded saturation functions remains open, for permutation matrices, they have been characterized by Berendsohn~\cite{Berendsohn23}. A characterization of vertex-ordered perfect matchings~\cite{boskesz023} (a natural analogue of permutation matrices) is still unknown.

The poset saturation problem initiated in~\cite{PosetSat21} has also attracted a lot of interest. Here, the base structure is a family of sets on $n$ vertices, and some fixed poset is forbidden in their containment poset. It is interesting to note that the analogous dichotomy has remained an open problem. Another framework in which the dichotomy is conjectured but not known is the saturation of sequences~\cite{SeqSat25}.  

In the edge-ordered setting~\cite{boskesz026}, the picture changes. The saturation function can have at least three different orders of magnitude: bounded, linear, and an undetermined function lying between $\Theta(n\sqrt{\log n})$ and $O(n\log n/{\log \log n})$. 

\smallskip
The order of magnitude of saturation functions for oriented graphs has not been studied systematically so far. However, several related variants of this problem have appeared in the literature. An early result in this direction is due to Tuza~\cite{Tuza88}, who showed that the saturation functions of digraphs, in which both orientations of an edge are allowed, are at most linear. 

A different variant was studied by Pikhurko~\cite{Pikhurko99}, where a forbidden oriented graph is acyclic, but adding an edge to the host graph either creates a copy of the forbidden graph or produces an oriented cycle. In this setting, he was able to show that the saturation function is also at most linear.

F\"uredi et al.~\cite{Furedi98} showed in a different context that an oriented triangle has a $\Theta(n \log n)$ saturation function. On the other hand, Jacobsen and Tennenhouse~\cite{JacTenn12} showed that for any oriented graph $G$, the saturation function is strictly smaller than $\binom{n}{2}$ for large enough $n$.

\subsection{Main results}

We start with some definitions.
In an oriented graph $G$, the \emph{neighborhood} $N_G(v)$ of a vertex $v$ is the set of vertices of $G$ that are connected to $v$ by an edge, regardless of its orientation.

\begin{defi}
    We say that an edge $\orarrow{uv} \in E$ has a \emph{compatible neighborhood} if for every vertex $w \in N_G(u) \cap N_G(v)$, either $\{\orarrow{wv}, \orarrow{wu}\} \subset E$ or $\{\orarrow{vw},\orarrow{uw}\} \subset E$. See Figure \ref{fig:compatible}.
\end{defi}

\begin{figure}[ht]
	\centering
	\includegraphics[width=0.5\textwidth]{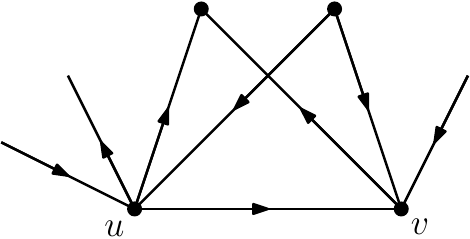}    
	\caption{\label{comp_nbhd} An edge with a compatible neighborhood.}    
    \label{fig:compatible}
\end{figure}

Our first result gives a complete characterization of oriented graphs according to their semisaturation functions.

\begin{thm} \label{thm:ssat}
    For a given oriented graph $G$, we have 
    \[  \mathrm{ssat}_{\shortto}(n,G) = 
    \begin{cases}
    O(1) & \text{if $G$ has an isolated edge,} \\
    \Theta(n) &
    \begin{aligned}
        &\text{if $G$ has no isolated edge and has an edge} \\
        &\text{with a compatible neighborhood,}
    \end{aligned} \\
    \Theta(n \log n) & \text{otherwise.}
    \end{cases}
    \]
\end{thm}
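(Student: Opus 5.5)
The plan is to treat the three regimes separately. The bounded case and the linear case come from explicit constructions plus Lemma~\ref{dichotomy_isolated_edge}. For the $\Theta(n\log n)$ case I would use a ``binary code'' construction for the upper bound and Hansel's lemma for the lower bound. Throughout, copies are not required to be induced, so extra edges in the host never hurt.

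\emph{Isolated edge.} Write $G=G'+e$. I would take $H$ to be three vertex-disjoint copies of $G'$ plus $n-3|V(G')|$ isolated vertices, which has $3|E(G')|=O(1)$ edges. Any added edge $xy$ has its two endpoints in at most two of the copies, so some copy of $G'$ avoids $\{x,y\}$. Together with the new edge $xy$ playing the role of $e$, this copy gives a new copy of $G$.

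\emph{No isolated edge.} Here Lemma~\ref{dichotomy_isolated_edge} already gives $\Omega(n)$, so the lower bound is done in this regime.

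\emph{No isolated edge, but a compatible edge $\orarrow{uv}$.} For the upper bound, let $C$ be a copy of $G-\{u,v\}$, completed to a tournament on $C$. Join every outside vertex $x$ to every vertex of $C$, orienting the edges as follows:
\begin{itemize}
\item if $w\in N_G(u)$, the edge $xw$ is oriented as $uw$ is in $G$;
\item if $w\in N_G(v)\setminus N_G(u)$, it is oriented as $vw$ is;
\item otherwise it is oriented arbitrarily.
\end{itemize}
Compatibility of $\orarrow{uv}$ says exactly that $u$ and $v$ see every common neighbour with the same orientation, so each outside vertex can play the role of $u$ and also the role of $v$. The only missing pairs are pairs of outside vertices. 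Adding $\orarrow{xy}$ therefore creates a copy of $G$ with $x\mapsto u$ and $y\mapsto v$. The host has $O(n)$ edges.

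\emph{Remaining case: upper bound $O(n\log n)$.} Fix any edge $\orarrow{uv}$ of $G$. Give the $n-O(\log n)$ outside vertices distinct binary codes of length $\ell=\lceil\log_2 n\rceil$. For each coordinate $i\le\ell$ and each $\epsilon\in\{0,1\}$, add a gadget $C_{i,\epsilon}$, which is a copy of $G-\{u,v\}$. An outside vertex whose $i$-th bit equals $\epsilon$ is joined to $C_{i,\epsilon}$ following the pattern of $u$. A vertex whose $i$-th bit is $1-\epsilon$ is joined following the pattern of $v$. All remaining pairs between gadget vertices and outside vertices, and all pairs inside the union of the gadgets, are filled in with edges. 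No conflict arises, because each outside vertex uses only one of the two patterns on any given gadget.

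Any two outside vertices $x,y$ differ in some bit $i$. Say $x$ has bit $i$ equal to $0$. Then adding $\orarrow{xy}$ is handled by $C_{i,0}$ (with $x\mapsto u$), and adding $\orarrow{yx}$ is handled by $C_{i,1}$ (with $y\mapsto u$). The host has $O(n\log n)$ edges.

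\emph{Remaining case: lower bound.} I expect this to be the main step. Suppose $H$ is semisaturated and $x,y$ are nonadjacent. Adding $\orarrow{xy}$ creates a copy of $G$ in which $\orarrow{xy}$ plays some edge $\orarrow{ab}$. Since $\orarrow{ab}$ has no compatible neighbourhood, there is a common neighbour $c$ of $a$ and $b$ whose two edges to $a$ and $b$ have opposite orientations relative to $c$. The image $w$ of $c$ is then a common neighbour of $x$ and $y$ in $H$ with one of $x,y$ in $N^-(w)$ and the other in $N^+(w)$.

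Hence the pairs of $K_n$ are covered by two families of complete bipartite graphs:
\begin{itemize}
\item the bicliques $N^-(w)\times N^+(w)$, one for each vertex $w$, which cover all nonadjacent pairs;
\item the single edges of $H$, viewed as bicliques of size $2$, which cover all adjacent pairs.
\end{itemize}
By Hansel's lemma, the total number of vertices in these bicliques, counted with multiplicity, is at least $n\log_2 n$. That total is $\sum_w \deg(w)+2|E(H)|=4|E(H)|$. Therefore $|E(H)|\ge \tfrac14 n\log_2 n$.

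The one point to double-check is the extraction of the distinguishing neighbour $w$. If instead $H$ contains the copy with the reverse orientation, playing $\orarrow{ba}$, the same argument applies with the roles of $a$ and $b$ swapped, so this causes no problem.
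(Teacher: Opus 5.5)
Your proof is correct, and for the bounded regime, the linear regime and the $O(n\log n)$ upper bound it uses essentially the paper's constructions. These are three copies of $G-\{u,v\}$; a single gadget on which every outside vertex can play both $u$ and $v$; and $2\lceil\log_2 n\rceil$ gadgets indexed by bit positions, which is the paper's family of separating bipartitions with two role-swapped gadgets each. You are a bit more careful than the paper in explicitly filling in all pairs touching the gadgets. This is harmless, since semisaturation is preserved under adding edges and these extra edges number only $O(n)$, respectively $O(n\log n)$.

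The genuine difference is the $\Omega(n\log n)$ lower bound.

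\emph{The paper's route.} It covers only the non-edges of $H$ by the bipartite graphs $\pi^{-1}(t)$, under the contrary assumption $|E(H)|<cn\log n$. It applies Claim~\ref{claim:bipdense}, a Katona--Szemer\'edi type statement for graphs missing at most $cn^{2-\epsilon}$ edges, to the complement of $H$. This yields one vertex of degree $\Omega(\log n)$, which is deleted, and the argument is iterated $n/2$ times.

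\emph{Your route.} You turn the covering of the non-edges into a covering of all of $K_n$ by adding each edge of $H$ as a $K_{1,1}$, and then apply Hansel's capacity bound once. Since the biclique $N^-(w)\times N^+(w)$ has exactly $\deg(w)$ vertices, the total capacity is $4|E(H)|$, which gives $|E(H)|\ge \frac14 n\log_2 n$ directly.

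\emph{Comparison.} Your argument is shorter and gives an explicit constant. It needs no a priori sparsity assumption and no iterated deletion, so it avoids the small bookkeeping issues of the deletion step. It also yields Corollary~\ref{cor:subsat} immediately: intersect each biclique with $V_2$ and use only the edges of $H[V_2]$ as trivial bicliques; the capacity is still at most $4|E(H)|$. The paper's route mainly buys uniformity with the machinery imported from the edge-ordered setting.
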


Next, we show that the saturation of acyclic oriented graphs can be characterized by the respective families. In particular, they behave in the same way as in the case of unoriented graphs:

\begin{thm} \label{thm:acyclic}
    For every acyclic oriented graph $G$, we have 
 \[  \mathrm{sat}_{\shortto}(n,G) = 
    \begin{cases}
         O(1) & \text{if $G$ has an isolated edge,} \\
         \Theta(n) & \text{otherwise.} \\         
    \end{cases}
    \]
\end{thm}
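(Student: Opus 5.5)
The plan is to treat the two cases separately. In the case without isolated edges, the lower bound is already available, so the work is an upper bound and a construction. In the case with an isolated edge, Lemma~\ref{dichotomy_isolated_edge} reduces boundedness to a single construction.

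\textbf{Lower bound.} If $G$ has no isolated edge, Lemma~\ref{dichotomy_isolated_edge} gives $\mathrm{sat}_{\shortto}(n,G)\ge \mathrm{ssat}_{\shortto}(n,G)=\Omega(n)$. So in this case only the upper bound $O(n)$ is needed.

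\textbf{Bounded case.} Write $G=G_0+e$ with $G_0$ acyclic and $|V(G_0)|=m$. I would take as host $H$ a transitive tournament $T$ on $m+1$ vertices together with $n-m-1$ isolated vertices.
\begin{itemize}
\item $H$ is $G$-free: its non-isolated part has only $m+1<m+2=|V(G)|$ vertices, and every vertex of $G$ is non-isolated in $G$ (by the earlier lemma we may assume $G$ has no isolated vertices).
\item Every acyclic oriented graph on $m$ vertices embeds in the transitive tournament on $m$ vertices: take a topological order.
\item Adding an edge between two isolated vertices gives a copy of $G$: the new edge plays $e$, and $G_0$ embeds into $T$.
\item Adding an edge between some $t\in T$ and an isolated vertex $u$ also gives a copy of $G$: the new edge $tu$ plays $e$, and $G_0$ embeds into $T-t$, which is a transitive tournament on $m$ vertices.
\end{itemize}
Edges inside $T$ are all present already, so $H$ is $G$-saturated with $\binom{m+1}{2}=O(1)$ edges.

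\textbf{Linear upper bound when there is no isolated edge.} I would build a $G$-free graph $H_0$ with a constant-size set $A$ such that $V\setminus A=:B$ is independent and adding any edge inside $B$, in either orientation, creates $G$. Then I would greedily add edges while staying $G$-free until $H_0$ is saturated. Every added edge meets $A$, so the final graph has at most $|A|\,n+\binom{|A|}{2}=O(n)$ edges.

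To construct $H_0$, order $G$ topologically. Pick a vertex $x$ that is not a sink, and let $y$ be its first out-neighbour in the order. Then no vertex $w$ satisfies $x\to w\to y$, since such a $w$ would precede $y$. A pattern $w\to x$, $y\to w$ would form a cycle, which is impossible. Hence every common neighbour $w$ is oriented the same way towards $x$ and $y$, so $\orarrow{xy}$ has a compatible neighborhood.

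Let $A$ be a copy of $G-\{x,y\}$. Join every $b\in B$ to $A$ according to the union of the neighbourhoods of $x$ and $y$; compatibility makes these orientations consistent. Adding $\orarrow{bb'}$ then yields $G$, with $b$ playing $x$ and $b'$ playing $y$. By symmetry the reverse orientation $\orarrow{b'b}$ also yields $G$.

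\textbf{Main obstacle.} The hard step is ensuring that $H_0$ itself is $G$-free. A copy of $G$ in $H_0$ would use an independent set of vertices from $B$, all sharing the neighbourhood pattern $N(x)\cup N(y)$ into $A$. I would first try to exclude this by a counting or extremal choice. For instance, choose $x$ as the last non-sink in the topological order, or choose the pair $(x,y)$ minimizing $|N(x)\cup N(y)|$. Then argue that in any embedding the vertices of $A$ cannot absorb the $k-2$ roles outside the $B$-part, because that part of $G$ would need an edge inside $B$.

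If this fails, the fallback is to let $A$ be a larger constant-size graph. Take a maximal $G$-free extension of $G-\{x,y\}$ inside a transitive tournament, and attach the $B$-vertices only to carefully chosen positions. This keeps $H_0$ $G$-free while preserving the property that every $B$–$B$ edge completes a copy of $G$.
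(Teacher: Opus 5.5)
Your lower bound and your bounded case (transitive tournament on $m+1$ vertices plus isolated vertices) are exactly the paper's argument (Lemma~\ref{dichotomy_isolated_edge} and Claim~\ref{claim:e+acyclic}), and they are fine. The gap is in the linear upper bound, which rests entirely on $H_0$ being $G$-free, and that is the step you leave open. You call it the main obstacle and offer candidate choices of $\orarrow{xy}$ (the last non-sink, or minimal $|N(x)\cup N(y)|$) and a vague fallback, but you prove nothing for any of them. The closing heuristic is also not an argument: a copy of $G$ may use any number $t\ge 2$ of vertices of $B$ and only $k-t$ vertices of $A$.

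This is not a formality. Your $H_0$ is the contraction of $G$ along $\orarrow{xy}$, with the merged vertex blown up into an independent set. In the unoriented analogue this graph can contain $G$: for $C_4$, contracting any edge gives a triangle, and blowing up one of its vertices gives $K_{1,1,t}\supseteq C_4$. So any proof must use orientation and acyclicity essentially. For instance, if $x$ has out-degree $1$ and $y$ is a sink, the blown-up vertices are sinks, so $H_0$ has fewer non-sinks than $G$. However, neither such an edge nor its mirror image need exist, e.g.\ in the orientation of $C_4$ with two sources and two sinks. As written, the $O(n)$ bound is not established.

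The paper avoids this question by building the starting graph greedily, so that $G$-freeness is automatic, and moving the work to an edge count. Edges $\orarrow{v_iv_j}$ with $i<j$ are added in lexicographic order until the first edge $\orarrow{v_xv_{x+y+1}}$ that would create $G$.
\begin{itemize}
\item Acyclicity gives $x\le k-1$, since otherwise $v_1,\dots,v_k$ span a transitive tournament.
\item A twin-swapping argument among $v_{x+1},\dots,v_{x+y+1}$ gives $y<k$.
\item In any $G$-saturated extension $H$, every $v_i$ with $i>x$ has out-degree at most $y$. Otherwise $v_i$, which receives edges from all of $v_1,\dots,v_{x-1}$, could replace $v_x$ in the copy of $G$ created by $\orarrow{v_xv_{x+y+1}}$.
\end{itemize}
Hence $H$ has at most $2(k-1)n$ edges. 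This never requires the pairs among late vertices to be saturated in advance, which is the requirement that pushed you into the hard $G$-freeness question.
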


Our most interesting contribution is that the landscape is substantially richer when the graph is not acyclic. More precisely, we show the following statement.

\begin{thm} \label{thm:orders}
    There are at least four distinct orders of magnitude for the saturation functions of oriented graphs, namely
    \[O(1),\Theta(n), \Theta(n\sqrt{\log n}), \text{ and }  \Theta(n\log n).\]
\end{thm}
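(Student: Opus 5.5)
The first two orders come from Theorem~\ref{thm:acyclic}, and the plan is to exhibit one explicit oriented graph for each of the two superlinear orders. By Theorem~\ref{thm:acyclic}, any acyclic $G$ with an isolated edge, e.g.\ a single edge, has $\mathrm{sat}_{\shortto}(n,G)=O(1)$. Any acyclic $G$ without an isolated edge, e.g.\ a directed path on three vertices, has $\mathrm{sat}_{\shortto}(n,G)=\Theta(n)$.

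For $\Theta(n\log n)$ I would take $G$ to be the cyclically oriented triangle $\vec C_3$. The lower bound comes directly from Theorem~\ref{thm:ssat} and Observation~\ref{ssat < sat}. Indeed, $\vec C_3$ has no isolated edge, and no edge has a compatible neighborhood: for the edge $\orarrow{uv}$ with third vertex $w$ we have $\orarrow{vw},\orarrow{wu}$, which is neither pattern. Hence $\mathrm{sat}_{\shortto}(n,\vec C_3)\ge \mathrm{ssat}_{\shortto}(n,\vec C_3)=\Omega(n\log n)$. For the upper bound I would use a recursive construction, following F\"uredi et al. Split the vertices into two halves $A,B$, orient all edges from $A$ to $B$, and recurse inside each half. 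The result is a transitive tournament, which is far too dense, so the construction must be sparsified. The idea is to give each vertex a binary label of length $\log n$ and to let each vertex have only $O(\log n)$ neighbours at every level of the recursion. For a missing pair $u,v$, let $i$ be the first bit where their labels differ; then there should be, at level $i$, a vertex $w$ with $\orarrow{vw},\orarrow{wu}$ present for either orientation of the new edge. At the same time the graph must stay $\vec C_3$-free, for instance by keeping it acyclic with respect to a fixed linear order. This is essentially the known result cited in the introduction, so I would mostly adapt that construction.

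For $\Theta(n\sqrt{\log n})$ I need a new graph. My candidate is $\vec C_3$ together with some extra structure, chosen so that the host graph can use a mix of linear-cost and $n\log n$-cost gadgets. A natural first try is two cyclic triangles sharing a vertex, or a cyclic triangle with a pendant edge, picking whichever admits the balancing argument. For the lower bound I would adapt the edge-ordered saturation method that gave $\Omega(n\sqrt{\log n})$ in~\cite{boskesz026}. The argument should show that a vertex of low degree $d$ forces its neighbourhood structure to resolve about $2^{d}$ orientation constraints, in a pattern resembling a binary search. Summing this over all vertices should give $\sum d_v\gtrsim n\sqrt{\log n}$, via the trade-off between the number of low-degree vertices and the depth each one needs. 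For the upper bound, I would partition the vertices into blocks of size $2^{\sqrt{\log n}}$, run the $\vec C_3$-type recursive construction inside each block at cost $n\sqrt{\log n}$, and connect the blocks through $\sqrt{\log n}$ hub vertices per vertex that handle the missing edges between blocks.

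The main obstacle is this last item: choosing the right $G$ so that both bounds match at $n\sqrt{\log n}$ while the construction remains $G$-free. The $\vec C_3$ case, by contrast, is largely handled by Theorem~\ref{thm:ssat} together with the known construction.
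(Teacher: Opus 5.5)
Your treatment of $O(1)$, $\Theta(n)$ and of the lower bound for $\vec C_3$ (the paper's $D_1$) is correct. The $\vec C_3$ upper bound can simply be cited from F\"uredi et al.\ or taken from Lemma~\ref{bip nlogn upper}. Your own sketch of that upper bound is wrong, however. A host that is acyclic with respect to a fixed linear order can never saturate $\vec C_3$ unless it is complete, because adding a missing pair in the forward direction keeps it acyclic and creates no directed triangle. The paper's construction instead avoids triangles by being bipartite: hubs $w_i$ with $V_i\to w_i\to U_i$, plus twin hubs with the roles reversed, placed against an independent set. This graph necessarily contains directed cycles.

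The genuine gap is the order $\Theta(n\sqrt{\log n})$, which is the heart of the theorem. You name no graph, and both of your candidates provably have other orders. Two cyclic triangles sharing a vertex have every edge in an oriented triangle and an edge whose endpoints both have degree $2$, so the corollary of Lemma~\ref{sat_nlogn} gives $\Theta(n\log n)$. A cyclic triangle with a pendant edge is $W_1$ or $Q_1$, which is $\Theta(n)$ via disjoint cyclic triangles. Your block construction also fails if the hubs saturate inter-block pairs by completing oriented triangles. Only $n^{1+o(1)}$ pairs lie inside blocks, so Claim~\ref{claim:bipdense} applies and the argument of Lemma~\ref{ssat_nlogn} again forces $\Omega(n\log n)$ edges. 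Hence most missing pairs must be saturated in a non-triangle role.

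The missing idea is the paper's graph $Q_s$ with $s\ge 2$. It consists of $s$ oriented triangles $x\to m_i\to y\to x$ sharing the edge $\orarrow{yx}$, plus a pendant edge entering $x$.

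\emph{Upper bound.} Take a set $V_2$ of size $\lceil\sqrt{\log n}\rceil$ and a set $V_3$ of size $\Theta(n/\sqrt{\log n})$, with all edges oriented $V_3\to V_2$. Almost all vertices form $V_1$, and each $v\in V_1$ has degree $s+1$: one in-edge from $V_2$ and $s$ out-edges to a group in $V_3$. Every edge entering $v$ is then saturated in the pendant role. Only $V_3$ carries a bipartite $D_s$-saturating structure, which costs $O(|V_3|\log|V_3|)=O(n\sqrt{\log n})$.

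\emph{Lower bound.} Vertices that can be the non-leaf end of an added pendant have in-degree exactly one. Their instars lie in the remaining set $V_2$, and each edge lies in at most $s-1$ of them. Corollary~\ref{cor:subsat} gives $\Omega(N\log N)$ edges for $N=|V_2|$. On the other hand, a missing edge $\orarrow{w_jw_i}$ between instar centers forces an edge $v_iw_j$. Since $\sum a_i\ge n/2$ and $a_i\le N$, this yields $\sum_{i<j}\min\{a_i,a_j\}=\Omega(n^2/N)$ further edges. Balancing the two bounds at $N\approx n/\sqrt{\log n}$ gives $n\sqrt{\log n}$. Your $2^{d}$ binary-search heuristic does not capture this trade-off and is not an argument that could be checked.
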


The appearance of the function $\Theta(n\sqrt{\log n})$ is novel for several reasons. First, it does not appear when considering semisaturation functions, see Theorem \ref{thm:ssat}. This contrasts with the unordered and vertex-ordered settings, where saturation and semisaturation functions always have the same set of orders of magnitude. For edge-ordered graphs, the two notions may differ, although whether this actually happens remains an open problem~\cite{boskesz026}. Also, while it was shown for edge-ordered graphs that at least three orders of magnitude exist, the non-standard one (the one besides $O(1)$ and $\Theta(n)$) is not completely determined. For oriented graphs, by Theorem \ref{thm:orders} we have two non-standard orders of magnitude determined exactly. The first is $\Theta(n\log n)$, found already by F\"uredi et al.~\cite{Furedi98}, while the second is $\Theta(n\sqrt{\log n})$, whose proof introduces several interesting new ideas.

As a consequence of Theorem \ref{thm:ssat} and Observation \ref{ssat < sat}, the lower saturation bound $\Omega(n \log n)$ applies to a large family of oriented graphs, which includes graphs in which every edge is contained in an oriented triangle. Proving the matching upper bound appears to be more challenging, and we identify certain classes for which it holds.

\section{Trichotomy of semisaturation functions}

In this section, our goal is to prove Theorem \ref{thm:ssat}. We start by characterizing the oriented graphs with a bounded semisaturation function.

\begin{lem} \label{ssat_bounded}
    Let $G$ be an oriented graph that contains an isolated edge. Then \[\mathrm{ssat}_{\shortto}(n,G) = O(1).\]
\end{lem}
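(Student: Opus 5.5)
Write $G = G' + e$, where $e=\orarrow{ab}$ is an isolated edge, and let $k=|V(G)|$. The plan is to build a host graph $H$ on $n$ vertices with $O(1)$ edges such that adding any missing edge to $H$ creates a \emph{new} copy of $G$, that is, a copy using the added edge.

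The first idea is to take a copy of $G'$ and $n-|V(G')|$ isolated vertices. Adding an edge $\orarrow{xy}$ between two isolated vertices then creates a copy of $G$ consisting of the fixed $G'$ and the new edge $\orarrow{xy}$ playing the role of $e$. The difficulty is the edges added with at least one endpoint inside the copy of $G'$. To handle them, I will make the non-isolated part $K$ of $H$ a bounded-size graph on some vertex set $W$ of size $m$, independent of $n$. The requirement is that adding any missing edge inside $W$, or between $W$ and an isolated vertex, creates a copy of $G$ containing that edge.

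The natural choice is to let $K$ be a disjoint union of several copies of $G'$. Then three cases arise.
\begin{itemize}
\item An edge $\orarrow{xy}$ with $x\in W$ and $y$ isolated: if $x$ lies in one copy of $G'$, use the new edge as $e$ together with a different copy of $G'$ disjoint from $x$.
\item An edge with both ends in the same copy of $G'$, or in two different copies: take a third copy of $G'$, and let the new edge play the role of $e$.
\item An edge between two isolated vertices: this is handled as above.
\end{itemize}
So three disjoint copies of $G'$ suffice: for any new edge $\orarrow{xy}$, at most two copies are touched by $\{x,y\}$, so some copy of $G'$ is vertex-disjoint from $x$ and $y$. Together with $\orarrow{xy}$ as the isolated edge, that copy forms a new copy of $G$. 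Hence $H=3G'+(n-3|V(G')|)$ isolated vertices works, and it has $3|E(G')|=O(1)$ edges.

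The only subtlety, which I expect to be the main (minor) obstacle, is checking that the created copy of $G$ is genuinely new and that $e$ really can be mapped onto the new edge. This holds because $e$ is isolated in $G$, so its image needs no adjacency to the image of $G'$. Since the new copy contains the added edge, which was absent before, it did not exist in $H$. This uses only the semisaturation definition, which does not require $H$ to avoid $G$, so $H$ may itself contain copies of $G$ when $G'$ has isolated edges. This gives $\mathrm{ssat}_{\shortto}(n,G)\le 3|E(G)|$ for $n\ge 3k$, and trivially $O(1)$ for smaller $n$.
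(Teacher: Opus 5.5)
Your proposal is correct and is essentially the paper's proof. Both take three disjoint copies of $G$ with the isolated edge's endpoints removed, plus isolated vertices. Any added edge misses at least one of the three copies, and that copy together with the added edge forms a new copy of $G$.
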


\begin{proof}
    Let $\orarrow{uv}$ be an isolated edge in $G$.
    Let $H$ be the oriented graph on $n$ vertices, which consists of the disjoint union of three copies of the graph $G \setminus \{u, v\}$, with the remaining vertices being isolated. If we take any two non-connected vertices in $H$ and connect them with an oriented edge, then one of the three copies of $G \setminus \{u, v\}$ contains none of the endvertices of this new edge. Thus, a new copy of $G$ is created. Since $H$ has a bounded number of edges, we conclude that $\mathrm{ssat}_{\shortto}(n,G) = O(1)$.
\end{proof}

Using the notion of a compatible neighborhood, we characterize all oriented graphs with linear saturation functions.

\begin{lem} \label{ssat_n}
    Let $G$ be an oriented graph that contains no isolated edges, and there exists an edge $\orarrow{uv} \in E$ with a compatible neighborhood. Then
    \[\mathrm{ssat}_{\shortto}(n,G) = \Theta(n).\]
\end{lem}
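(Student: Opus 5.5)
The lower bound $\mathrm{ssat}_{\shortto}(n,G)=\Omega(n)$ is already contained in Lemma \ref{dichotomy_isolated_edge}. Since $G$ has no isolated edge, adding an edge between two isolated vertices of the host creates no new copy of $G$. Hence any semisaturated host has at most one isolated vertex, and therefore at least $\floor{n/2}$ edges. So it remains to give a construction with $O(n)$ edges.

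For the upper bound I would build a host $H$ from a bounded "core" together with a linear number of pendant structures. Let $\orarrow{uv}$ be the edge with a compatible neighborhood. Partition $N_G(u)\cup N_G(v)\setminus\{u,v\}$ into four sets: $A$, the common in-neighbors ($\orarrow{wu},\orarrow{wv}\in E$); $B$, the common out-neighbors; $P$, the private neighbors of $u$; and $Q$, the private neighbors of $v$. Compatibility says precisely that every common neighbor lies in $A$ or $B$, with no "mixed" common neighbors. Let $G' = G-\{u,v\}$.

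The construction is as follows. Take $H$ to consist of a fixed number $c$ of disjoint copies of $G'$; the number $c$ is large and depends only on $G$. Then join every other vertex $x$ of $H$ (the "outside" vertices) to these copies. In each copy, $x$ plays the role of both $u$ and $v$ simultaneously: $x$ receives edges from the copies of $A$, sends edges to the copies of $B$, and is joined to the copies of $P$ and $Q$ with the orientations dictated by $u$ and $v$ respectively. This is possible because the orientations required from $A$ to $u$ and from $A$ to $v$ agree, and likewise for $B$. Orientations toward $P$ and $Q$ never conflict, since a private neighbor is adjacent to only one of $u,v$. Each outside vertex receives $O(1)$ edges, so $|E(H)|=O(n)$.

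Now suppose a new edge $\orarrow{xy}$ is added between two nonadjacent vertices. If both $x$ and $y$ are outside vertices, pick a copy of $G'$; then $x$ plays $u$ and $y$ plays $v$, and all their edges into that copy are present, so a new copy of $G$ appears. The new edge is the reversed orientation only in the labelling; that case is symmetric, because we may read $x$ as $u$ whichever direction the edge has.

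The main obstacle is the remaining pairs: those inside a core copy, between two different copies, or between an outside vertex and a core vertex to which it is not joined. Since there are only $O(1)$ core vertices, I would handle these by saturating greedily. After building the structure above, repeatedly add edges of either orientation among pairs involving core vertices whenever some orientation of the missing pair is still "bad", that is, adding it would create no new copy of $G$. Semisaturation does not forbid copies of $G$ in $H$, so this greedy step is free.

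The cost of this step must stay linear. Pairs inside the core cost $O(1)$. Pairs between a core vertex and an outside vertex number $O(c\,n)=O(n)$ in total, so even adding all of them keeps the edge count linear. So I would add all missing core–outside edges, oriented arbitrarily, and all core–core edges. The only pairs left unaddressed are then outside–outside pairs, which are handled above. The one point to check is that these extra edges do not destroy the outside–outside argument; they do not, since adding edges cannot remove the copy already exhibited.

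If that holds, every missing pair is either covered by the copy argument or has already been made an edge, giving $\mathrm{ssat}_{\shortto}(n,G)=O(n)$.
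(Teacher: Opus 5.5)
Your proposal is correct and is essentially the paper's construction. The outside vertices form an independent set, and each one is joined to $G-\{u,v\}$ so that it plays the roles of $u$ and $v$ at once, which compatibility makes well defined. A single copy of $G-\{u,v\}$ already suffices, and your explicit step of adding all $O(n)$ pairs incident to the core fills in a point the paper's proof leaves implicit.
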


\begin{proof}
    By Lemma \ref{dichotomy_isolated_edge}, it suffices to prove the linear upper bound.

    Let $N_G(u) \setminus \{v\} =\{u_1, u_2, ..., u_s\}$ and $N_G(v) \setminus \{u\} = \{v_1, v_2, \dots, v_t\}$, and assume $r = |V(G)|$. Let $H'$ be the graph obtained by taking the union of $G \setminus \{u, v\}$ and $n-r+2$ isolated vertices, which we denote by $\{w_1, w_2, ..., w_{n-r+2}\}$. Now, we construct the host graph $H$ by adding an edge $\orarrow{u_iw_j}$ for all $j \in \{1, \dots, n-r+2\}$, if $\orarrow{u_iu} \in E(G)$. Otherwise, we add the edges $\orarrow{w_ju_i}$ if $\orarrow{u u_i} \in E(G)$. We repeat the same procedure for all $i \in \{1, \dots, s\}$. Then, we repeat it for the vertices $\{v_1, \dots, v_t\}$, adding an edge $\orarrow{v_iw_j}$ whenever $\orarrow{v_iv} \in E(G)$ or an edge $\orarrow{w_jv_i}$ if $\orarrow{vv_i} \in E(G)$. Note that we can have $\{u_1, u_2, ..., u_s\} \cap \{v_1, v_2, ..., v_t\} \neq \varnothing$, but by the assumption, the orientation of each new edge is uniquely determined.
    
    Since adding any non-edge in $\{w_1, w_2, ..., w_{n-r+2}\}$ as an edge can have the role of the edge $\orarrow{uv}$ in a new copy of $G$, we conclude that $H$ semisaturates $G$.
\end{proof}

For our next result, we need a lower bound on the capacity of a bipartite covering of almost complete graphs. A \emph{bipartite covering} of a graph $G$ is a collection of bipartite graphs such that each edge of $G$ belongs to at least one of them. The \emph{capacity} of the covering is the sum of the numbers of vertices of these bipartite graphs. We are in fact specifically interested in the minimum number of bipartite graphs of the covering that meet at some vertex. The following statement follows easily from a result of Katona and Szemerédi~\cite{katona} (for more details, check~\cite{boskesz026} and for closely related results, see~\cite{alon,dong007}):

\begin{claim} \cite{boskesz026} \label{claim:bipdense}
	Let $G$ be a graph with at most $cn^{2-\epsilon}$ missing edges, where $c,\epsilon > 0$ are constants, and let $\cal B$ be a bipartite covering of $G$. Then there exists a vertex that is incident to edges belonging to at least $c' \log n$ bipartite graphs in $\cal B$, where $c'$ depends only on $c$ and $\epsilon$.		
\end{claim}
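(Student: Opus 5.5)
The plan is to reduce to the complete-graph case, where the classical bound of Katona and Szemerédi (equivalently, Hansel's lemma) applies. That bound says that any bipartite covering of $K_m$ has capacity at least $m\log_2 m$. We first find a large clique inside $G$, restrict the covering to it, and then average over the clique's vertices.

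\emph{Step 1 (a large clique).} Let $\overline{G}$ be the complement of $G$ on the same $n$ vertices. It has at most $cn^{2-\epsilon}$ edges, so its average degree is $d\le 2cn^{1-\epsilon}$. By Turán's theorem in its Caro--Wei form, $\overline{G}$ has an independent set of size at least $n/(d+1)\ge n^{\epsilon}/(2c+1)$ for $n$ large. This set is a clique $K$ in $G$ on $m\ge n^{\epsilon}/(2c+1)$ vertices.

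\emph{Step 2 (restricting the covering).} For each $B\in\mathcal B$, let $B_K$ be the bipartite graph formed by the edges of $B$ with both endpoints in $K$, with isolated vertices discarded. Every edge of $K$ is an edge of $G$, so it lies in some $B$ and hence in $B_K$. Thus $\{B_K\}$ is a bipartite covering of $K_m$. A vertex $x\in K$ belongs to $B_K$ only if $x$ is incident to an edge of $B$. Therefore, if $k(x)$ denotes the number of members of $\mathcal B$ having an edge at $x$, the capacity of $\{B_K\}$ is at most $\sum_{x\in K}k(x)$.

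\emph{Step 3 (Hansel/Katona--Szemerédi and averaging).} We have
\[\sum_{x\in K}k(x)\ \ge\ \sum_{B}|V(B_K)|\ \ge\ m\log_2 m .\]
To keep the argument self-contained, we can prove the middle-to-right inequality by the standard probabilistic argument. For each $B_K$, independently pick one of its two sides uniformly at random, and delete from $K$ every vertex lying on the picked side of some $B_K$ containing it. At most one vertex can survive, since any two surviving vertices form an edge of $K$, which is covered by some $B_K$ that separates them. A vertex $x$ survives with probability $2^{-k_K(x)}$, where $k_K(x)$ is the number of $B_K$ containing $x$. Hence $\sum_x 2^{-k_K(x)}\le 1$, and Jensen's inequality gives $\sum_x k_K(x)\ge m\log_2 m$.

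By averaging, some $x\in K$ has
\[k(x)\ \ge\ \log_2 m\ \ge\ \epsilon\log_2 n-\log_2(2c+1)\ \ge\ c'\log n\]
for $n$ large, with $c'$ depending only on $c$ and $\epsilon$. For bounded $n$, the constant $c'$ can be chosen small enough that the statement holds trivially, using that $G$ has an edge once $n$ is large relative to $c$.

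The only step that needs care is Step 2. The counting must be done with respect to incidence to edges of $B$, not merely membership of $x$ in $V(B)$, so that the final bound matches the statement ``incident to edges belonging to'' the bipartite graphs. The rest is routine.
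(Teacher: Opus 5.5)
Your argument is correct and is essentially the route the paper has in mind. The paper gives no proof beyond saying the claim follows easily from the Katona--Szemer\'edi (Hansel) bound $m\log_2 m$ on the capacity of a bipartite covering of $K_m$, and your steps carry out exactly that reduction: a clique of size $\Omega(n^{\epsilon})$ from Caro--Wei, restriction of $\mathcal B$ to it, the probabilistic proof of the capacity bound, and averaging, with the incidence counting handled properly.

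There are two minor points. First, the estimate $m\ge n^{\epsilon}/(2c+1)$ needs $\epsilon\le 1$; just replace $\epsilon$ by $\min\{\epsilon,1\}$. Second, $G$ may be edgeless when $\binom{n}{2}\le cn^{2-\epsilon}$, so the claim can only hold for $n\ge n_0(c,\epsilon)$; those very small $n$ must be excluded rather than absorbed into a small choice of $c'$.
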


\begin{lem} \label{ssat_nlogn}
    Let $G$ be an oriented graph such that for each edge $\orarrow{uv} \in E$ there exists $w \in N_G(u) \cap N_G(v)$, such that either $\{\orarrow{uw}, \orarrow{wv}\} \subset E$ or $\{\orarrow{vw},\orarrow{wu}\} \subset E$. Then
    \[\mathrm{ssat}_{\shortto}(n,G) = \Theta(n \log n).\]
\end{lem}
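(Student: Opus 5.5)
We have to prove an $O(n\log n)$ upper bound and an $\Omega(n\log n)$ lower bound. The hypothesis says that every edge $\orarrow{uv}$ of $G$ lies in a directed path $u\to w\to v$ or $v\to w\to u$ of length two. Equivalently, no edge of $G$ has a compatible neighborhood. In particular, $G$ has no isolated edge.

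\textbf{Lower bound.} Let $H$ semisaturate $G$ on $n$ vertices with $o(n\log n)$ edges; we aim for a contradiction. First we discard vertices of high degree. At most $n/\log n \cdot o(1)$ vertices have degree at least $\log^2 n$, say. More carefully, vertices of degree larger than $K=\delta\log n$ number at most $2e(H)/K=o(n)$. Let $L$ be the set of the remaining low-degree vertices, so $|L|\ge n/2$. The graph $H[L]$ has $O(n\log n)=o(n^{2-\epsilon})$ edges, so the complement of $H$ restricted to $L$ misses at most $cn^{2-\epsilon}$ pairs.

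Now take any non-edge $xy$ in $L$, and add, say, $\orarrow{xy}$. Some new copy of $G$ uses $\orarrow{xy}$ as the image of an edge $\orarrow{uv}$. By hypothesis there is a $w$ with $x\to w\to y$ or $y\to w\to x$ in $H$. So in $H$, for each such non-edge and each chosen orientation, there is a directed path of length two between $x$ and $y$, through some midpoint $w$.

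For each vertex $w$, define the bipartite graph $B_w$ with parts $N^-(w)\cap L$ and $N^+(w)\cap L$. These graphs cover all non-edges of $L$; every non-edge of $L$ is covered regardless of which orientation we add. Hence $\{B_w\}$ is a bipartite covering of the complement of $H$ on $L$, which is a graph with at most $cn^{2-\epsilon}$ missing edges. By Claim~\ref{claim:bipdense}, some vertex $x\in L$ meets at least $c'\log n$ of the graphs $B_w$. But $x$ can lie in $B_w$ only if $w\in N(x)$, so $\deg x\ge c'\log n$. Choosing $\delta<c'$ contradicts $x\in L$. The main care needed is the degree threshold: $c'$ depends only on $c,\epsilon$, which we fix in advance, with $\epsilon=1/2$, so choosing $\delta$ after is legitimate.

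\textbf{Upper bound.} We use a construction in the spirit of F\"uredi et al. Let $r=|V(G)|$. Fix an edge $\orarrow{uv}$ of $G$, and build $H$ from a copy of $G\setminus\{u,v\}$, whose vertices we call anchors, together with a set $W$ of $n-r+2$ new vertices. We want every non-edge inside $W$, in either orientation, to be able to play the role of $\orarrow{uv}$. Join every vertex of $W$ to the anchors that are neighbors of $u$ or of $v$. Conflicts occur exactly at common neighbors whose orientations disagree, and those must be realized differently.

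To handle this, index $W$ by binary strings of length $\lceil\log n\rceil$. For each coordinate $i$ and each bit value $b$, take a fresh copy of the anchor set $A_{i,b}$, of constant size. A vertex $x\in W$ is joined to $A_{i,b}$ in the orientation pattern of $u$ if $x_i=b$, and in the pattern of $v$ otherwise. Any two distinct $x,y\in W$ differ in some coordinate $i$. In the copy $A_{i,x_i}$, vertex $x$ behaves like $u$ and $y$ behaves like $v$, so adding $\orarrow{xy}$ creates $G$. Using $A_{i,y_i}$ handles $\orarrow{yx}$.

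Each vertex of $W$ has degree $O(\log n)$, giving $O(n\log n)$ edges in total. The anchors themselves are only $O(\log n)$ vertices, and non-edges among or to anchors can be made edges, for $O(n\log n)$ additional edges at most. This matches the lower bound.
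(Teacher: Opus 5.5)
Your proposal is correct and is essentially the paper's proof. The upper bound is the same construction with two copies of $G\setminus\{u,v\}$ per bipartition; your coordinate bipartitions, which separate each pair only once, suffice, and you rightly also fill in the $O(n\log n)$ pairs at the anchors. The lower bound uses the same covering of non-edges by the bipartite graphs between $N^-(w)$ and $N^+(w)$ together with Claim~\ref{claim:bipdense}, and differs only in bookkeeping: you discard vertices of degree above $\delta\log n$ and apply the claim once to reach a contradiction, whereas the paper repeatedly finds and deletes a vertex of degree $\Omega(\log n)$, $n/2$ times.
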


\begin{proof}
    We first proceed with the upper bound. Take $\orarrow{uv} \in E(G)$ and let $N_G(u) \setminus \{v\} =\{u_1, u_2, ..., u_s\}$ and $N_G(v) \setminus \{u\} = \{v_1, v_2, ..., v_t\}$. We construct a host graph $H$ as a disjoint union of $k$ copies of the graph $G^{uv} = G \setminus \{u,v\}$ and $n'$ isolated vertices denoted by $W = \{w_1, w_2, \dots, w_{n'}\}$, where $k \ge 2 \ceil{\log n'}$. We can choose $k$ such that $k=O(\log n)$.

    We can now find $k$ different bipartitions of the set $W$ such that any pair of vertices is separated in at least two of them. For $i \in \{1, \dots, k\}$, we can take one bipartition $(U_i, V_i)$ of the set $W$ and one of the copies of $G^{uv}$ which we denote by $G_{i}^{uv}$. If a vertex is in the set $U_i$, then we connect it with the vertices $\{u_1, \dots, u_s\}$ of the copy $G_{i}^{uv}$ using the same orientations as $u$ is connected to the vertices $\{u_1, \dots, u_s\}$ in $G$. If a vertex is in the set $V_i$, then we connect it with the vertices $\{v_1, \dots, v_t\}$ of the copy $G_{i}^{uv}$ using the same orientation as $v$ is connected to the vertices $\{v_1, \dots, v_t\}$ in $G$. For the same $(U_i, V_i)$, we take another copy of $G^{uv}$ and do the same by switching the role of $U_i$ and $V_i$.
    
    If we add a new edge to $H$, such that one vertex is in $U_i$ and the other is in $V_i$, oriented towards the vertex in $V_i$, then together with the copy $G_{i}^{uv}$, we obtain a new copy of $G$. If it is oriented towards the vertex in $U_i$, then together with the other copy of  $G^{uv}$, we obtain a new copy of $G$.
    Since any two vertices in $W$ are separated in at least one bipartition, it follows that for any new edge, the host graph $H$ semisaturates $G$. This shows that
    \[\mathrm{ssat}_{\shortto}(n, G) = O(n \log n).\]
    
    \smallskip
    
    We continue with the lower bound. Let $H$ be a saturation host graph on $n$ vertices. For a pair of vertices $f=\{u,v\}$ that do not span an edge of $H$, adding an edge at $f$ in any orientation creates a copy of $G$. Recall that for every edge in $G$ there exists the required $w$. Thus, independently of which edge of $G$ corresponds to $f$, there is a vertex $\pi(f) \in V(H)$ such that either 
    \[\{\orarrow{u\pi(f)}, \orarrow{\pi(f)v}\} \subset E(H) \text{ or } \{\orarrow{v\pi(f)},\orarrow{\pi(f)u}\} \subset E(H).\] 
    If multiple such vertices exist, we set $\pi(f)$ to be one of them arbitrarily. For a given vertex $t\in V(H)$, consider the set of all non-edges $f$ with $\pi(f)=t$.
    Observe that $f$ must connect a vertex in $N_G^{in}(t)$ to a vertex in $N_G^{out}(t)$, the neighbors of $t$ that have an edge towards $t$ and away from $t$, respectively. Thus, there is a bipartition of the neighbors of $t$ in $H$ such that every non-edge in $\pi^{-1}(t)$ is between the two parts of this bipartition.
		
	Assume on the contrary that $H$ has less than $cn{\log n}$ edges for some $c\le 1$ chosen later. The non-edges of $H$ must be covered by the sets of non-edges $\pi^{-1}(t)$ where $t$ is a vertex of $H$. As each $\pi^{-1}(t)$ is bipartite, we can apply Claim \ref{claim:bipdense} on the complement of $H$. First, we choose constants $c_0,\epsilon_0$ independently of $c$ such that \[ n\log n\le c_0(n/2)^{2-\epsilon_0}.\] Now we can apply Claim \ref{claim:bipdense} with $c_0,\epsilon_0$.		
	We get that there is a vertex $v$ in $H$ which participates in at least $c'\log n$ many bipartitions that cover the non-edges of $H$. For each $t$ such that $\pi^{-1}(t)$ contains a non-edge $f$ with $\pi(f)=t$ and whose one endpoint is $v$, there is an edge in $H$ from $t$ to both vertices of $f$, and thus to $v$ as well. Thus $v$ is connected to at least $c'\log n$ different such vertices, therefore, to at least $c'\log n$ vertices in $H$.
    
    Delete $v$ from $H$ and repeat this procedure $n/2$ times. In each step we can apply Claim \ref{claim:bipdense} with the same $c_0$ and $\epsilon_0$ to find and delete a vertex with degree at least $c'\log (n/2)$. Altogether, we have found at least $\frac{n}{2} c'\log (n/2)$ edges in $H$. When $c$ is small enough this contradicts our assumption that $H$ has less than $cn\log n$ edges (note that $c'$ was independent from $c$).    
\end{proof}

We note that the lower bound works the same way in a more general, albeit more technical setting, which we state as we will need it later:

\begin{cor} \label{cor:subsat}
    Let $G$ be an oriented graph such that for each edge $\orarrow{uv} \in E$ there exists $w \in N_G(u) \cap N_G(v)$, such that either $\{\orarrow{uw}, \orarrow{wv}\} \subset E$ or $\{\orarrow{vw},\orarrow{wu}\} \subset E$. Let $H$ be a graph with vertex set $V=V_1\cup V_2$ such that for every non-edge of $V_2$ adding this edge would create a new copy of $G$ in $H$. Then $H$ has $\Omega(|V_2|\log |V_2|)$ edges.
\end{cor}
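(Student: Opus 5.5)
We follow the lower-bound part of the proof of Lemma \ref{ssat_nlogn}, restricting attention to non-edges inside $V_2$. Let $m=|V_2|$. For every pair $f=\{x,y\}\subset V_2$ that is not an edge of $H$, adding $f$ in either orientation creates a new copy of $G$, and the new edge plays the role of some edge $\orarrow{uv}$ of $G$. By hypothesis, that edge has a $w$ forming a directed path of length two between $u$ and $v$. Hence $H$ contains a vertex $\pi(f)\in V=V_1\cup V_2$ with either $\{\orarrow{x\pi(f)},\orarrow{\pi(f)y}\}\subset E(H)$ or $\{\orarrow{y\pi(f)},\orarrow{\pi(f)x}\}\subset E(H)$. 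We fix one such vertex for each $f$.

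For each $t\in V$, every non-edge in $\pi^{-1}(t)$ joins an in-neighbour of $t$ to an out-neighbour of $t$. So the sets $\pi^{-1}(t)$, $t\in V$, form a bipartite covering of the complement of $H[V_2]$, viewed as a graph on $m$ vertices. The vertices $t$ may lie in $V_1$, but this does not matter, since Claim \ref{claim:bipdense} only concerns the covered graph on $V_2$.

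Now assume $H$ has fewer than $cm\log m$ edges, with $c\le 1$ small, to be chosen last. Then $H[V_2]$ has at most $cm\log m\le c_0(m/2)^{2-\epsilon_0}$ edges, where $c_0,\epsilon_0$ are fixed independently of $c$. So its complement misses few edges, and Claim \ref{claim:bipdense} gives a vertex $x\in V_2$ incident to non-edges lying in at least $c'\log m$ distinct sets $\pi^{-1}(t)$. Each such $t$ is adjacent to $x$, so $x$ has at least $c'\log m$ neighbours in $H$ (anywhere in $V$).

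We delete $x$ from $V_2$ and also from $H$. We repeat this $m/2$ times. At each stage the current $V_2'$ has size at least $m/2$, and every non-edge inside $V_2'$ still has its witness $\pi(f)$, provided that witness was not deleted. This is the step that needs care, and it is the main obstacle. Deleting $x$ removes $x$ as a potential witness $t$ for other pairs, so covering could in principle fail. The fix is not to delete $x$ from $H$ at all, only from the set $V_2'$ whose non-edges we must cover. The witnesses $t$ range over all of $V$ throughout, so the covering property is preserved.

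The remaining issue is that the edges counted at different stages must be distinct. An edge $xt$ counted when $x$ is removed could be counted again later if $t\in V_2'$ is removed afterwards. This only causes a factor-$2$ overcount, since each edge has two endpoints. Summing, $H$ has at least $\frac12\cdot\frac m2\,c'\log(m/2)$ edges. Because $c'$ depends only on $c_0,\epsilon_0$ and not on $c$, choosing $c$ small gives a contradiction, hence $|E(H)|=\Omega(|V_2|\log|V_2|)$. If the density hypothesis of Claim \ref{claim:bipdense} needs a lower bound on $m$, small $m$ is absorbed into the constant.
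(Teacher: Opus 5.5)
Your proposal is correct and is essentially the paper's argument: the paper only remarks that the lower-bound proof of Lemma~\ref{ssat_nlogn} goes through unchanged when one covers just the non-edges inside $V_2$, with the witnesses $\pi(f)$ allowed to lie anywhere in $V$. You shrink only the set $V_2'$, keep every vertex of $H$ available as a witness, and absorb the resulting double counting into a factor of $2$; this is a sound and slightly more careful treatment of the iterative deletion step, which the paper handles informally.
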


Combining the previous three lemmas implies Theorem \ref{thm:ssat}. To give a sense of the assumption of compatible neighborhoods, we mention that Lemma \ref{ssat_nlogn} implies the following bound.

\begin{cor}\label{cor:triangles}
Let $G$ be an oriented graph such that every edge is in some oriented triangle. Then \[\mathrm{ssat}_{\shortto}(n,G) = \Theta(n \log n).\]
\end{cor}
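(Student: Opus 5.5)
The plan is to check that the hypothesis of Lemma \ref{ssat_nlogn} holds and then apply it directly. Let $\orarrow{uv}$ be an edge of $G$. By assumption it lies in some oriented triangle, i.e.\ a directed $3$-cycle on vertices $u,v,w$. The only orientation of the remaining two edges that makes a directed cycle together with $\orarrow{uv}$ is $\orarrow{vw},\orarrow{wu}$. So we get a vertex $w\in N_G(u)\cap N_G(v)$ with $\{\orarrow{vw},\orarrow{wu}\}\subset E$, which is exactly the second alternative required in Lemma \ref{ssat_nlogn}.

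Since $\orarrow{uv}$ was arbitrary, every edge of $G$ satisfies the hypothesis of Lemma \ref{ssat_nlogn}. The lemma then gives $\mathrm{ssat}_{\shortto}(n,G)=\Theta(n\log n)$: the upper bound comes from the construction with $O(\log n)$ bipartitions, and the lower bound from Claim \ref{claim:bipdense}.

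There is essentially no obstacle. The one point to check is the meaning of ``oriented triangle'': it should be the cyclic orientation. The transitive triangle would not work, since an edge could then be the ``long'' edge of the triangle, and that is precisely a compatible-neighborhood configuration. With the cyclic reading, the two-edge path from $v$ to $u$ through $w$ is forced, and the argument goes through.

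Consistency check with Theorem \ref{thm:ssat}: such a $G$ has no isolated edge, because every edge lies in a triangle. It also has no edge with a compatible neighborhood, because the witness $w$ for $\orarrow{uv}$ is adjacent to $u$ and $v$ with mixed orientations. So $G$ falls in the third case of Theorem \ref{thm:ssat}, which agrees with the conclusion.
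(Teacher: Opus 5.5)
Your proof is correct and is exactly the paper's argument: the corollary is an immediate application of Lemma~\ref{ssat_nlogn}, since the cyclic triangle $u\to v\to w\to u$ through $\orarrow{uv}$ supplies the second alternative $\{\orarrow{vw},\orarrow{wu}\}$. One small slip in your side remark: in a transitive triangle $u\to w\to v$, $u\to v$, the long edge $\orarrow{uv}$ actually satisfies the \emph{first} alternative $\{\orarrow{uw},\orarrow{wv}\}$ of the lemma, and it is the two short edges, whose third vertex is a common in- or out-neighbour, that have the compatible configuration and make the transitive reading fail.
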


\section{Saturation functions}

In this section, we proceed to systematically study the saturation functions. We distinguish two natural families of oriented graphs, depending on whether they contain an isolated edge. 

\subsection{Oriented graphs with an isolated edge} \label{with_isolated_edge}

Recall that by Lemma \ref{dichotomy_isolated_edge}, when there is an isolated edge in the graph, the saturation function is either bounded or linear. Graphs that are tournaments (i.e., a complete oriented graphs) together with an isolated edge will play a distinguished role.

\begin{lem} \label{k4_bounded}
    Let $G$ be an oriented graph on $k$ vertices such that $\mathrm{sat}_{\shortto}(n,G + e) = O(1)$. Then, its host graph with $n$ vertices and $\mathrm{sat}_{\shortto}(n,G + e)$ edges has at most $k^2$ non-isolated vertices. Consequently, $\mathrm{sat}_{\shortto}(n,G + e) = O(k^4)$.
\end{lem}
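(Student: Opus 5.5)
The plan is to show first that the host graph must already contain a copy of $G$. Once that holds, every edge of the host graph must touch that copy, and the remaining task is to bound how many vertices outside the copy can be non-isolated.

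Let $H$ be an extremal host graph on $n$ vertices, with $\mathrm{sat}_{\shortto}(n,G+e)=O(1)$ edges. By Lemma~\ref{dichotomy_isolated_edge} and its proof, for $n$ large $H$ has many isolated vertices. Fix two isolated vertices $x,y$ and add the edge $\orarrow{xy}$. This creates a copy of $G+e$ through $\orarrow{xy}$. Since $x$ and $y$ have no other edges, $\orarrow{xy}$ is an isolated edge of that copy. The copy minus this isolated edge lies in $H$, and $G+e$ with any one isolated edge removed is isomorphic to $G$. So $H$ contains a copy $G_0$ of $G$ on $k$ vertices.

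Because $H$ is $(G+e)$-free, no edge of $H$ is vertex-disjoint from $V(G_0)$; otherwise that edge together with $G_0$ would give $G+e$. Hence every edge of $H$ meets $V(G_0)$. The non-isolated vertices are therefore $V(G_0)$ together with the set $B$ of vertices outside $V(G_0)$ that have a neighbour in $V(G_0)$. The goal is $|B|\le k^2-k$, or more simply that each $a\in V(G_0)$ has at most $k-1$ neighbours outside $V(G_0)$.

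For this bound I would use saturation again, now with pairs consisting of a vertex $b\in B$ and an isolated vertex $x$. Adding $\orarrow{bx}$ (or $\orarrow{xb}$) creates a copy of $G+e$ in which $x$ has degree one. There are two cases.
\begin{itemize}
\item If the new edge plays the role of the isolated edge $e$, then $H-b$ contains a copy $G_1$ of $G$ avoiding $b$. Every edge at $b$ must then meet $V(G_1)$ as well.
\item If the new edge lies inside the $G$-part, then $b$ lies in a copy of $G$ that uses at most $k-1$ vertices of $H$ besides $x$. The isolated edge of that copy of $G+e$ is an edge of $H$ disjoint from it, and this edge must meet $V(G_0)$.
\end{itemize}

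In both cases I would try to show that $b$ together with its edges is confined to at most $k$ vertices determined by a bounded family of copies of $G$. One would then argue that too many vertices of $B$ force an edge disjoint from some copy of $G$, that is, a copy of $G+e$. Alternatively, one could use minimality: replace $H$ by the graph on $V(G_0)$ with the rest isolated, and greedily complete it to a saturated graph using only edges meeting $V(G_0)$ with at most $k$ new neighbours per vertex. This would contradict the minimality of $H$ if $|B|>k^2-k$.

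This confinement step is the main obstacle, and I expect that choosing the right auxiliary copies of $G$ is the crux. Once it is in place, $H$ has at most $k^2$ non-isolated vertices, so it has at most $\binom{k^2}{2}=O(k^4)$ edges, which gives $\mathrm{sat}_{\shortto}(n,G+e)=O(k^4)$.
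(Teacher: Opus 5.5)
Your first two steps (extracting a copy $G_0$ of $G$ from a saturated pair of isolated vertices, and deducing that every edge of $H$ meets $V(G_0)$) agree with the paper. The gap is the step you flag yourself: nothing in the proposal bounds the non-isolated vertices outside $V(G_0)$. The two facts you derive from a pair $\{b,x\}$ with $b\in B$ are correct but local to $b$, and no bound on $|B|$ follows from them.

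Your fallback also fails as stated, because a greedy completion of $G_0$ plus isolated vertices need not give each vertex of $G_0$ at most $k$ new neighbours. For example, let $G=T$ be a transitive tournament on $r\ge 3$ vertices, so that $\mathrm{sat}_{\shortto}(n,T+e)=O(1)$ by Claim~\ref{claim:e+acyclic}. Suppose the greedy process first tries $\orarrow{u_1x}$ for a fixed $u_1\in V(G_0)$ and every isolated $x$. It never creates $T+e$: pendant vertices cannot lie in a copy of $T$, and every edge meets $V(G_0)$. So it adds $n-r$ edges at $u_1$. If you instead cap the number of new neighbours, the remaining pairs $\{u_1,x\}$ stay unsaturated. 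The new edges therefore cannot be chosen blindly. They must come from a bounded gadget that saturates all pairs $\{u,x\}$ at once.

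The missing idea, which is the paper's route, is to apply saturation to pairs $\{u,v\}$ with $u\in V(G_0)$ and $v$ isolated, rather than to pairs at $b\in B$. The copy of $G+e$ created by adding $\orarrow{uv}$ has at most $k-1$ vertices outside $V(G_0)\cup\{v\}$:
\begin{itemize}
\item If $\orarrow{uv}$ plays $e$, the $G$-part meets $V(G_0)$, because $V(H)\setminus V(G_0)$ is independent.
\item If $\orarrow{uv}$ lies in the $G$-part, that part has at most $k-2$ further vertices outside $V(G_0)$. The isolated edge of the copy is an edge of $H$, so it meets $V(G_0)$.
\end{itemize}

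Let $F$ be the union of $G_0$ and one such copy for each $u$ (to be careful, one for each orientation). Then $F$ has $O(k^2)$ vertices. Moreover, $F$ together with isolated vertices is saturated except possibly at pairs inside $V(F)$:
\begin{itemize}
\item Pairs with both ends outside $V(G_0)$ are saturated by $G_0$ itself.
\item A pair $\{u,x\}$ with $x$ isolated is saturated by moving the copy for $\{u,v\}$ along the automorphism that swaps $x$ and $v$.
\end{itemize}
Filling in the boundedly many pairs inside $V(F)$ greedily keeps all non-isolated vertices in $V(F)$. This gives a saturating graph with $O(k^2)$ non-isolated vertices, hence the $O(k^4)$ bound. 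Without this choice of auxiliary copies, which is exactly what you left open, the proposal does not prove the lemma.
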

\begin{proof}
    Let $H$ be a host graph with a bounded number of edges and fix an isolated vertex $v$. If we connect it to another isolated vertex, we obtain in the rest of $H$ a copy of $G$, which we denote by $G'$. Since $H$ is $(G+e)$-free, $V(H) \setminus V(G')$ spans an independent set. Therefore, every edge in $H$ contains at least one vertex in $G'$.

    Let $u \in V(G')$ and add the edge $\orarrow{uv}$ to $H$. We then obtain a copy of $G+e$, which has at most $k-1$ non-isolated vertices outside of $G'$. Indeed, if $\orarrow{uv}$ plays the role of the isolated edge $e$, then at least one vertex of the copy of $G$ must be in $G'$, so at most $k-1$ of its vertices lie outside $V(G')$. On the other hand, if $\orarrow{uv}$ plays the role of an edge from $G$, then there are at most $k-2$ vertices of $G$ and one vertex of $e$ outside of $G'$, as desired.
    
    By repeating this argument for every vertex in $G'$, we get at most $k(k-1)$ vertices outside $V(G')$. Altogether, there are at most $k^2$ non-isolated vertices, including $V(G')$.
\end{proof}

In fact, if we consider the graphs $T+e$ where $T$ is a tournament, we can have a more precise description of the host graphs.

\begin{lem} \label{r+1 noniso}
    Let $T$ be a tournament on $r\ge 2$ vertices. If $\mathrm{sat}_{\shortto}(n,T + e) = O(1)$, then for large enough $n$, its host graphs with $n$ vertices and $\mathrm{sat}_{\shortto}(n,T + e)$ edges have exactly $r+1$ non-isolated vertices that span a tournament, and every subset of $r$ vertices of this tournament spans a copy of $T$. Consequently, $\mathrm{sat}_{\shortto}(n,T + e) = \binom{r+1}{2}$.
\end{lem}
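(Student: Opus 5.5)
We follow the structure of the proof of Lemma \ref{k4_bounded}, but exploit that $T$ is a tournament to pin down the exact shape of a minimal host graph. Let $H$ be a host graph with $n$ vertices and $\mathrm{sat}_{\shortto}(n,T+e)$ edges. By Lemma \ref{k4_bounded}, for large $n$ the graph $H$ has many isolated vertices.

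\textbf{Step 1: find a copy of $T$.} Joining two isolated vertices $x,y$ creates a copy of $T+e$. Since $r\ge 2$, every vertex of $T$ has positive degree, so the new edge $xy$ cannot belong to the $T$-part. Hence $xy$ plays the role of $e$, and $H$ contains a copy $T'$ of $T$ on some vertex set $S$ with $|S|=r$.

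\textbf{Step 2: $H-S$ has no edges.} Since $H$ is $(T+e)$-free, every edge of $H$ meets $S$.

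\textbf{Step 3: some vertex outside $S$ sees all of $S$.} Take an isolated vertex $v$ and $u\in S$, and add $\orarrow{uv}$. This creates a copy of $T+e$.
\begin{itemize}
\item If the new edge plays the role of $e$, then there is a copy of $T$ in $H-u$. This copy has $r$ pairwise adjacent vertices, all of which must avoid $u$.
\item If the new edge lies in the $T$-part, then $v$ has degree $r-1$ in that copy. This is impossible, because $v$ has only one edge, so this case requires $r-1\le 1$, i.e.\ $r=2$.
\end{itemize}
The case $r=2$ should be handled directly: there $T$ is a single edge and $T+e$ is a matching with two edges, whose saturated graphs are a triangle or a star. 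I expect a minimum of $3=\binom{3}{2}$ edges, but this small case needs a separate check.

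For $r\ge 3$, we get a tournament copy $T''$ avoiding $u$. It cannot use two vertices outside $S$, by Step 2. So $T''$ uses $r-1$ vertices of $S\setminus\{u\}$ and exactly one vertex $w\notin S$ that is adjacent to all of $S\setminus\{u\}$.

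\textbf{Step 4: show $w$ is also adjacent to $u$.} This is the key step. We want the vertices $S\cup\{w\}$ to span a tournament $K$ on $r+1$ vertices. Suppose instead that $w$ and $u$ are non-adjacent. Then add the edge between them with either orientation. The resulting copy of $T+e$ needs an edge disjoint from its $T$-copy; every edge meets $S\cup\{w\}$. I would derive a contradiction by counting: a $T$-copy together with a disjoint edge needs $r+2$ vertices, but all non-isolated vertices adjacent to two others essentially lie in $S\cup\{w\}$.

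This is the main obstacle. We must control vertices outside $S\cup\{w\}$ that are attached to $S$, and show that such vertices can be deleted or are not needed. For this we would use minimality: if $K$ is a tournament on $r+1$ vertices all of whose $r$-subsets span $T$, then $K$ plus isolated vertices is itself $(T+e)$-saturated.
\begin{itemize}
\item It is $(T+e)$-free, since there are only $r+1$ non-isolated vertices.
\item Adding an edge between two isolated vertices gives $T+e$.
\item Adding an edge from $z\in K$ to an isolated vertex gives $T+e$ via $K-z$.
\end{itemize}
So it suffices to show that any host graph contains such a $K$ and has at least $\binom{r+1}{2}$ edges. That already yields $\mathrm{sat}=\binom{r+1}{2}$ once existence is known.

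\textbf{Step 5: conclude.} To get that every $r$-subset spans $T$, apply Step 3 to every $u\in S\cup\{w\}$, using the now-established tournament $K$. To get exactly $r+1$ non-isolated vertices, use edge minimality: $K$ alone already has $\binom{r+1}{2}$ edges, and any extra non-isolated vertex adds at least one more edge.
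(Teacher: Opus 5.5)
Your Steps 1--3 are correct and match the start of the paper's proof: for each $u\in S$ you get a vertex $w_u\notin S$ such that $(S\setminus\{u\})\cup\{w_u\}$ spans a copy of $T$. The genuine gap is Step 4, which you flag as the main obstacle and which carries the whole content of the lemma. Your counting sketch presupposes that ``all non-isolated vertices adjacent to two others essentially lie in $S\cup\{w\}$'', and that is exactly what has to be proved. Adding the edge between $w$ and $u$ gives no contradiction by itself, since it may play the role of $e$ against a copy of $T$ on $(S\setminus\{u\})\cup\{w'\}$ for some other outside vertex $w'$. Your minimality fallback is circular. It only helps once you know that every minimum host graph contains a tournament $K$ on $r+1$ vertices all of whose $r$-subsets span $T$, and that existence statement is what Step 4 was supposed to deliver. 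Step 5 has the same hidden assumption, namely that the vertex $w_u$ from Step 3 is the same for every $u$.

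The missing idea is to compare $w_u$ and $w_{u'}$ for two distinct $u,u'\in S$. Suppose $w_u\ne w_{u'}$. Then $w_u$ is adjacent to $u'$, because $u'\in S\setminus\{u\}$. This edge is vertex-disjoint from the copy of $T$ on $(S\setminus\{u'\})\cup\{w_{u'}\}$, since $w_u\notin S$ and $w_u\ne w_{u'}$, so $H$ already contains $T+e$, a contradiction. Hence all $w_u$ are a single vertex $w$. For any $u\in S$, picking $u'\ne u$ shows that $w=w_{u'}$ is adjacent to $u$, so $S\cup\{w\}$ is a tournament whose every $r$-subset spans $T$. Any further edge would have the form $zx$ with $z\in S$ and $x\notin S\cup\{w\}$, because $V(H)\setminus S$ is independent. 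Such an edge is disjoint from the copy of $T$ on $(S\setminus\{z\})\cup\{w\}$, so there are no further edges. The structure and the value $\binom{r+1}{2}$ then follow directly, without any minimality argument.

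Finally, $r=2$ needs no separate check. If $\orarrow{uv}$ plays the role of $T$, the $e$-part is an edge of $H-u$, hence of the form $u'x$ with $x\notin S$. Then $\{u',x\}$ itself spans a copy of $T$, and you get $w_u=x$ exactly as before. For the same reason your justification in Step 1 fails when $r=2$, although its conclusion still holds.
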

\begin{proof}
    Let $H$ be a host graph with a bounded number of edges. By assumption, there exists a subgraph $T' \subset H$ that is isomorphic to $T$. If the host graph has at most $r$ non-isolated vertices, then it cannot saturate edges between an isolated vertex and a non-isolated vertex.
    Note that the vertices $V(H)\setminus V(T')$ must span an independent set, therefore any copy of $T$ can have at most one vertex from this set.

    Fix an isolated vertex $w$. For every $u \in V(T')$, adding the edge $\orarrow{uw}$ creates a copy of $T + e$ such that $T$ is isomorphic to the graph spanned over the vertices of $T'-u$ and one of the vertices $v_u \notin V(T')$. 

    If for every $u\in V(T')$ these $v_u$ vertices are all the same, then we get a tournament on $r+1$ vertices such that every subset of $r$ vertices spans a copy of $T$. Notice that there are no further edges in the graph, as we would then get a copy of $T+e$.

    Otherwise, assume there exist $u, u' \in V(T')$ such that $v_u\ne v_{u'}$. In this case, $v_u$ is connected to every vertex of $T'-u$, thus also to $u'$. The edge between $u'$ and $v_u$ together with the copy of $T$ on $V(T'-u')\cup \{v_{u'}\}$ forms a copy of $T+e$, which is a contradiction.
\end{proof}

\begin{cor} \label{subgraph of a tournament}
    Let $T$ be a tournament on $r$ vertices, and let $S$ be a spanning subgraph of $T$. If $\mathrm{sat}_{\shortto}(n,T + e) = O(1)$, then $\mathrm{sat}_{\shortto}(n,S + e) = O(1)$.
\end{cor}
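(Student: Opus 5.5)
Take the bounded host graph for $T+e$ supplied by Lemma~\ref{r+1 noniso}: for large $n$ it is a tournament $K$ on $r+1$ vertices in which every $r$-subset spans a copy of $T$, together with $n-r-1$ isolated vertices. I will build a host graph for $S+e$ from this template, with $O(r^2)$ edges independent of $n$. The natural candidate is to start from $K$, delete edges to get an oriented graph $K_S$ on the same $r+1$ vertices in which every $r$-subset still contains a copy of $S$, and keep the rest isolated. Concretely, for each vertex $x$ of $K$ fix an isomorphism of $K-x$ with $T$, and let $K_S$ be the union of the images of $S$ under these $r+1$ isomorphisms. Every $r$-subset of $V(K)$ then spans a copy of $S$, and every edge of $K_S$ lies in some such copy.

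Next I add edges greedily, as in the proof of Lemma~\ref{dichotomy_isolated_edge}. While some pair of vertices has an orientation whose addition creates no copy of $S+e$, I add that edge; the final graph $H$ is $(S+e)$-saturated if it is $(S+e)$-free. The crucial point is that this never touches the isolated vertices. Adding an edge between two isolated vertices creates $S+e$ together with the copy of $S$ inside $K_S$. Adding an edge $\orarrow{xw}$ or $\orarrow{wx}$ with $x\in V(K)$ and $w$ isolated creates $S+e$ together with the copy of $S$ on $V(K)\setminus\{x\}$, which is disjoint from the new edge. So all additions happen inside $V(K)$, and $H$ has at most $\binom{r+1}{2}$ edges, giving $\mathrm{sat}_{\shortto}(n,S+e)=O(1)$.

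The main obstacle is checking that $K_S$ is $(S+e)$-free; greedy additions preserve freeness by definition. Every edge of $K_S$ lies inside $V(K)$, and a copy of $S+e$ needs $r+2$ non-isolated vertices when $S$ has no isolated vertices. In general a copy of $S$ could use isolated vertices, but its edges still lie in $V(K)$, so a copy of $S+e$ needs an edge $e'$ and a copy of $S$ vertex-disjoint from it. The edges of that copy of $S$ then span at most $r-1$ vertices of $K$. I would argue this is impossible exactly as $T+e$-freeness of $K$ is: the copy of $S$ together with $e'$ sits inside $K$ plus isolated vertices, and $K$ itself is $(T+e)$-free.

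However, this does not transfer automatically: $S$ may have isolated vertices, in which case $K$ can contain $S+e$ even though it has no $T+e$. If that happens, I would first strip the isolated vertices of $S$ using the first lemma of the paper, and then treat $S$ without isolated vertices. In that case $S+e$ needs $r+2$ vertices incident to edges, while $K_S$ has only $r+1$. The remaining edge case, $r+1<r+2$, is automatic, so the argument closes.
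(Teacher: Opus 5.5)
Your argument is correct when $S$ has no isolated vertices, and in that case it is essentially the paper's, with a detour. The paper keeps the tournament $K$ itself, which already saturates $S+e$. It has only $r+1$ non-isolated vertices while $S+e$ has $r+2$, and every non-edge meets an isolated vertex, so adding it creates $T+e\supseteq S+e$. Passing to $K_S$ and completing greedily is harmless but unnecessary.

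The final paragraph, however, contains a false step, and this is a genuine gap. Stripping the isolated vertices of $S$ with the first lemma replaces $S$ by a graph $S'$ on $m<r$ vertices. Then $S'+e$ has only $m+2\le r+1$ non-isolated vertices, so the count ``$r+2$ versus $r+1$'' is no longer available. Moreover, $S'$ is a spanning subgraph only of the subtournament $T[V(S')]$, about which the hypothesis says nothing.

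No argument can repair this case, because the statement fails there. Let $X_2$ be the vertex-transitive tournament on $v_1,\dots,v_5$ from the proof for $W_s$, and let $T=X_2-v_5$. Then $X_2$ plus isolated vertices shows $\mathrm{sat}_{\shortto}(n,T+e)\le 10$. Now $T$ contains the oriented triangle $C$ given by $v_1\to v_2\to v_4\to v_1$. Take $S$ to be $C$ together with the isolated vertex $v_3$, a spanning subgraph of $T$. The first lemma gives $\mathrm{sat}_{\shortto}(n,S+e)=\mathrm{sat}_{\shortto}(n,C_3+e)$. This is $\Theta(n)$ by Lemma~\ref{r+1 noniso}: boundedness would require a tournament on four vertices all of whose triples are cyclic. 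That is impossible, since some vertex has out-degree at least $2$ and forms a transitive triple with two of its out-neighbours.

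So the corollary has to be read under the convention, announced after the first lemma, that graphs have no isolated vertices. This is exactly what the paper's proof uses when it says that $S+e$ has $r+2$ non-isolated vertices. You should restrict to that case rather than claim the general one.
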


\begin{proof}
    By Lemma \ref{r+1 noniso} large enough host graphs $H$ of $T+e$ have at most $r+1$ non-isolated vertices. Therefore, they also do not contain $S$, which has $r+2$ non-isolated vertices. On the other hand, adding any edge to $H$ introduces a copy of $T+e$, and thus also of $S+e$.
\end{proof}

This corollary gives us a way to find many examples with a bounded saturation function. However, if the saturation function of $T+e$ is linear, it remains unclear how the saturation function of $S+e$ behaves.

\begin{cor} [Algorithmic characterization]
    We observe that Lemmas \ref{k4_bounded} and \ref{r+1 noniso}, together with the dichotomy obtained in Lemma \ref{dichotomy_isolated_edge}, provide an algorithmic characterization of the saturation functions of oriented graphs of the form $G+e$. Namely, if $G$ has $k$ vertices, we can check all oriented graphs on $n$ vertices with at most $k^2$ non-isolated vertices to determine whether any of them saturates $G+e$. If such a graph exists, then the algorithm also determines the exact saturation function. Otherwise, we conclude that the saturation function is linear. Note that this algorithm is exponential in $k$. 
    
    For a tournament $T$, this algorithm can be simplified. It suffices to take a copy of $T$ and add a single vertex, considering every possible orientation of the edges between this vertex and the vertices of $T$.
\end{cor}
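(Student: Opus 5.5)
The "statement" to justify is the algorithmic characterization, and it splits into a correctness argument for the general procedure and a justification of the simplification for tournaments.

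\textbf{Correctness of the general procedure.} Fix $G$ on $k$ vertices. By Lemma \ref{dichotomy_isolated_edge}, $\mathrm{sat}_{\shortto}(n,G+e)$ is either $O(1)$ or $\Theta(n)$. In the bounded case, Lemma \ref{k4_bounded} says that an optimal host graph on $n$ vertices has at most $k^2$ non-isolated vertices. Such a host graph is therefore determined, up to isomorphism, by an oriented graph $F$ on at most $k^2$ vertices together with $n-|V(F)|$ isolated vertices.

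\textbf{Reducing to a single large $n$.} The key step is that this saturation property becomes independent of $n$ once $n$ is large, say $n\ge k^2+2k$. Freeness of $G+e$ depends only on $F$ plus a bounded number of isolated vertices, since a copy of $G+e$ uses at most $k+2$ vertices. Likewise, whether adding a non-edge creates a copy depends only on the type of the non-edge. There are three types: inside $F$, between $F$ and an isolated vertex, and between two isolated vertices. For each type, the witnessing copy uses at most $k+2$ vertices, so having more than $k+2$ isolated vertices available changes nothing.

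It follows that a finite check over all $F$ on at most $k^2$ vertices, at one fixed large $n_0$, decides boundedness. If some $F$ works, the minimum number of edges over the valid $F$ gives the exact value for all large $n$. If none works, the function is not bounded, and so by the dichotomy it is linear. This is exactly the claimed algorithm, and its running time is exponential in $k$ (indeed $2^{O(k^4)}$ candidates).

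\textbf{The tournament case.} Here I would invoke Lemma \ref{r+1 noniso}: a bounded-size host graph is a tournament on $r+1$ vertices containing a copy of $T$. So it suffices to take $T$, add one vertex $x$, and try the $2^r$ orientations of the edges from $x$ to $V(T)$. For each orientation, check that every $r$-subset spans a copy of $T$.

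Sufficiency of this condition must also be verified, and I expect it to be the main (mild) obstacle. Let $R$ be such a tournament on $r+1$ vertices, padded with isolated vertices.
\begin{itemize}
\item $G+e$-freeness: this holds because $T+e$ has $r+2$ non-isolated vertices, while the host has only $r+1$.
\item An edge between two isolated vertices: $R$ itself contains a copy of $T$ disjoint from the new edge.
\item An edge between $u\in V(R)$ and an isolated vertex: $R-u$ is a copy of $T$, and the new edge is disjoint from it.
\end{itemize}
There are no non-edges inside $R$. Hence saturation holds exactly under the stated condition, which justifies the simplified check.
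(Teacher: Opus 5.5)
Your proposal is correct and follows the same route as the paper: it combines the dichotomy of Lemma~\ref{dichotomy_isolated_edge} with the structural bounds of Lemmas~\ref{k4_bounded} and~\ref{r+1 noniso} to reduce to a finite search. You also correctly make explicit two points the paper leaves implicit: that saturation by a candidate $F$ padded with isolated vertices no longer depends on $n$ once there are at least $k+2$ isolated vertices (so one check at a fixed $n_0$ decides boundedness and gives the exact value), and that in the tournament case the condition on the $(r+1)$-vertex tournament is also sufficient.
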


As stated by Lemma \ref{dichotomy_isolated_edge}, we have a dichotomy for saturation functions of oriented graphs containing an isolated edge. Using Lemma \ref{r+1 noniso}, we can find a simple algorithm to determine the order of magnitude of saturation functions for many small tournaments. For a tournament on $r$ vertices, it suffices to take a tournament on $r+1$ vertices, orient its edges in every possible way, and check whether each subtournament on $r$ vertices is isomorphic to the initial tournament. However, we do not make such an attempt here. Instead, we identify some general families that have bounded or linear saturation functions.

\begin{claim} \label{claim:e+acyclic}
    Let $G$ be an acyclic oriented graph. Then \[\mathrm{sat}_{\shortto}(n,G + e) = O(1).\]
\end{claim}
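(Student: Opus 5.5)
The plan is to embed $G$ into a transitive tournament and use a transitive tournament on one more vertex as the host graph, much as in Lemma \ref{r+1 noniso} and Corollary \ref{subgraph of a tournament}.

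\textbf{Reductions.} By the lemma on isolated vertices, $\mathrm{sat}_{\shortto}(n,G+e)$ does not change, for large $n$, if we delete the isolated vertices of $G$. So we may assume $G$ has no isolated vertices. If $G$ then has no vertices at all, $G+e$ is a single edge. In that case the empty graph saturates it, so the function is $0$.

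Otherwise let $k=|V(G)|\ge 2$. Since $G$ is acyclic, it has a topological ordering $x_1,\dots,x_k$ in which every edge goes from a smaller to a larger index. Hence $G$ is a spanning subgraph of the transitive tournament $T_k$ on this ordering.

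\textbf{Construction.} Let $H$ consist of a transitive tournament $T_{k+1}$ on vertices $y_1,\dots,y_{k+1}$, with $\orarrow{y_iy_j}$ for $i<j$, together with $n-k-1$ isolated vertices. It has $\binom{k+1}{2}$ edges, which is bounded.

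\textbf{$H$ is $(G+e)$-free.} Every vertex of $G+e$ is non-isolated, because $G$ has no isolated vertices. A copy of $G+e$ would therefore need $k+2$ distinct non-isolated vertices in $H$. But $H$ has only $k+1$ of them.

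\textbf{Saturation.} Consider adding a new edge $f$ to $H$. Every pair inside $T_{k+1}$ is already an edge, so $f$ has at least one isolated endpoint. There are two cases.
\begin{itemize}
\item If both endpoints of $f$ are isolated, take $y_1,\dots,y_k$. These span a transitive tournament containing a copy of $G$ via $x_i\mapsto y_i$. Together with $f$, which is vertex-disjoint from it, this gives a copy of $G+e$.
\item If $f$ joins some $y_j$ to an isolated vertex $w$, in either orientation, delete $y_j$. The remaining vertices $T_{k+1}-y_j$ still form a transitive tournament on $k$ vertices, since any $k$-subset of a transitive tournament is transitive. It therefore contains a copy of $G$ avoiding both $y_j$ and $w$. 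Together with $f$ as the isolated edge, this is a copy of $G+e$.
\end{itemize}
Hence $H$ saturates $G+e$, and $\mathrm{sat}_{\shortto}(n,G+e)\le\binom{k+1}{2}=O(1)$.

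\textbf{Main obstacle.} The only delicate point is the freeness argument, which relies on $G$ having no isolated vertices. Without the reduction step, isolated vertices of $G$ could be mapped to isolated vertices of $H$, and $G+e$ might then fit inside $H$. That is why the isolated vertices of $G$ are removed first. One could also appeal to Corollary \ref{subgraph of a tournament} with $T=T_k$, after checking $\mathrm{sat}_{\shortto}(n,T_k+e)=O(1)$ via the construction above. However, the direct verification avoids relying on that corollary's vertex count in the presence of isolated vertices of $S$.
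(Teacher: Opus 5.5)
Your proof is correct and follows the paper's argument. The host graph is a transitive tournament on $k+1$ vertices plus isolated vertices, and every non-edge has an isolated endpoint, so removing the other endpoint (when it lies in the tournament) leaves a transitive $k$-tournament containing $G$. Your explicit reduction to $G$ without isolated vertices is a worthwhile addition, because it is exactly what makes the paper's terse freeness step ``$|V(H)|<|V(G+e)|$'' valid. Without it the construction fails: for $G$ an edge plus an isolated vertex, two disjoint edges of $T_4$ together with an isolated vertex of $H$ already form a copy of $G+e$.
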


\begin{proof}
    Assume $G$ has $k \geq 2$ vertices. Then we take a host graph $H$ to be a transitive tournament on $k+1$ vertices, and the remaining $n-k-1$ vertices are isolated. Since $|V(H)| < |V(G+e)|$, $H$ avoids $G+e$. Let $u, v \in V(H)$ be such that $uv \notin E(H)$. Then we can assume that one of them is isolated, say $u$. Now, we can observe that $H - v$ contains a transitive tournament of size $k$. Therefore, since $G$ is a spanning subgraph of a transitive tournament, $H - v$ also contains $G$. By taking $e = uv$, we conclude that $H + uv$ contains $G + e$.
\end{proof}

\begin{claim}
    Let $T$ be a tournament with outdegree sequence $d_1 \leq d_2 \leq \dots \leq d_r$. If there exists $k \in \{1, \dots, r-1\}$ such that $d_k + 2 \leq d_{k+1}$, then \[\mathrm{sat}_{\shortto}(n,T + e) = \Theta(n).\]
\end{claim}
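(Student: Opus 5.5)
By Lemma \ref{dichotomy_isolated_edge}, $\mathrm{sat}_{\shortto}(n,T+e)$ is either $O(1)$ or $\Theta(n)$. So it suffices to exclude the bounded case. Assume, for contradiction, that $\mathrm{sat}_{\shortto}(n,T+e)=O(1)$. Then Lemma \ref{r+1 noniso} applies: for large $n$, an extremal host graph consists of a tournament $R$ on $r+1$ vertices plus isolated vertices, and every $r$-subset of $V(R)$ spans a copy of $T$. We will show that such an $R$ cannot exist when the outdegree sequence of $T$ has a gap $d_{k+1}-d_k\ge 2$.

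Let the outdegrees of $R$ be $D_1\le D_2\le\dots\le D_{r+1}$. Deleting a vertex $x$ gives the tournament $R-x$. Its outdegree sequence arises from that of $R$ by removing $d^+(x)$ and decreasing by one the outdegree of each in-neighbour of $x$. Each such sequence must equal $(d_1,\dots,d_r)$. We use this in two steps.

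\textbf{Step 1 (remove a vertex of maximum outdegree).} Let $x$ have outdegree $D_{r+1}$. Its in-neighbours are exactly the vertices with outdegree at least one more than the outdegree forced by the ordering. The main tool is a counting argument on sums. The total outdegree of $R$ is $\binom{r+1}{2}$ and that of $T$ is $\binom r2$. Hence $D_x$, the outdegree of the deleted vertex, satisfies $\binom{r+1}{2} - D_x - \mathrm{indeg}(x) = \binom r2$. This gives $D_x+\mathrm{indeg}(x)=r$, which is automatic. So sums alone give nothing, and we must compare the sequences position by position instead.

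\textbf{Step 2 (positionwise comparison).} Let $A=\{$vertices of $R$ with outdegree $\le d_k+1\}$ and $B$ its complement. For any deleted vertex $x$, every remaining vertex keeps its outdegree or loses one. Moreover, the multiset of outdegrees in $R-x$ has exactly $k$ values $\le d_k$ and $r-k$ values $\ge d_k+2$, with nothing equal to $d_k+1$. A vertex of outdegree $d_k+1$ in $R$ must therefore be an in-neighbour of every other vertex $x$. Two such vertices would be in-neighbours of each other, which is impossible. So there is at most one such vertex. We then distinguish two cases.

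\emph{Case (a): no vertex of outdegree $d_k+1$.} Deleting $x$ must leave exactly $k$ vertices of low outdegree ($\le d_k$), and vertices with outdegree $\ge d_k+2$ stay $\ge d_k+1$, hence in the high class. So $|A|$ equals $k$ or $k+1$. If $|A|=k$, delete a vertex $x\in A$: only $k-1$ low vertices remain, a contradiction. If $|A|=k+1$, delete $x\in B$: $k+1$ low vertices remain, again a contradiction, since $B$ is nonempty because $k\le r-1$.

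\emph{Case (b): exactly one vertex $y$ of outdegree $d_k+1$.} Here $y$ beats every other vertex, so $d^+(y)=r$ and $d_k=r-1$. This is impossible, since then $d_{k+1}\ge r+1>r-1$, while outdegrees in $T$ are at most $r-1$.

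Both cases give a contradiction, so the bounded case is excluded and $\mathrm{sat}_{\shortto}(n,T+e)=\Theta(n)$. The main obstacle is the bookkeeping in Case (a). We must make sure that deleting a vertex from $B$ leaves all of $A$ low (it does: outdegrees only decrease) and that high vertices stay strictly above $d_k$. This is exactly where the gap of at least $2$ is used.
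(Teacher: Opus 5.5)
Your proof is correct, but it organizes the contradiction differently from the paper.

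\textbf{The paper's route.} It fixes one vertex $u$ of the $(r+1)$-vertex tournament and labels the remaining vertices $v_1,\dots,v_r$ by their roles in the copy of $T$ they span. It then uses just two deletions.
\begin{itemize}
\item Removing $v_k$: the vertices $v_{k+1},\dots,v_r$ keep outdegree at least $d_{k+1}-1\ge d_k+1$, so they fill all $r-k$ high slots. This forces $u$ into a low slot and gives $d^+(u)\le d_k+1$.
\item Removing $v_{k+1}$: the vertices $v_1,\dots,v_k$ have outdegree at most $d_k+1<d_{k+1}$, so they fill all $k$ low slots. This forces $u$ into a high slot and gives $d^+(u)\ge d_{k+1}$.
\end{itemize}

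\textbf{Your route.} You classify vertices by their outdegree in $R$ itself. You dispose of the boundary value $d_k+1$ separately: such a vertex must beat all others, which forces $d_k=r-1$. You then use that the low/high partition $A\cup B$ survives every deletion, so $|A\setminus\{x\}|=k$ for every $x$. This is contradictory once $A$ and $B$ are both nonempty, and they are, since $|A|\in\{k,k+1\}$ with $1\le k\le r-1$.

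\textbf{Comparison.} Both arguments rest on the same fact: a deletion lowers each outdegree by at most one, so a gap of $2$ in the sequence cannot be crossed. The paper's version avoids a case split and needs only two specific deletions. Yours needs no reference copy and yields slightly more structure, namely that the low/high classes of $R$ are invariant under every deletion.

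Your Step~1 is a dead end, as you note yourself, and can be dropped. Its sentence describing the in-neighbours as ``the vertices with outdegree at least one more than the outdegree forced by the ordering'' is not meaningful as stated and is never used.
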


\begin{proof}
    By Lemma \ref{r+1 noniso}, it suffices to show that there is no tournament $T_{r+1}$ on $r+1$ vertices such that any subset of $r$ vertices induces a copy of $T$. Suppose such a tournament $T_{r+1}$ exists. Choose one vertex $u$, and label the remaining $r$ vertices $v_1, \dots, v_r$ so that they induce a copy of $T$ in which $v_i$ has outdegree $d_i$.

    Moreover, $T_{r+1}-v_k$ must induce a copy of $T$. We get this from the earlier copy of $T$ by removing $v_k$ and adding $u$. Note that no $v_i$ with $i > k$ can play the role of a vertex $v_j$ with $j \leq k$, since deleting a single vertex decreases the outdegree of every remaining vertex by at most one. Therefore, after removing $v_k$, the vertex $u$ must play the role of a vertex with outdegree at most $d_k$. Consequently, $d^+(u)\le d_k+1$.
    
    Similarly, in the copy of $T$ induced by $T_{r+1}-v_{k+1}$, the vertex $u$ must correspond to a vertex with outdegree at least $d_{k+1}$. In particular, $d^+(u)\geq d_{k+1}$.
    We get that \[d_{k+1}\le d^+(u)\leq d_k+1<d_{k+1},\] 
    a contradiction. 
\end{proof}

\begin{defi}
    An oriented graph $G$ is \emph{deletion-homogeneous} if, for any pair of vertices $u, v$, the graphs $G - u$ and $G - v$ are isomorphic.
\end{defi}

Any vertex-transitive oriented graph is deletion-homogeneous. Note that transitive tournaments are deletion-homogeneous but not vertex-transitive. They are not the only examples with this property. Figure \ref{6 vertices del hom} shows a tournament on six vertices that is deletion-homogeneous yet not vertex-transitive. It would be interesting, as an independent question, to explore these graphs further. This is also motivated by the next statement.

\begin{figure}[ht]
	\centering
	\includegraphics[width=0.25\textwidth]{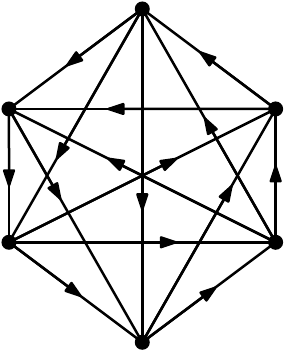}
	\caption{\label{6 vertices del hom} A deletion-homogeneous tournament which is not vertex-transitive.}
\end{figure}

\begin{claim}
    Let $r \geq 1$ and let $G$ be a deletion-homogeneous graph on $r+1$ vertices. If $G'$ is a subgraph of $G$ on $r$ non-isolated vertices, then $\mathrm{sat}_{\shortto}(n,G' + e) = O(1)$.
\end{claim}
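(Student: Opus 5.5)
The plan is to mimic the proof of Claim \ref{claim:e+acyclic}, using $G$ itself as the dense part of the host graph. Let $H$ consist of a copy of $G$ on $r+1$ vertices together with $n-r-1$ isolated vertices. Then $H$ has $|E(G)|=O(1)$ edges. It remains to check that $H$ avoids $G'+e$ and that adding any missing edge creates a copy of $G'+e$.

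For the avoidance part, I count vertices. The graph $G'+e$ has $r$ non-isolated vertices from $G'$ plus the two endpoints of $e$, so $r+2$ non-isolated vertices in total. Every vertex of $G'$ is non-isolated in $G'$ (this is where the hypothesis that $G'$ has $r$ non-isolated vertices is used). In $H$ only the $r+1$ vertices of the copy of $G$ can be non-isolated. Hence $H$ contains no copy of $G'+e$.

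For the saturation part, take a non-adjacent pair $\{x,y\}$ in $H$ and add the new edge, in either orientation. Two cases arise.
\begin{itemize}
\item If both $x,y$ lie outside the copy of $G$, delete an arbitrary vertex $z$ of the copy. The remaining $r$ vertices span a copy of $G-z$, which contains $G'$, provided $G'$ is a subgraph of $G-z$. The new edge $xy$ then plays the role of $e$.
\item Otherwise exactly one endpoint, say $x$, lies in the copy of $G$. The other endpoint is then isolated, since non-edges inside the copy of $G$ could also exist; those are treated below. Consider $G-x$. Since $G$ is deletion-homogeneous, $G-x\cong G-w$ for every vertex $w$. In particular $G-x$ contains $G'$, using that $G'$ sits inside $G$ on $r$ vertices, i.e.\ $G'\subseteq G-w$ for the unique $w\notin V(G')$. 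So $G-x$ together with the new edge $xy$ gives $G'+e$, and this works for either orientation of $xy$.
\end{itemize}

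The remaining issue, and the main obstacle, is a non-edge $\{x,y\}$ with both endpoints inside the copy of $G$, since $G$ need not be a tournament. Deleting $x$ leaves only $r$ vertices, and $y$ is among them, so the new edge cannot be disjoint from a copy of $G'$ inside the copy of $G$. I would avoid this case entirely by choosing the host graph more carefully. The first option is to assume $G$ is completed to a tournament. Deletion-homogeneity may not survive completion, however. The cleaner fix is to first add edges greedily inside $V(G)$ while no copy of $G'+e$ appears, which is automatic because only $r+1$ vertices are ever non-isolated. Concretely, replace the copy of $G$ by a tournament $T\supseteq G$ on the same vertex set. Then $T$ still avoids $G'+e$ by the same counting. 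The case of one endpoint outside only needs $T-x\supseteq G-x\supseteq$ a copy of $G'$, which still holds. The case of both endpoints outside is identical to the first case above. No non-edges inside $V(T)$ remain, so the argument is complete. The number of edges is $\binom{r+1}{2}=O(1)$.
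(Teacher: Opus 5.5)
Your proof is correct and uses the paper's construction: a copy of $G$ plus $n-r-1$ isolated vertices, avoidance because $G'+e$ has $r+2$ non-isolated vertices, and deletion-homogeneity to find a copy of $G'$ inside $G-x$ when the new edge meets $V(G)$. Your extra step of replacing $G$ by a tournament $T\supseteq G$ on the same vertex set is needed and is missing from the paper's proof: the paper only checks non-edges with an isolated endpoint, so non-edges inside $V(G)$ stay unsaturated when $G$ is not a tournament, and your counting argument shows the completion costs nothing.
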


\begin{proof}
    We can take a host graph $H$ of $G' + e$ to be $G$ and $n-r-1$ isolated vertices. Since $G' + e$ has $r+2$ non-isolated vertices, it is certainly avoided by this construction. Furthermore, by assumption, $G$ is deletion homogeneous, so $G - v$ is isomorphic to $G'$ for any vertex $v$. We conclude that by adding any edge $uv$, where $u$ is an isolated vertex of $H$, we introduce a copy of $G' + e$.
\end{proof}

\begin{defi}
    Let $G_1$ and $G_2$ be two oriented graphs. We denote by $G_1 \circ G_2$ the oriented graph obtained by replacing every vertex of $G_1$ with a copy of $G_2$, and if $\overrightarrow{uv} \in E(G_1)$ then $\overrightarrow{u'v'} \in E(G_1 \circ G_2)$ for all $u' \in G_2^u$ and $v' \in G_2^v$, where $G_2^u$ and $G_2^v$ are the copies of $G_2$ that replace $u$ and $v$.
\end{defi}

Building on the above mentioned deletion-homogeneous graphs, we present a general way to construct further examples.

\begin{prop}
    Let $G_1$ be a vertex-transitive graph and $G_2$ be a deletion-homo\-geneous graph. Then $G_1 \circ G_2$ is a deletion-homogeneous graph.
\end{prop}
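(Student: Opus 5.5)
We need to show that for any two vertices $x,y$ of $G_1\circ G_2$, the graphs $(G_1\circ G_2)-x$ and $(G_1\circ G_2)-y$ are isomorphic. The plan is to reduce the general pair to a pair lying in the same copy of $G_2$, using vertex-transitivity of $G_1$, and then handle that case using deletion-homogeneity of $G_2$. Write $x\in G_2^{a}$ and $y\in G_2^{b}$ for vertices $a,b\in V(G_1)$.

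First, since $G_1$ is vertex-transitive, there is an automorphism $\sigma$ of $G_1$ with $\sigma(a)=b$. I will lift it to an automorphism $\hat\sigma$ of $G_1\circ G_2$. Fix identifications of every copy $G_2^w$ with $G_2$, and let $\hat\sigma$ send the vertex of $G_2^w$ corresponding to $z\in V(G_2)$ to the vertex of $G_2^{\sigma(w)}$ corresponding to $z$. Edges inside a copy are preserved because each copy is mapped identically. Edges between copies $G_2^w$ and $G_2^{w'}$ are present, with a given orientation, exactly when $\overrightarrow{ww'}\in E(G_1)$, and $\sigma$ preserves this. Hence $\hat\sigma$ is an automorphism mapping $x$ to some $x'\in G_2^{b}$. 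It follows that $(G_1\circ G_2)-x\cong (G_1\circ G_2)-x'$, so it suffices to treat $x,y$ in the same copy $G_2^{b}$.

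Second, suppose $x,y\in G_2^{b}$. By deletion-homogeneity of $G_2$ there is an isomorphism $\varphi\colon G_2^b-x\to G_2^b-y$. Extend $\varphi$ by the identity on all other copies. Edges inside $G_2^b$ are handled by $\varphi$, and edges inside or between other copies are untouched. An edge between a vertex $z\in G_2^b\setminus\{x\}$ and a vertex $z'\in G_2^{w}$ with $w\neq b$ has orientation determined only by the edge $bw$ of $G_1$, independent of the choice of $z$. Since $\varphi(z)$ again lies in $G_2^b$, the edge is preserved with the same orientation. The extended map is therefore an isomorphism $(G_1\circ G_2)-x\to(G_1\circ G_2)-y$, and combining the two steps gives the claim.

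The only real care needed is in the first step: the lift $\hat\sigma$ must use consistent identifications of the copies so that the internal edges are preserved. Beyond that, the argument is direct, and the key structural point, used in both steps, is that edges between copies are fully determined by $G_1$.
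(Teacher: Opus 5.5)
Your proof is correct and follows the same two-case structure as the paper. Deletion-homogeneity of $G_2$, via an isomorphism $G_2^b-x\to G_2^b-y$ extended by the identity, handles two vertices in the same copy, and vertex-transitivity of $G_1$ handles vertices in different copies. Your explicit lift $\hat\sigma$ and the reduction to the same-copy case make rigorous what the paper only sketches; in particular, the paper's ``swapping the roles of $v_1$ and $v_2$'' should be read as your extension of $\varphi$ by the identity.
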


\begin{proof}
    Let $v_1, v_2 \in V(G_1 \circ G_2)$, and let $G'_2$ and $G''_2$ be two disjoint copies of $G_2$ in $G_1 \circ G_2$. We can distinguish two cases.
    
    First, suppose $v_1, v_2 \in V(G'_2)$. Then we can see that $G_1 \circ G_2 - v_1$ and $G_1 \circ G_2 - v_2$ are isomorphic because the graph $G_2$ is deletion-homogeneous and the isomorphism is given by swapping the roles of $v_1$ and $v_2$, while keeping the  other vertices the same. 
    
    Second, suppose $v_1 \in V(G'_2)$ and $v_2 \in V(G''_2)$. Since $G_2$ is deletion-homogeneous, we can find an isomorphism between $G'_2 - v_1$ and $G''_2 - v_2$. By the assumption that $G_1$ is vertex-transitive, we conclude that $G_1 \circ G_2 - v_1$ and $G_1 \circ G_2 - v_1$ are isomorphic.
\end{proof}

\subsection{Oriented graphs without isolated edges}

Recall that by Lemma \ref{dichotomy_isolated_edge}, when there are no isolated edges in the graph, the saturation function is at least linear. First, we focus on the case of acyclic oriented graphs and observe that we can adapt the proof of Pikhurko~\cite{Pikhurko99} in a straightforward way. We include the proof for the sake of completeness.

\begin{lem} \label{acyc_linear}
    For every acyclic oriented graph $G$ without isolated edges, we have 
    \[\mathrm{sat}_{\shortto}(n,G) = \Theta(n).\]
\end{lem}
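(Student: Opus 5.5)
The lower bound is immediate. $G$ has no isolated edge, so the second part of Lemma \ref{dichotomy_isolated_edge} gives $\mathrm{sat}_{\shortto}(n,G)\ge \mathrm{ssat}_{\shortto}(n,G)=\Omega(n)$. The work is in the linear upper bound. I would follow the scheme of Kászonyi--Tuza and Pikhurko~\cite{Pikhurko99} in three steps:
\begin{itemize}
\item[(i)] build a starting graph $H_0$ with $O(n)$ edges, consisting of a bounded ``core'' $A$ and a large independent set $W$;
\item[(ii)] show that $H_0$ is $G$-free and that adding any pair inside $W$, in either orientation, creates a copy of $G$;
\item[(iii)] complete $H_0$ greedily to a $G$-saturated graph $H$.
\end{itemize}
In step (iii), every edge added by the greedy process has an endpoint in $A$, because pairs inside $W$ are already saturated. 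Hence $H$ has at most $\binom{|A|}{2}+|A|\,n=O(n)$ edges, since $|A|$ depends only on $G$.

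For the core, I would use acyclicity of $G$. Fix a topological order of $V(G)$, so $G$ is a spanning subgraph of the transitive tournament $TT_k$ with $k=|V(G)|$. Choose an edge $\orarrow{uv}$ with $u,v$ consecutive in a suitable topological order, if one exists; otherwise pick an edge minimizing the number of vertices between its endpoints. Let $A$ be a copy of $G-\{u,v\}$. Every vertex of $W$ is joined to the vertices of $A$ that correspond to $N_G(u)\cup N_G(v)$. Each such edge is oriented forward if that vertex precedes $u,v$ in the order and backward if it follows them.

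The key point is that, for an acyclic $G$ and this choice of $\orarrow{uv}$, every common neighbour of $u$ and $v$ lies entirely before or entirely after both of them in the order. Then $\orarrow{uv}$ has a compatible neighbourhood, and the orientations are well defined exactly as in Lemma \ref{ssat_n}. Adding any pair $\{x,y\}\subseteq W$ in either orientation then creates $G$, with $x,y$ playing the roles of $u,v$ in whichever order matches the new edge. Both roles are available because both $x$ and $y$ are joined to all of $N(u)\cup N(v)$. If one of these roles forces a wrong orientation, I would instead attach $W$ to two copies of $G-\{u,v\}$, one for each orientation, as in the proof of Lemma \ref{ssat_nlogn} with a single trivial bipartition.

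The main obstacle is step (ii): showing that $H_0$ is $G$-free. In $H_0$, every vertex of $W$ has the same neighbourhood in $A$, so any copy of $G$ would embed some independent set $S$ of $G$ into $W$, with all of $S$ mapped to vertices sharing one fixed neighbourhood pattern, and $G-S$ into the smaller graph $A$. I would rule this out by a counting or extremal choice. Choose $\orarrow{uv}$ so that $|N_G(u)\cup N_G(v)|$ is minimal, or more generally use Pikhurko's parameter: the minimum, over independent sets $S$ whose removal leaves the right structure, of the size of the part of $G$ needed to ``see'' $S$. Then show that a copy of $G$ using $W$ would force a copy of $G$ on fewer vertices, or a smaller value of this parameter, a contradiction. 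If this direct argument fails, the fallback is Pikhurko's own construction: replace $A$ by a transitive tournament on $k-2$ vertices minus a carefully chosen set of edges, and let every $w\in W$ sit at one fixed middle position. Then only the $G$-freeness of a single bounded-size configuration, the core plus one vertex of $W$, has to be checked, and acyclicity makes this check a finite comparison of topological positions.
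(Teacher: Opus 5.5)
\textbf{There is a genuine gap: step (ii), that $H_0$ contains no copy of $G$, is the whole content of the upper bound, and it is not proved.}

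The rest of your plan is sound. The lower bound is correct. A cover edge $\orarrow{uv}$ placed consecutively in a topological order has a compatible neighbourhood. Every pair inside $W$ is then semisaturated in both orientations by the symmetry of the two endpoints, so the second copy of $G-\{u,v\}$ is never needed (and it would only make $G$-freeness harder). If $H_0$ were $G$-free, the greedy completion would indeed only add edges meeting $A$.

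The problem with step (ii) is this. As you note yourself, a copy of $G$ in $H_0$ need not use two vertices of $W$ in the roles of $u,v$. It may send any independent set $S$ with $|S|\ge 2$ into the twins and re-embed $G-S$ into $G-\{u,v\}$ in a shifted way. Nothing in the proposal excludes this. The remedies you list are placeholders for the missing argument, not an argument:
\begin{itemize}
\item minimising $|N(u)\cup N(v)|$;
\item ``Pikhurko's parameter'';
\item a transitive-tournament core ``minus a carefully chosen set of edges''.
\end{itemize}
For none of them is it shown why a copy meeting $W$ would produce a smaller value of anything.

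This is not a formality. In the unoriented analogue of your construction it fails for $C_4$, where all edges are equivalent. There $G-\{u,v\}$ is an edge $ab$, the twins are joined to $a$ and $b$, and $a\,w_1\,b\,w_2$ is a $C_4$ in $H_0$. So a proof of (ii) would have to use orientation and acyclicity in an essential way, and you give no such argument.

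The paper sidesteps this difficulty instead of solving it. It builds $H_0$ greedily on $v_1,\dots,v_n$, adding forward edges $\orarrow{v_iv_j}$ in lexicographic order of $(i,j)$. It stops at the first edge $\orarrow{v_xv_{x+y+1}}$ whose addition would create $G$, so $G$-freeness holds by construction. The rest of the argument runs as follows.
\begin{itemize}
\item Acyclicity forces $x\le k-1$; otherwise a transitive tournament on $k$ vertices, which contains $G$, would already have appeared.
\item The vertices $v_{x+1},\dots,v_n$ are twins apart from the edges coming from $v_x$. A twin-replacement argument therefore gives $y<k$, and shows that every $\orarrow{v_xv_j}$ with $j>x+y$ is saturated.
\item In the greedy completion, no $v_i$ with $i>x$ can reach out-degree $y+1$. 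Otherwise it could replace $v_x$ in a copy of $H_0+\orarrow{v_xv_{x+y+1}}$, which contains $G$.
\item This out-degree bound limits the added edges to $O(kn)$.
\end{itemize}
Note that the saturated non-edges in $H_0$ are the pairs $\{v_x,v_j\}$, not pairs inside an independent set. Linearity of the completion comes from this out-degree argument, not from all remaining non-edges meeting a bounded core. That is the idea your proposal is missing.
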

\begin{proof}
    By Lemma \ref{dichotomy_isolated_edge}, we only need an upper bound construction.
    We define a host graph $H$ on the vertex set $\{v_1, \dots, v_n\}$. Starting with the empty graph, we add edges one-by-one in the following way. We take the first vertex $v_1$ and keep adding edges $\orarrow{v_1v_j}$ for $1 < j \leq n$ either until a copy of $G$ is created (in which case we do not add this last edge) or $j=n$. Then we do the same for further vertices $v_2,v_3,\dots$ one-by-one. In a general step, we add edges from some $v_i$ to $v_j$ with $i<j\leq n$ until a copy of $G$ is created (in which case we do not add this last edge) or $j=n$.
    
    This procedure stops (that is, adds no more edges) at some vertex $v_x$ after adding $y\ge 0$ edges $v_xv_j$ with $x<j< n$. First, $x\le k-1$, otherwise when we add the edge $\orarrow{v_{k-1}v_k}$, a copy of the transitive tournament on $k$ vertices is created, which contains $G$. Next, $y< k$, otherwise in the last step, when adding the edge $\orarrow{v_{x}v_{x+y+1}}$, we would create a copy of $G$, necessarily using the new vertex $v_{x+y+1}$. However, the vertices $v_{x+1},\dots v_{x+y}$ all have the same neighborhoods as $v_{x+y+1}$ would have. Thus, we could replace $v_{x+y+1}$ in this copy of $G$ by one of $v_{x+1},\dots,v_{x+y}$ that does not belong to the copy (which is possible since $y\ge k$), showing that a copy of $G$ already existed before the last step.

    We obtain a graph $H_0$ with at most $(k-1)n$ edges such that adding any further edge incident to $v_x$ oriented away from $v_x$ creates a graph $H_0'$ containing $G$. 
    We now greedily extend $H_0$ until it saturates $G$. Let $H$ denote this graph.
    Note that in $H$, no vertex $v_i$, $i>x$ can have an out-degree larger than $y$, as otherwise this vertex could play the role of $v_x$ in a copy of $H_0'$ (while all other vertices are the same) and therefore would also contain $G$. Thus, as $y< k$, we added at most $(k-1)n$ further edges.

    Therefore, the saturation function of $G$ is at most $2(k-1)n$, which completes the proof.
\end{proof}

We note that the original proof of the dichotomy for undirected graphs \cite{KaszonyiTuza86} can also be adapted to our case, but it is more technical. Also, we did not aim to give the best upper bound, whereas they had a bound dependent on an appropriate parameter of the graph.

\begin{proof}[Proof of Theorem \ref{thm:acyclic}]
    It suffices to use Claim \ref{claim:e+acyclic} and Lemma \ref{acyc_linear} to obtain a full characterization of saturation functions for acyclic oriented graphs.
\end{proof}

\begin{lem}
    Let $G$ be an oriented graph that contains a unique vertex $x \in V(G)$ with indegree (or outdegree) zero. Then
    \[\mathrm{sat}_{\shortto}(n,G) \leq \mathrm{sat}_{\shortto}(n,G - x) + O(n).\]
\end{lem}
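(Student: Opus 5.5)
The plan is a direct construction: take a saturated host graph for $G-x$ and add one new vertex that dominates everything. We treat the case where $x$ is the unique vertex of indegree zero. The outdegree-zero case follows by reversing all orientations, since reversal preserves copies and saturation.

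Let $H'$ be an oriented graph on $n-1$ vertices with $\mathrm{sat}_{\shortto}(n-1,G-x)$ edges that is $(G-x)$-free and $(G-x)$-saturated. Form $H$ by adding a new vertex $z$ and all $n-1$ edges $\orarrow{zy}$ for $y\in V(H')$. Then $|E(H)|=\mathrm{sat}_{\shortto}(n-1,G-x)+n-1$. Since $z$ is joined to every vertex, every non-edge of $H$ lies inside $V(H')$.

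First, $H$ saturates $G$. Adding a non-edge $f$ inside $V(H')$ creates a copy $C$ of $G-x$ in $H'+f$. In $G$, all edges at $x$ leave $x$, because its indegree is zero. Since $z$ sends an edge to every vertex of $C$, mapping $x\mapsto z$ extends the copy $C$ to a subgraph of $H+f$ isomorphic to $G$.

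Second, $H$ is $G$-free. Suppose $C$ is a copy of $G$ in $H$. If $C$ avoids $z$, then $C$ lies in $H'$, and so does the copy of $G-x$ inside it, which is a contradiction. If $C$ uses $z$, then $z$ has indegree $0$ in $H$, so the vertex of $G$ it plays has indegree $0$ in $G$. Such a vertex must be $x$ by uniqueness. Removing $z$ then leaves a copy of $G-x$ in $H'$, again a contradiction. We must check here that $G-x$ is a genuine subgraph, so that a copy of $G$ containing $z$ in the role of $x$ restricts to a copy of $G-x$ on the other vertices. This holds because $G-x$ is the induced subgraph on $V(G)\setminus\{x\}$. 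This is the crux of the argument, and the uniqueness hypothesis is used only here.

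The construction gives $\mathrm{sat}_{\shortto}(n,G)\le \mathrm{sat}_{\shortto}(n-1,G-x)+O(n)$. The main remaining obstacle is the index shift from $n-1$ to $n$. I would handle it by showing $\mathrm{sat}_{\shortto}(n-1,G-x)\le \mathrm{sat}_{\shortto}(n,G-x)+O(n)$, or by restating the lemma with $n-1$, which suffices for every order-of-magnitude application in the paper. A natural attempt is to take an extremal host graph $H_n$ for $G-x$ on $n$ vertices and delete a vertex $w$ of minimum degree, which loses at most $O(\log n)$ edges when $H_n$ is sparse. Then add edges greedily until $H_n-w$ is saturated, and bound the number of added edges by $O(n)$ using that every added non-edge was previously saturated through $w$. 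The cleaner fallback is simply to state the bound with $n-1$.
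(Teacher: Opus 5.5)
Your proposal is correct and is essentially the paper's proof: you add to an $(n-1)$-vertex saturated host of $G-x$ a vertex sending edges to all others, note that this vertex has indegree zero so by uniqueness it can only play $x$, and extend each saturating copy of $G-x$ by it. Your reversal argument for the outdegree-zero case and your check that all non-edges lie inside $V(H')$ spell out what the paper leaves implicit. The paper's argument likewise only gives $\mathrm{sat}_{\shortto}(n-1,G-x)+n-1$ and passes over the index shift silently, so your fallback of stating the bound with $n-1$ is exactly what is proved. Your minimum-degree-deletion repair is not justified as written, since nothing bounds by $O(n)$ the number of non-edges of $H_n-w$ whose saturating copies all used $w$, so do not rely on it.
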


\begin{proof}
    We take a host graph $H_{G-x}$ of $G-x$ on $n-1$ vertices. To obtain a host graph $H$ of $G$, we add a vertex $v$ to $H_{G-x}$ and add an edge $v \rightarrow u$ for all $u \in V(H_{G-x})$. 
    First, assume that $H(G)$ contains $G$. As $H_{G-x}$ avoids $G$, the only way for $H(G)$ to contain a copy of $G$ is to have $v$ in it. Since $v$ has a zero indegree, it can only have the role of the vertex $x$. That would mean $H_{G-x}$ contains a copy of $G-x$, which is not possible. Second, $H(G)$ saturates $G$ because $H_{G-x}$ saturates $G-x$.
\end{proof}

We denote by $G_1 \rightarrow G_2$ the family of graphs obtained from two connected oriented graphs $G_1$ and $G_2$ by taking their disjoint union and then choosing vertices $v_1\in V(G_1), v_2\in V(G_2)$ arbitrarily and adding the edge $\orarrow{v_1v_2}$.

\begin{lem} \label{ver_trans}
    Let $G$ be a vertex-transitive graph, and let $G_1$ and $G_2$ be connected subgraphs of $G$. If $|V(G_1)|+|V(G_2)| \geq |V(G)|+1$, then 
    \[\mathrm{sat}_{\shortto}(n, F) = \Theta(n),\]
    for every $F \in G_1 \rightarrow G_2$.
\end{lem}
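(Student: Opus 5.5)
The lower bound comes from Lemma \ref{dichotomy_isolated_edge}. Every $F \in G_1 \rightarrow G_2$ is connected. In every non-degenerate case $F$ has at least $3$ vertices: $|V(G)|\ge 2$ forces $|V(G_1)|+|V(G_2)|\ge 3$. So $F$ has no isolated edge, and hence $\mathrm{sat}_{\shortto}(n,F)=\Omega(n)$. (The trivial case where $G$ is a single vertex gives $F$ a single edge and is excluded.) It therefore remains to give a linear upper bound. I would do this by a greedy completion of a disjoint union of copies of $G$.

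Let $m=|V(G)|$, and write $n=qm+s$ with $0\le s<m$. Let $H_0$ consist of $q$ vertex-disjoint copies $G^{(1)},\dots,G^{(q)}$ of $G$ together with a set $R$ of $s$ isolated vertices. Every connected component of $H_0$ has at most $m$ vertices. But $F$ is connected with $|V(G_1)|+|V(G_2)|\ge m+1$ vertices, so $H_0$ is $F$-free. Starting from $H_0$, I add edges one by one, in any orientation, as long as no copy of $F$ is created. The final graph $H$ is $F$-free and $F$-saturated by construction, so it suffices to bound $|E(H)|$.

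The key step is to show that no edge between two different copies $G^{(i)},G^{(j)}$ is ever added. Suppose that at some stage (so the copies are still present as subgraphs) we try to add $\orarrow{ab}$ with $a\in G^{(i)}$ and $b\in G^{(j)}$. By vertex-transitivity of $G$ there is an automorphism of $G^{(i)}$ mapping the image of $v_1$ (under a fixed embedding of $G_1$ into $G$) to $a$. Hence $G^{(i)}$ contains a copy of $G_1$ in which $v_1$ sits at $a$. Similarly, $G^{(j)}$ contains a copy of $G_2$ in which $v_2$ sits at $b$. These two copies are vertex-disjoint, so together with $\orarrow{ab}$ they form a copy of $F$, and the edge is rejected. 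The reverse orientation $\orarrow{ba}$ is handled by the same argument with the roles of the two copies swapped. Consequently, every edge of $H$ either lies inside a single copy $G^{(i)}$, giving at most $\binom{m}{2}$ per copy and hence at most $mn$ in total, or is incident to $R$, giving at most $|R|\,n<mn$ in total. Thus $|E(H)|\le 2mn=O(n)$.

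I expect the only real subtlety to be the step above: the embeddings of $G_1$ and $G_2$ must use the chosen attachment vertices $v_1,v_2$ at the prescribed endpoints. This is exactly where vertex-transitivity is needed, since it lets us place $v_1$ at an arbitrary vertex of a copy of $G$. The argument is also insensitive to edges that were added inside the copies earlier, because these only enlarge the copies.
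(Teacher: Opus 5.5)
Your proposal is correct and follows the same route as the paper: a disjoint union of copies of $G$, with vertex-transitivity used to place $v_1$ and $v_2$ at the endpoints of any edge added between two different copies. Your greedy completion, together with the observation that cross-copy edges are always rejected, handles the non-edges inside copies and at the leftover vertices, which the paper's short proof passes over; your exclusion of the degenerate case $|V(G)|=1$, where $F$ is a single edge and the lower bound fails, is likewise a point the paper leaves implicit.
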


\begin{proof}
    As $F$ is a connected graph, its saturation function is at least linear. On the other hand, we can construct a host graph by taking linearly many disjoint copies of $G$. Note that $F$ is connected and has more vertices than $G$. Thus, we obtain that each copy of $G$ avoids $G_1 \rightarrow G_2$, and therefore the host graph does so too.
    
    It remains to show that this graph is saturated. If we add an edge between two different copies of $G$, then, by vertex-transitivity, we can find a copy of $G_1$ in one of them and a copy of $G_2$ in the other. Together with the added edge, they form a copy of $F$.
\end{proof}

\begin{cor}
    Let $G$ be a vertex-transitive graph. Let $G_1$ be a spanning subgraph of $G$, and $G_2$ a connected subgraph of $G$. Then
    \[\mathrm{sat}_{\shortto}(n, F) = \Theta(n),\]
    for every $F \in G_1 \rightarrow G_2$.
\end{cor}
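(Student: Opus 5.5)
The plan is to deduce the corollary directly from Lemma \ref{ver_trans}. That lemma requires $G_1$ and $G_2$ to be connected subgraphs of the vertex-transitive graph $G$ with $|V(G_1)|+|V(G_2)| \ge |V(G)|+1$. Since $G_1$ is spanning, $|V(G_1)| = |V(G)|$. Since $G_2$ is a connected subgraph used to build $F$, it has at least one vertex (the edge $\orarrow{v_1v_2}$ needs a vertex $v_2\in V(G_2)$), so $|V(G_2)|\ge 1$. Together these give $|V(G_1)|+|V(G_2)| \ge |V(G)|+1$, which is exactly the size hypothesis of Lemma \ref{ver_trans}.

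The other hypothesis of Lemma \ref{ver_trans} is that $G_1$ is connected. This is implicit in the definition of $G_1 \rightarrow G_2$, which is only defined for connected $G_1,G_2$, so I will read the corollary with that convention. Under it $G$ is connected too, since it contains the connected spanning subgraph $G_1$.

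With both hypotheses checked, Lemma \ref{ver_trans} applies verbatim and gives $\mathrm{sat}_{\shortto}(n,F)=\Theta(n)$ for every $F\in G_1\rightarrow G_2$. The construction behind it is linearly many disjoint copies of $G$, plus a bounded leftover set handled as in that lemma. The argument rests on two points already verified there.

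\begin{enumerate}
\item $F$ is connected and has $|V(G)|+|V(G_2)|>|V(G)|$ vertices, so it fits in no single copy of $G$ and hence in no component of the host graph.
\item Adding an edge between two copies of $G$ creates $F$. In one copy, $G_1$ is the whole copy, since it is spanning. In the other copy, vertex-transitivity lets us place $G_2$ so that the chosen attachment vertex lands at the right endpoint of the new edge.
\end{enumerate}

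The lower bound $\Omega(n)$ comes from Lemma \ref{dichotomy_isolated_edge}, since $F$ is connected with at least one edge and so has no isolated edge (unless $F$ is a single edge, a degenerate case excluded because $|V(G_1)|=|V(G)|\ge 1$ forces $F$ to have at least three vertices in any nontrivial situation). There is no real obstacle here; the only care needed is being explicit about connectivity of $G_1$ and about $G_2$ being nonempty.
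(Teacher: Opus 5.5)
Your proof is correct and matches the paper's: the only thing to check is that $|V(G_1)|=|V(G)|$ (together with $|V(G_2)|\ge 1$), after which Lemma~\ref{ver_trans} applies directly. One small quibble is that your aside on the degenerate case is not quite right: $|V(G)|\ge 1$ does not rule it out, since for $|V(G)|=1$ the graph $F$ is a single edge with saturation function $0$, a trivial exception that the paper's lemma also silently excludes.
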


\begin{proof}
    It suffices to observe that $|V(G)| = |V(G_1)|$, and so the inequality from Lemma \ref{ver_trans} is satisfied.
\end{proof}

As direct consequences of Lemma \ref{ssat_n} and Lemma \ref{ssat_nlogn} about semisaturation, we obtain the same lower bounds for the saturation functions of the respective families of oriented graphs. It is a natural question to investigate whether we can also match the upper bounds.

\begin{lem} \label{sat_nlogn}
    Let $G$ be an oriented graph with edge set $E$ such that for each edge $\orarrow{uv} \in E$ there exists $w \in N_G(u) \cap N_G(v)$, such that either $\{\orarrow{uw}, \orarrow{wv}\} \subset E$ or $\{\orarrow{vw},\orarrow{wu}\} \subset E$. If in addition there exists $\orarrow{xy} \in E$ such that both $N_G(x) \setminus \{y\}$ and $N_G(y) \setminus \{x\}$ form independent sets, then
    \[\mathrm{sat}_{\shortto}(n,G) = \Theta(n \log n).\]
\end{lem}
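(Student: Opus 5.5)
The lower bound comes for free: the hypothesis on $G$ is exactly that of Lemma \ref{ssat_nlogn}, so $\mathrm{sat}_{\shortto}(n,G)\ge \mathrm{ssat}_{\shortto}(n,G)=\Omega(n\log n)$ by Observation \ref{ssat < sat}. For the upper bound I would build a $G$-free graph $H_0$ on $n$ vertices that already semisaturates $G$ on a large independent set $W$. I would then complete it greedily to a saturated graph $H$ and count edges.

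\emph{Choice of the special edge.} The main obstacle is that $G$ may be disconnected, so disjoint copies of pieces of $G$ could together contain $G$. To control this, I would let $C$ be a component of $G$ that is lexicographically maximal in (number of vertices, number of edges), and let $m$ be the number of components of $G$ isomorphic to $C$. I would choose the edge $\orarrow{xy}$ inside such a component $C$. For this to match the hypothesis of the lemma as stated, I would need that the edge with independent neighbourhoods can be taken in a maximal component. If not, I would instead take $C$ to be the component containing $\orarrow{xy}$ and redo the counting below with the corresponding multiplicity. This choice is the step I am least sure of.

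\emph{The graph $H_0$.} It consists of the following parts:
\begin{itemize}
\item one copy of $G-C$;
\item $2k$ copies of $C^{xy}=C-\{x,y\}$, where $k=O(\log n)$;
\item an independent set $W$ of the remaining $n-O(\log n)$ vertices.
\end{itemize}
The vertices of $W$ are joined to the copies of $C^{xy}$ exactly as in the proof of Lemma \ref{ssat_nlogn}. I would fix $k$ bipartitions $(U_i,V_i)$ of $W$ separating every pair, and use two copies of $C^{xy}$ per bipartition, one for each orientation. A vertex of $U_i$ is joined, with the correct orientations, to the copy of $N_G(x)\setminus\{y\}$ in the first copy, and a vertex of $V_i$ to the copy of $N_G(y)\setminus\{x\}$; in the second copy the roles are swapped. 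Adding any edge inside $W$ then completes one of these copies of $C^{xy}$ to a copy of $C$, and together with the copy of $G-C$ this gives a copy of $G$.

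\emph{$H_0$ is $G$-free.} First, no edge incident to $W$ lies in a triangle. For a fixed copy, a vertex $w\in W$ is adjacent only to the image of $N_G(x)\setminus\{y\}$ or only to the image of $N_G(y)\setminus\{x\}$, and both sets are independent by hypothesis. Also $W$ is independent, and distinct copies are disjoint. Every edge of $G$ lies in a (not necessarily oriented) triangle by the hypothesis, so any copy of $G$ avoids the edges at $W$. It therefore lies in the disjoint union of $G-C$ and the copies of $C^{xy}$. Each copy of a component isomorphic to $C$ must map into a host component with at least as many vertices and edges. By the maximality of $C$, such a host component is isomorphic to $C$, and the components of $C^{xy}$ are strictly smaller. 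Hence only the $m-1$ components of $G-C$ isomorphic to $C$ are available for $m$ copies of $C$, which is impossible.

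\emph{Completion and counting.} Now I would greedily add edges to $H_0$ while avoiding $G$, until the graph saturates $G$. No edge is ever added inside $W$, since every such pair already creates a copy of $G$ in either orientation. So all edges of the final $H$ lie inside the gadget part, of size $O(\log n)$, or between $W$ and the gadget. This gives at most $O(\log^2 n)+n\cdot O(\log n)=O(n\log n)$ edges, which completes the upper bound.
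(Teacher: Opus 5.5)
Your approach is the paper's: the lower bound comes from Lemma~\ref{ssat_nlogn}, and the upper bound applies that lemma's construction to a single component. $G$-freeness then follows because no edge at $W$ lies in a triangle while every edge of $G$ does, plus a component count in $H_0-W$. You are more explicit than the paper about the extra copy of $G-C$, the greedy completion, and the final edge count. Your argument for the lexicographically maximal case is correct.

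The step you flagged is a genuine issue, and the paper's proof passes over it too. The paper takes $\orarrow{xy}$ as the distinguished edge and also applies the construction to "one of the largest connected components", which silently assumes $\orarrow{xy}$ lies in such a component. This cannot always be arranged. Take $G$ to be the disjoint union of a cyclic triangle and the tournament $X_2$. Every edge of $G$ lies in a cyclic triangle, but only the edges of the triangle satisfy the independence condition.

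Your fallback, taking $C$ to be the component containing $\orarrow{xy}$, is the right move. However, counting copies of $C$ with multiplicity does not work there, because a copy of $C$ in $G$ can be absorbed by a larger component of $G-C$; in the example, the triangle embeds in $X_2$. Count vertices instead.
\begin{itemize}
\item Let $t=|V(C)|$. Every component of $C^{xy}$ has at most $t-2$ vertices.
\item You have already shown that the nontrivial components of any copy of $G$ lie in $H_0-W$. So each component $D$ of $G$ with $|V(D)|\ge t$ must land inside a component of $G-C$ with at least $t$ vertices.
\item Images of distinct components are vertex-disjoint. Hence the total number of vertices in components of $G$ with at least $t$ vertices is at most the total for components of $G-C$ with at least $t$ vertices.
\item But the latter total is smaller by exactly $t$, a contradiction.
\end{itemize}
This makes the maximality assumption unnecessary, and with it your proof is complete.
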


\begin{proof}
    The lower bound follows from Lemma \ref{ssat_nlogn}.
    For the upper bound, we use the same host construction $H$ as the one provided in the proof of Lemma \ref{ssat_nlogn} using $\orarrow{xy}$ as the distinguished edge. However, we apply the construction to only one of the largest connected components of $G$. It remains to show that this construction indeed avoids the oriented graph $G$.
    
    We assume the contrary and notice that any copy should contain at least one edge incident to the vertex set $W$ (the independent set defined in the construction). Indeed, in $H-W$ there are not enough components of $G$ of the largest size to contain a copy of $G$. So, assume there is a vertex $w \in W$ in the copy of $G$. Notice that by the assumption, $N_H(w)$ forms an independent set since it inherits the neighbors of $x$ and $y$ (with some vertices taken in each copy of the largest component), which both form independent sets. This implies that any edge incident to $w$ is not part of a triangle, contradicting the assumption that each edge in $G$ is in some triangle. See Figure \ref{Gandtriangle} for a special case of this construction.
\end{proof}

\begin{cor}
    Let $G$ be an oriented graph such that every edge belongs to an oriented triangle. If there exists an edge $\orarrow{uv} \in E(G)$ such that $\deg(u) = \deg(v) = 2$, then
    \[\mathrm{sat}_{\shortto}(n,G) = \Theta(n \log n).\]
\end{cor}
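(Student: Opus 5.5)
This corollary should follow directly from Lemma \ref{sat_nlogn}, so my plan is to check its two hypotheses for $G$ and the edge $\orarrow{uv}$.

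\textbf{First hypothesis.} Lemma \ref{sat_nlogn} requires that for every edge $\orarrow{ab}\in E(G)$ there is a common neighbor $w$ with either $\{\orarrow{aw},\orarrow{wb}\}\subset E$ or $\{\orarrow{bw},\orarrow{wa}\}\subset E$. By assumption, $\orarrow{ab}$ lies in an oriented (cyclic) triangle on vertices $a,b,w$. Since the triangle is a directed cycle through the arc $\orarrow{ab}$, its other two arcs must be $\orarrow{bw}$ and $\orarrow{wa}$. This is the second alternative, so the first hypothesis holds. This is also the content of Corollary \ref{cor:triangles}.

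\textbf{Second hypothesis.} I take the distinguished edge $\orarrow{xy}=\orarrow{uv}$.
\begin{enumerate}
\item Since $\deg(u)=2$, the set $N_G(u)\setminus\{v\}$ consists of a single vertex.
\item Likewise, since $\deg(v)=2$, the set $N_G(v)\setminus\{u\}$ consists of a single vertex.
\item A set with one vertex is trivially independent, so both neighborhoods are independent sets.
\end{enumerate}
In fact both sets equal $\{w\}$, where $w$ is the third vertex of the oriented triangle containing $\orarrow{uv}$. This causes no trouble: Lemma \ref{sat_nlogn} only needs each set to be independent, and it does not require the two sets to be disjoint.

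Applying Lemma \ref{sat_nlogn} then gives $\mathrm{sat}_{\shortto}(n,G)=\Theta(n\log n)$. The lower bound alone already follows from Corollary \ref{cor:triangles} together with Observation \ref{ssat < sat}.

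\textbf{Possible obstacle.} The only step that needs attention is whether the construction behind Lemma \ref{sat_nlogn} still works when the two neighborhoods coincide. In the construction, a vertex of $W$ is joined either to the copy of $u$'s neighbor (if it lies in $U_i$) or to the copy of $v$'s neighbor (if it lies in $V_i$), and here both are copies of $w$. So in each of the $k$ copies, every vertex of $W$ sends or receives one arc to or from the copy of $w$. The neighborhood of each vertex of $W$ therefore meets each copy in at most one vertex, and copies are disjoint, so the neighborhood is independent. Hence no edge at $W$ lies in a triangle, exactly as the lemma's proof requires. I therefore expect no real difficulty; the corollary is an immediate specialization.
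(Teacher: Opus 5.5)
Your proposal is correct and follows the paper's argument exactly. The paper also obtains the corollary by applying Lemma \ref{sat_nlogn} directly: cyclic triangles supply the middle vertex $w$, and the degree condition gives $N_G(u)\setminus\{v\}=N_G(v)\setminus\{u\}=\{w\}$, which is trivially independent. Your extra check of the construction is harmless, but it is not needed, since the lemma is used as a black box.
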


\begin{proof}
    It follows as a direct application of Lemma \ref{sat_nlogn}. Figure \ref{Gandtriangle} shows how a host graph with $\Theta(n \log n)$ edges can be constructed for such graphs.
\end{proof}

\begin{figure}[ht]
    \centering
	\includegraphics[width=0.65\textwidth]{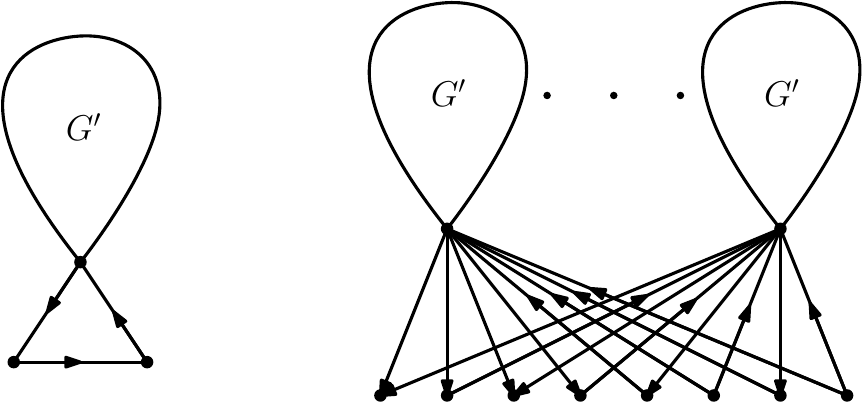}
	\caption{\label{Gandtriangle} Left: the graph $G$ that consists of the graph $G'$ and an oriented triangle meeting at one vertex. Right: the host graph obtained using the copies of $G'$ and adding properly oriented edges with the independent set $W$ drawn at the bottom.}
\end{figure}

\begin{lem} \label{bip nlogn upper}
    Let $G$ be an oriented graph such that there exists an edge $\orarrow{uv} \in E$ for which $N_G(u) \cap N_G(v) \neq \emptyset$, and the graph obtained from $G$ by removing the edge $\orarrow{uv}$ is bipartite. Then
    \[\mathrm{sat}_{\shortto}(n,G) = O(n \log n).\]
\end{lem}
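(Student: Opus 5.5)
The plan is to reuse the host construction from the upper bound in Lemma~\ref{ssat_nlogn}, with $\orarrow{uv}$ as the distinguished edge. We show that this graph is already $G$-free, because it is bipartite while $G$ is not. We then complete it greedily to a saturated graph and check that the completion adds only $O(n\log n)$ edges.

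\textbf{The construction.} Let $G^{uv}=G\setminus\{u,v\}$ and $k=O(\log n)$. Take $2k$ copies of $G^{uv}$ and an independent set $W$ of the remaining $n'=n-2k(|V(G)|-2)$ vertices. Fix $k$ bipartitions $(U_i,V_i)$ of $W$ separating every pair of vertices. For each $i$, join $U_i$ to the copies of the $u$-neighbours and $V_i$ to the copies of the $v$-neighbours in one copy of $G^{uv}$, with the orientations inherited from $G$. Use a second copy with the roles of $U_i$ and $V_i$ swapped. Call the result $H_0$. Exactly as in Lemma~\ref{ssat_nlogn}, adding any edge between two vertices of $W$, in either orientation, creates a copy of $G$. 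Moreover, $H_0$ has $O(n\log n)$ edges, since each vertex of $W$ sends at most $\deg_G(u)+\deg_G(v)$ edges to each of the $2k$ copies.

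\textbf{$H_0$ is $G$-free.} This is the key step.
\begin{itemize}
\item Let $w\in N_G(u)\cap N_G(v)$. Since $u,v,w$ span a triangle, $G$ is not bipartite.
\item In the bipartite graph $G-\orarrow{uv}$, the path $u\,w\,v$ forces $u$ and $v$ onto the same side $A$, with $w$ on the other side $B$. Hence every vertex of $(N_G(u)\cup N_G(v))\setminus\{u,v\}$ lies in $B$.
\item Each copy of $G^{uv}$ is bipartite with the bipartition induced by $(A,B)$. Each vertex of $W$ is adjacent only to vertices playing the roles of neighbours of $u$ or $v$, all of which lie in the $B$-side of their copy. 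Since $W$ is independent, we can colour all of $W$ with colour $A$.
\item This gives a proper $2$-colouring of $H_0$. A bipartite graph has no odd cycle, so $H_0$ contains no triangle and in particular no copy of $G$.
\end{itemize}

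\textbf{Completion and counting.} Starting from $H_0$, add missing edges one at a time, in any admissible orientation, as long as no copy of $G$ is created. Stop when the graph $H$ is $G$-saturated; this is possible because $H_0$ is $G$-free. No edge with both endpoints in $W$ is ever added: any such edge, in either orientation, together with the edges of $H_0\subseteq H$ already yields a copy of $G$. So every added edge has an endpoint in $V(H)\setminus W$. This set has only $2k(|V(G)|-2)=O(\log n)$ vertices, so at most $O(n\log n)$ edges are added. Altogether, $H$ is a $G$-saturated graph with $O(n\log n)$ edges, which gives $\mathrm{sat}_{\shortto}(n,G)=O(n\log n)$.

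I expect the main obstacle to be the $G$-freeness step, specifically checking that the vertices of $W$ attach only to one colour class of each copy. This requires that $u$ and $v$ share a side in $G-\orarrow{uv}$, and that is exactly where the hypothesis $N_G(u)\cap N_G(v)\neq\emptyset$ is used.
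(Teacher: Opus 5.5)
Your proposal is correct and follows the paper's proof: reuse the host construction of Lemma~\ref{ssat_nlogn} with $\orarrow{uv}$ distinguished, note that $u$ and $v$ lie on the same side of a bipartition of $G-\orarrow{uv}$, so all attachment vertices lie on the other side and $W$ can be coloured like $u$. The host graph is then bipartite, hence $G$-free since $G$ contains a triangle. Your explicit greedy completion, in which no pair inside $W$ is ever added and so only $O(n\log n)$ edges at the $O(\log n)$ vertices outside $W$ can appear, fills in a step the paper leaves implicit.
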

\begin{proof}
    First observe that $G$ is non-bipartite, as it contains a triangle. We use the same construction of the host graph $H$ as in Lemma \ref{ssat_nlogn} with distinguished edge $\orarrow{uv}$, which semisaturates $G$. It remains to show that $H$ avoids $G$. By assumption, the graph obtained from $G$ by removing the edge $\orarrow{uv}$ is bipartite. We can partition its vertices into $B$ and $W$. Since $N_G(u) \cap N_G(v)$ is non-empty, we know that $u$ and $v$ are in the same part, wlog. $B$. Then, it follows that $N = (N_G(u) \cup N_G(v)) \setminus \{u,v\} \subseteq W$.

    Following the construction in Lemma \ref{ssat_nlogn}, we take $k$ copies of the graph $G^{uv} = G \setminus \{u,v\}$ and $n'$ isolated vertices. Then, for each copy of $G^{uv}$, we take the corresponding set $N$ and connect it to the isolated vertices. Since all the vertices of $N$ are in $W$, we can take the isolated vertices to be in $B$. This preserves the bipartiteness of the host graph $H$. As $G$ is non-bipartite, it follows that $H$ contains no copy of $G$. 
\end{proof}

For the rest of this section, we focus on specific examples of oriented graphs. Let $D_2$ be the oriented graph shown in Figure \ref{D0_D1_D2_H1}, and $W_2$ (resp. $Q_2$) obtained by adding an extra outgoing (resp. ingoing) leaf edge to $D_2$. We can generalize $D_2, W_2, Q_2$ to $D_s, W_s, Q_s$ by replacing the two parallel oriented paths of length two with $s$ such paths. Remarkably, the saturation functions of these three families have different orders of magnitude.

\begin{figure}[ht]
	\centering
	\includegraphics[width=0.65\textwidth]{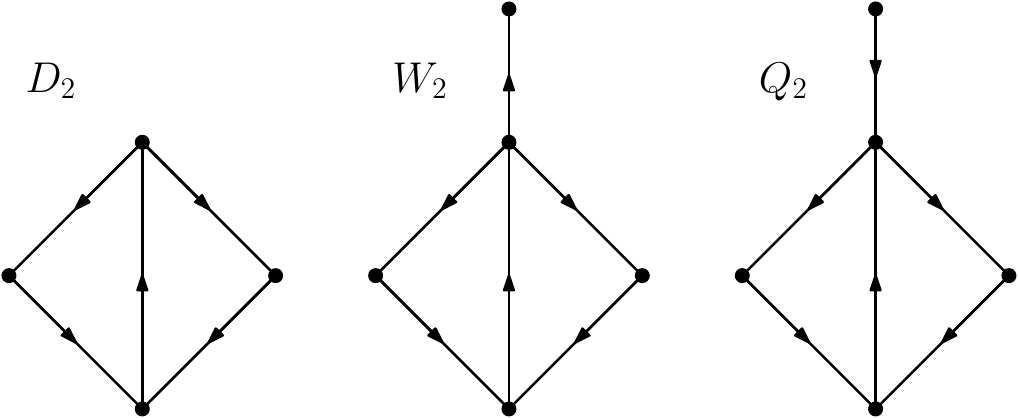}
	\caption{\label{D0_D1_D2_H1} Three oriented graphs whose saturation functions have different orders of magnitude.}
\end{figure}

We first observe that the saturation function of $D_s$ can be obtained as a direct consequence of Lemma \ref{ssat_nlogn} and Lemma \ref{bip nlogn upper}.

\begin{cor}
    For $s \ge 1$,
    \[\mathrm{sat}_{\shortto}(n,D_s)=\Theta(n \log n).\]
\end{cor}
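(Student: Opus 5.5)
The plan is to verify that $D_s$ satisfies the hypotheses of both Lemma \ref{ssat_nlogn} and Lemma \ref{bip nlogn upper}, and then combine them. I read $D_s$ (the figure is not reproduced here) as follows: there are two distinguished vertices $u,v$ joined by the edge $\orarrow{vu}$, and $s$ internally disjoint oriented paths $u \to w_i \to v$ for $i=1,\dots,s$. Thus every $w_i$ closes a cyclically oriented triangle $u w_i v$ with the edge $\orarrow{vu}$. For $s=2$ this should be exactly the pictured $D_2$.

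For the lower bound, I check the hypothesis of Lemma \ref{ssat_nlogn} edge by edge.
\begin{itemize}
\item The edge $\orarrow{vu}$ has $w_1$ as a common neighbour, and $\{\orarrow{u w_1},\orarrow{w_1 v}\}\subset E$.
\item An edge $\orarrow{u w_i}$ has the common neighbour $v$, with $\orarrow{w_i v}$ and $\orarrow{vu}$ forming a directed path from $w_i$ back to $u$.
\item An edge $\orarrow{w_i v}$ has the common neighbour $u$, with $\orarrow{vu},\orarrow{u w_i}$ giving the required directed path from $v$ to $w_i$.
\end{itemize}
In other words, every edge lies in a cyclic triangle, so Corollary \ref{cor:triangles} applies and gives $\mathrm{ssat}_{\shortto}(n,D_s)=\Theta(n\log n)$. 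Observation \ref{ssat < sat} then yields $\mathrm{sat}_{\shortto}(n,D_s)=\Omega(n\log n)$.

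For the upper bound I apply Lemma \ref{bip nlogn upper} with the distinguished edge $\orarrow{vu}$. Its common neighbourhood $N_{D_s}(u)\cap N_{D_s}(v)=\{w_1,\dots,w_s\}$ is nonempty since $s\ge 1$. Deleting $\orarrow{vu}$ leaves the union of the paths $u w_i v$, which is bipartite with parts $\{u,v\}$ and $\{w_1,\dots,w_s\}$. The lemma therefore gives $\mathrm{sat}_{\shortto}(n,D_s)=O(n\log n)$, and together with the lower bound this proves the corollary.

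The only real obstacle is pinning down the precise orientation of $D_s$ from the figure. If $D_s$ instead has $\orarrow{uv}$ together with paths $u\to w_i\to v$ (transitive triangles), then the edge $\orarrow{u w_i}$ would have no directed $2$-path between its endpoints, and Lemma \ref{ssat_nlogn} would not apply. So I would first confirm that each triangle is cyclically oriented; this is forced if the corollary is to follow from Lemma \ref{ssat_nlogn} as the text claims. After that, both hypothesis checks are immediate.
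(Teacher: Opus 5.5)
Your proposal is correct and matches the paper, which derives the corollary directly from Lemma~\ref{ssat_nlogn} for the lower bound and Lemma~\ref{bip nlogn upper} for the upper bound, applied to the edge closing the $s$ triangles; your hypothesis checks are exactly what that derivation needs. Your reading of the orientation is also right: in the proof for $W_s$, the paths $v_k\to v_i\to v_{k+s+1}$ together with $\orarrow{v_{k+s+1}v_k}$ are said to form a copy of $D_s$, so each triangle is cyclically oriented.
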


We now show constructively the linearity of the saturation function of $W_s$.

\begin{prop}
    For $s \ge 1$,
    \[\mathrm{sat}_{\shortto}(n,W_s)=\Theta(n).\]
\end{prop}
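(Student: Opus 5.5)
The lower bound $\mathrm{sat}_{\shortto}(n,W_s)=\Omega(n)$ is immediate from Lemma \ref{dichotomy_isolated_edge}, since $W_s$ has no isolated edge. For the upper bound I will give an explicit host graph with $O(n)$ edges. I read $D_s$ (from Figure \ref{D0_D1_D2_H1}) as vertices $a,b,c_1,\dots,c_s$ with edges $\orarrow{ab}$, $\orarrow{ac_i}$ and $\orarrow{c_ib}$. I read $W_s$ as $D_s$ together with one extra vertex $z$ and an outgoing leaf edge $\orarrow{bz}$ at the vertex $b$, which has indegree $s+1$. If the figure attaches the leaf elsewhere, the construction below must be adapted by letting the new vertices play that vertex instead.

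\emph{Construction.} Take a core $K$ that is a transitive tournament on $s+1$ vertices $a\to c_1\to\dots\to c_s$, with $\orarrow{ac_i}$ for all $i$ and $\orarrow{c_ic_j}$ for $i<j$. Every one of the remaining $n-s-1$ vertices $w$ receives all $s+1$ edges $\orarrow{aw}$ and $\orarrow{c_iw}$ from the core, and there are no other edges. The number of edges is $\binom{s+1}{2}+(s+1)(n-s-1)=O(n)$.

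\emph{Saturation.} Inside $K$ and between $K$ and the outside, all pairs are already edges. So the only non-edges are pairs $\{w,w'\}$ of non-core vertices. Adding $\orarrow{ww'}$ creates a copy of $W_s$: the vertices $a,c_1,\dots,c_s,w$ span a copy of $D_s$ in which $w$ plays $b$ (we have $\orarrow{aw}$, $\orarrow{ac_i}$, $\orarrow{c_iw}$), and $\orarrow{ww'}$ serves as the leaf edge $\orarrow{bz}$. By symmetry the same holds for $\orarrow{w'w}$.

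\emph{$W_s$-freeness.} This is the main point to verify. In any copy of $W_s$, the vertex playing $b$ has indegree at least $s+1$ and outdegree at least $1$. A core vertex has indegree at most $s$ in $H$, since the non-core vertices send no edges into $K$, so it cannot play $b$. A non-core vertex has outdegree $0$, so it cannot play $b$ either. Hence $H$ contains no copy of $W_s$. The main obstacle is only the degree bookkeeping, which depends on where exactly the leaf is attached. If the leaf is attached at $a$ or at some $c_i$, I would redefine the attachment pattern of the outside vertices, and possibly orient the core differently, so that the vertex role carrying the leaf can be played only by outside vertices that have no out-edges. The same degree-counting argument then goes through.
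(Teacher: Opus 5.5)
There is a genuine gap: you have misidentified $D_s$ and $W_s$, and your construction does not saturate the actual $W_s$. In the paper the edge between $a$ and $b$ is $\orarrow{ba}$. So $D_s$ is the union of $s$ \emph{oriented} (cyclic) triangles $a\to c_i\to b\to a$ sharing the edge $\orarrow{ba}$, and the outgoing leaf of $W_s$ hangs at $a$, the unique vertex of outdegree $s+1$ in $W_s$. The text forces this reading. The paper derives $\mathrm{sat}_{\shortto}(n,D_s)=\Theta(n\log n)$ from Lemma~\ref{ssat_nlogn}, which requires every edge to lie on a cyclic triangle. Under your reading, $D_s$ and $W_s$ would be acyclic, Theorem~\ref{thm:acyclic} would make both linear, and the proposition would be a trivial corollary of Lemma~\ref{acyc_linear}. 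Against the true $W_s$, your host graph is trivially $W_s$-free because it is acyclic, but that is exactly why saturation fails. Adding any $\orarrow{ww'}$ keeps it acyclic: order the core transitively, then $w$, then $w'$, then the remaining vertices. Hence no oriented triangle, and so no copy of $W_s$, is ever created.

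Your fallback does not rescue this. The leaf-carrying vertex $a$ has outdegree $s+1$, so it cannot be played by sinks. A bounded core joined to linearly many outside vertices also tends to contain $W_s$ already. For instance, take $\orarrow{c_ib}$ in the core and $\orarrow{bw},\orarrow{wc_i}$ for every outside $w$. Each $w$ is then the top $a$ of a copy of $D_s$, as intended. But so is $b$, with outside vertices playing the $c_i$'s and the leaf, so $H$ contains $W_s$.

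The missing idea is to make every vertex the top $a$ of a \emph{local} copy of $D_s$ while keeping all outdegrees at most $s$, which gives $W_s$-freeness for free. The paper takes $\lfloor n/(2s+1)\rfloor$ disjoint copies of the circulant tournament $X_s$ on $v_1,\dots,v_{2s+1}$, with $\orarrow{v_iv_j}$ iff $1\le j-i\le s \pmod{2s+1}$. Every outdegree is exactly $s$. For each $k$, the edges $v_k\to v_{k+i}\to v_{k+s+1}\to v_k$ ($1\le i\le s$) form a copy of $D_s$ with $v_k$ in the role of $a$. So any new edge between two copies, leaving some $v_k$, serves as the leaf of a copy of $W_s$. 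The fewer than $2s+1$ leftover isolated vertices are then handled greedily with $O(n)$ further edges.
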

\begin{proof}
    It suffices to construct a host graph with a linear number of edges. Let $X_{s}$ be a tournament on vertex set $V(X_s) = \{v_{1}, \dots, v_{2s+1}\}$ and edge set \[E(X_s) = \{\orarrow{v_{i}v_{j}} : 1 \leq j - i \leq s \pmod{2s+1}\}.\] 

    Let $H$ be a host graph on $n$ vertices obtained from a disjoint union of $\floor{\frac{n}{2s+1}}$ copies of $X_{s}$ and an independent set $I$ of isolated vertices.

    Notice that $H$ avoids $W_s$ because there exists a vertex in $W_s$ with outdegree $s+1$, while the maximum outdegree in $H$ is $s$. On the other hand, we want to show that adding any edge within $H \setminus I$ creates a copy of $W_s$. By symmetry, it suffices to consider an edge going out of the vertex $v_k$, for $1 \leq k \leq 2s+1$. We want to argue that the subgraph induced by the vertices $v_k, \dots, v_{k+s+1}$ contains a copy of $D_s$. By the construction of $X_{s}$, there is a $2$-path $v_k \rightarrow v_i \rightarrow v_{k+s+1}$ for every $i \in \{k+1, \dots, k+s\}$. Moreover, $\orarrow{v_{k+s+1}v_{k}} \in E(X_{s})$, which indeed forms a copy of $D_s$. We can greedily add linearly many edges from $I$, until the host graph fully saturates $W_s$.
\end{proof}

\begin{figure}[ht]
	\centering
	\includegraphics[width=0.35\textwidth]{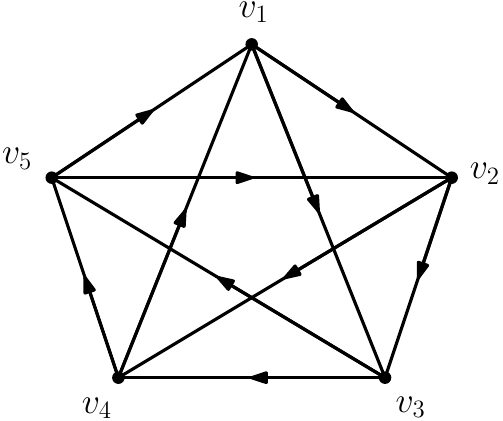}
	\caption{\label{X2} The graph $X_2$. The host graph $H$ that saturates $W_2$ is obtained as a disjoint union of copies of $X_2$ and an independent set.}
\end{figure}

Finally, we end this section by determining the order of magnitude of the saturation functions for $Q_s$, where $s \geq 2$. The proof is divided into two parts: lower and upper bounds. The lower bound relies on a subtle application of the tools introduced earlier, while the upper bound is obtained through a non-trivial construction.

\begin{thm}
    For $s\ge 2$, \[\mathrm{sat}_{\shortto}(n,Q_s)=\Theta(n\sqrt{\log n}).\footnote{The saturation function of $Q_1$ is linear, as follows from the same construction used for $W_1$.}\]
\end{thm}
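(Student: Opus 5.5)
We take $D_s$ to have vertices $a,b,c_1,\dots,c_s$, edges $\orarrow{ac_i}$, $\orarrow{c_ib}$ for all $i$, and $\orarrow{ba}$; this is the structure used in the proof for $W_s$. We read $Q_s$ as $D_s$ plus a leaf $z$ with the edge $\orarrow{zb}$, so the in-degree of $b$ becomes $s+1$; this is inferred from the outdegree argument for $W_s$ and should be checked against the figure. Every edge of $D_s$ lies in an oriented triangle $a\to c_i\to b\to a$. The leaf edge of $Q_s$ has a compatible neighborhood, so Theorem \ref{thm:ssat} only gives a linear lower bound. Both the upper and the lower bound therefore have to use the $Q_s$-freeness of the host graph $H$.

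\textbf{Lower bound.} Let $H$ saturate $Q_s$, and let $B\subseteq V(H)$ be the set of vertices playing the role of $b$ in some copy of $D_s$ in $H$. Since $H$ is $Q_s$-free, every $v\in B$ has in-degree exactly $s$. A new edge $\orarrow{xy}$ either plays the leaf role, which forces $y\in B$, or plays a triangle edge of $D_s$, which forces a directed $2$-path between its endpoints as in the proof of Lemma \ref{ssat_nlogn}.
\begin{itemize}
\item Every non-edge inside $V_2=V(H)\setminus B$ must be covered by $2$-paths in both orientations. Corollary \ref{cor:subsat} then gives $\Omega(|V_2|\log|V_2|)$ edges.
\item Non-edges $\{x,y\}$ with $y\in B$ and $x\notin B$ are handled for $\orarrow{xy}$, but $\orarrow{yx}$ still needs a $2$-path.
\end{itemize}
So $H$ must make $B$ very large, of size $n-o(n)$, yet $B$ is expensive. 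Each $b\in B$ needs its own $D_s$-certificate: an out-neighbor $a$ with $s$ paths $a\to c_i\to b$, where the $c_i$ are exactly the in-neighbors of $b$. Each $b\in B$ also needs the orientation $\orarrow{bx}$ covered for most $x\notin B$.

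My plan is a threshold argument with a parameter $d=\sqrt{\log n}$.
\begin{itemize}
\item If $\Omega(n)$ vertices have degree at least $d$, we already have $\Omega(n\sqrt{\log n})$ edges.
\item Otherwise, the low-degree vertices of $B$ are grouped by their (bounded-size) in-neighborhoods, which are $s$-sets spanning a common out-neighbor $a$. $Q_s$-freeness should force these groups to be small: two $b$'s sharing the same $c_i$'s cannot both receive an extra in-edge. Then the covering of the remaining non-edges by bipartite graphs $\pi^{-1}(t)$ is applied to the quotient by groups. Claim \ref{claim:bipdense} gives some vertex in $\Omega(\log n)$ bipartite graphs, which costs degree $\Omega(\log n/d)$ per deletion step, as in the deletion procedure of Lemma \ref{ssat_nlogn}.
\end{itemize}
Balancing the two cases at $d=\sqrt{\log n}$ gives $\Omega(n\sqrt{\log n})$.

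\textbf{Upper bound.} I would modify the construction of Lemma \ref{ssat_nlogn}. Split $W$ into $n/2^{\sqrt{\log n}}$ blocks of size $m=2^{\sqrt{\log n}}$.
\begin{itemize}
\item Inside each block, separate pairs with $\log m=\sqrt{\log n}$ bipartitions via attached gadgets, for $O(\sqrt{\log n})$ edges per vertex.
\item Between blocks, treat whole blocks as super-vertices and separate them with $O(\log n/\sqrt{\log n})$ bipartitions. A block-level gadget is a copy of $D_s$ minus the triangle edge, attached so that one of its out-neighbors is shared by a whole block; this costs $O(\sqrt{\log n})$ per vertex.
\item Pairs $\{x,y\}$ where only the orientation into $y$ is needed are made to fall under the leaf rule. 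To achieve this, every vertex of $W$ is placed in $B$ by giving it exactly $s$ in-neighbors forming a $D_s$ with a block hub.
\end{itemize}
Freeness of $Q_s$ would then be verified by checking that no vertex in a $b$-role ever has in-degree greater than $s$, and that the gadgets create no unintended triangles, as in Lemma \ref{sat_nlogn}.

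The main obstacle is the lower bound, specifically making rigorous that $Q_s$-freeness bounds how many $B$-vertices can share certificates. That is exactly what converts the $\log n$ from Claim \ref{claim:bipdense} into $\sqrt{\log n}$ per vertex. I would first try to prove that the in-neighborhoods of $B$-vertices sharing an $a$ form a structure to which a Katona–Szemerédi type bound still applies with loss factor $d$.
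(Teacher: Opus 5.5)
\textbf{Your proposal fails at the first step because the leaf is attached to the wrong vertex.} In the paper, $Q_s$ is $D_s$ plus an edge $\orarrow{za}$ into the top vertex $a$. This vertex has out-degree $s$ and in-degree $1$ in $D_s$, and it is the same vertex that carries the outgoing leaf of $W_s$. That is why the paper's proof uses that the relevant vertices have in-degree exactly one. It is also why an instar, i.e.\ $D_s$ minus its top vertex, is the in-star $c_1,\dots,c_s\to b$.

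For your graph $D_s+\orarrow{zb}$ the statement is false. The map $a\leftrightarrow b$ is an isomorphism between $D_s$ and its reverse, so $D_s+\orarrow{zb}$ is the reverse of $W_s$. Reversing all edges of a saturated host preserves saturation, so its saturation function is $\Theta(n)$. Concretely, disjoint copies of $X_s$ have maximum in-degree $s$. Each $v_k$ is the $b$-vertex of the $D_s$ with $a=v_{k+s}$ and $c_t=v_{k-t}$ ($1\le t\le s$), so every added edge into a copy creates your graph.

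Thus your central claim that ``$B$ is expensive'' is wrong: here all but $O(1)$ vertices lie in $B$ at constant cost. No version of your degree-threshold/quotient argument can give $\Omega(n\sqrt{\log n})$ for your graph. Even apart from this, the grouping step and the ``loss factor $d$'' use of Claim \ref{claim:bipdense} are only hoped for, not argued. The $Q_s$-freeness invariant in your upper bound (in-degree at most $s$ at $b$-vertices) is the invariant for the wrong graph.

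\textbf{For the actual $Q_s$, the missing idea is the rigidity forced by in-degree one.} Let $V_1$ be the vertices that serve as the attachment vertex of the hanging edge for some non-edge, and let $V_2=V\setminus V_1$. Each $v\in V_1$ has in-degree exactly $1$, so its unique in-neighbor is the center of an instar of a $D_s$ with top $v$. These instars lie in $V_2$, and each edge lies in at most $s-1$ of them. Hence, with $N=|V_2|$, every center $w_i$ receives $a_i=O(N)$ vertices of $V_1$, and $\sum a_i=|V_1|$.

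\begin{itemize}
\item Non-edges inside $V_2$ can only be saturated through directed $2$-paths, so Corollary \ref{cor:subsat} gives $\Omega(N\log N)$ edges.
\item If $\orarrow{w_jw_i}\notin E(H)$, then every $v_i$ attached to $w_i$ must be adjacent to $w_j$. Otherwise adding $\orarrow{v_iw_j}$ would need an oriented triangle through the unique in-edge $\orarrow{w_iv_i}$. This gives $\sum_{i<j}\min\{a_i,a_j\}=\Omega(n^2/N)$ edges when $N=O(n/\sqrt{\log n})$.
\end{itemize}
The two bounds balance at $N\approx n/\sqrt{\log n}$.

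The paper's matching construction is tight at each of these steps. It takes $\lceil\sqrt{\log n}\rceil$ centers forming $V_2$, and a set $V_3$ of size $\Theta(n/\sqrt{\log n})$ oriented completely into $V_2$. Each $v\in V_1$ receives one edge from $V_2$ and sends $s$ edges to an $s$-group of $V_3$, with each (center, group) pair used exactly $s-1$ times. Finally, an $O(|V_3|\log|V_3|)$ bipartite gadget inside $V_3$ saturates $D_s$ there.
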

\begin{proof}    
    We first prove the lower bound. Let $H$ be an $n$-vertex host graph saturating $Q_s$, with vertex set $V$. We denote by $D_s$ the graph obtained from $Q_s$ by deleting the leaf vertex and the incident edge. An \emph{instar} is the graph obtained from $D_s$ by deleting its top vertex, which is adjacent to the leaf vertex of $Q_s$. The unique vertex of indegree $s$ in an incherry is called its \emph{center}.
    
    Every non-edge plays the role of some edge in a copy of $Q_s$ once it is added to $H$. Let $V_1$ consist of those vertices that are incident to a non-edge that can serve as the hanging edge of a copy of $Q_s$ in which this vertex is not the leaf vertex of this copy of $Q_s$. Note that the vertices of $V_1$ have indegree exactly one, since a larger indegree would create a copy of $Q_s$ in $H$. 

    Let $V_2 = V \setminus V_1$. By the definition of $V_1$, for each $v\in V_1$, there exists a copy of $D_i$ in $H$ such that $v$ corresponds to its top vertex, and there is an instar on the remaining $s+1$ vertices of this copy of $D_s$. Note that while for a given $v_1\in V_1$, this instar might not be unique, its center is unique. For simplicity, we can fix one such instar for each vertex of $V_1$.

    First, we claim that every vertex belonging to one of these instars must be in $V_2$. Indeed, if an instar contained a vertex of $V_1$, using the fact that every vertex $V_1$ has indegree exactly $1$, we obtain a copy of $Q_s$. See the rightmost figure of Figure \ref{fig:incherries} for $s=2$.
    
    Second, we claim that each edge $e$ is an edge of at most $s-1$ defined above instars. Indeed, each such instar contains an oriented triangle through $e$. If there are $s$ instars through $e$, then there would be $s$ oriented triangles. Together with an additional edge from one of these instars playing the role of the leaf, we would get a copy of $Q_s$, which is a contradiction. See Figure \ref{fig:incherries} for $s=2$.

    \begin{figure}[ht]
    	\centering
    	\includegraphics[width=0.8\textwidth]{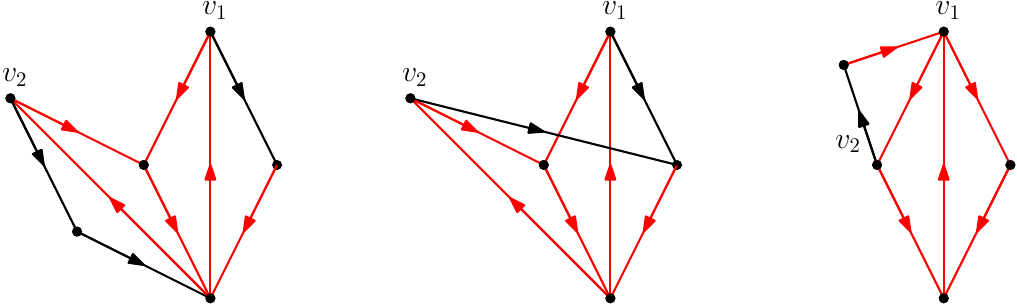}
    	\caption{\label{fig:incherries} If an edge is in at least $s=2$ instars (left and middle figure) or if one instar has a vertex $v_2$ in $V_1$ (right figure), then $H$ contains $Q_s$ (red).}
    \end{figure}

    Observe that within $V_2$, no non-edge can be saturated such that it serves as the hanging edge of $Q_s$. Hence, for every non-edge in $V_2$, adding this edge would create a copy of $D_s$. By Corollary \ref{cor:subsat}, $H$ has $\Omega(|V_2|\log |V_2|)$ edges. If $|V_2|=\Omega(n/\sqrt{\log n})$, then we are done. 
    
    From now on, we may assume that $N=|V_2|=O(n/\sqrt{\log n})$. 
    For each vertex of $V_2$, let $a_i$ denote the number of instars defined above that have this vertex as their center, and order these numbers so that
    \[a_1\le a_2\le \dots \le a_{N}\le \frac{(s-1)(N-1)}{s}.\]
    The last inequality follows because each instar consists of $s$ edges, all contained in $V_2$, and each edge belongs to at most $s-1$ instars. Since we fixed exactly one instar for each vertex of $V_1$, we have
    \[\sum_{1 \leq i \leq N} a_i = |V_1| = n-N.\] 
    Moreover, since $N=O(n/\sqrt{\log n})$, we have $n-N\ge n/2$ for all sufficiently large $n$. 
    
    We claim that for any pair of vertices $w_i, w_j\in V_2$, with incident instars corresponding to some $v_i,v_j\in V_1$, respectively, if $\orarrow{w_jw_i}\notin E(H)$, then $w_j$ and $v_i$ must be adjacent in $H$. Indeed, otherwise, the non-edge $\orarrow{v_iw_j}$ needs to be saturated. Adding this edge cannot serve as the leaf edge of a copy of $Q_s$ (as $w_j$ is in $V_2$). Thus, it must complete an oriented triangle, which would use the edge $\orarrow{w_iv_i}$, as $v_1$ has indegree one. This would require an edge $\orarrow{w_jw_i}$ in $H$, which does not exist. See Figure \ref{fig:wiwjforceedge}.

    \begin{figure}[ht]
    	\centering
    	\includegraphics[width=0.4\textwidth]{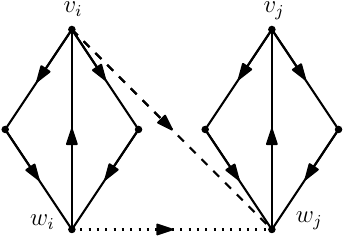}
    	\caption{\label{fig:wiwjforceedge} No edge from $w_i$ to $w_j$ forces an edge $v_iw_j$ in $H$.}
    \end{figure}

    For a fixed vertex $w_i$, there are $a_i$ corresponding vertices in $V_1$. Since there cannot be an edge oriented in both directions between $w_i$ and $w_j$, the previous argument implies that there are at least $\min \{a_i,a_j\}$ edges between $V_1$ and $\{w_i,w_j\}$. Therefore, the number of edges between $V_1$ and $V_2$ is at least $\sum_{1\le i<j\le N}\min \{a_i,a_j\}$. We are left to give a lower bound to \[A=\sum_{1\le i<j\le N}\min \{a_i,a_j\} = (N-1)a_1+(N-2)a_2+\dots 2a_{N-2}+a_{N-1}.\]
        
    Recall that for large enough $n$, we have $\sum a_i\ge  n/2$, and $a_N\le N\le cn/\sqrt{\log n}$ for some constant $c$. Assuming only these, we get that the sum $A$ is smallest when $N$ is as large as possible, i.e., $N=\lfloor cn/\sqrt{\log n}\rfloor$, and the $a_i$ are $0$ up to some point, after which they are all equal to $N$ (with one possible exception). In this case, we have at least $x= \lfloor n/(2N)\rfloor$ values of $a_i$ that are equal to $N$. At least $\lfloor x/2\rfloor-1$ of them are multiplied by a factor of at least $\lfloor x/2\rfloor$ in $A$. Consequently,
    \[A \geq \lfloor x/2\rfloor(\lfloor x/2\rfloor-1) N = \Omega(n^2/N) = \Omega (n\sqrt{\log n}).\] 
    Hence, the host graph $H$ contains at least this many edges, finishing the proof of the lower bound.
\bigskip    

    We now present the construction showing the matching upper bound. The idea is to provide a construction for which the upper bound would be tight at each step. First, we saturate all non-edges in some subset $V_1 \subseteq V$ that contains most vertices. Then, we saturate most non-edges in the remaining set, as done in Lemma \ref{ssat_nlogn}, using only a small set of vertices so that we do not need to make further iterations.

    We first construct an almost saturating graph $H_0$ on $n$ vertices, in which all but a few non-edges are saturated in both orientations. The desired saturated graph $H$ is then obtained by greedily orienting the remaining non-edges until the graph becomes saturated.

    \begin{figure}[ht]
        \centering
        \includegraphics[width=1\linewidth]{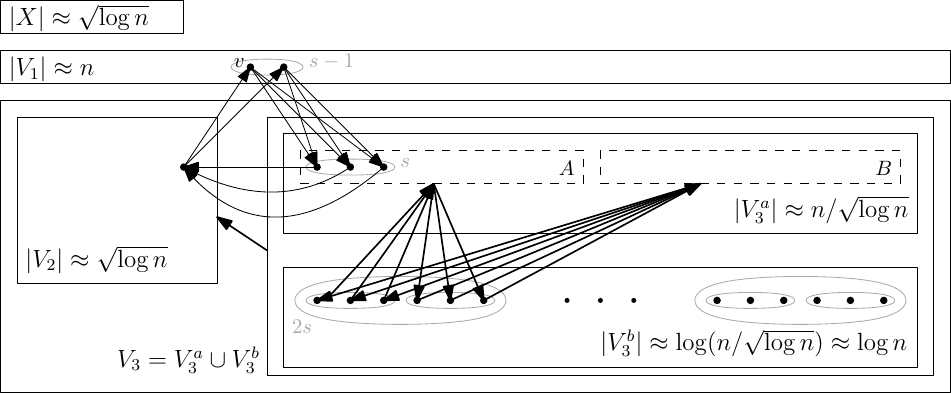}
        \caption{The host graph $H_0$ that (almost) saturates $Q_3$.}
        \label{fig:diamondplus}
    \end{figure}

    For the structure of $H_0$, see Figure \ref{fig:diamondplus}, which shows the case $s=2$, the general case being similar. We may assume that $n$ is large enough.
    Let $y=\lceil \sqrt{\log n}\rceil $ and let $V_2\subset V $ such that $|V_2|=y$.
    We partition the remaining vertices into disjoint sets $V_1$, $V_3$, and $X$ such that $|V_3|$ is divisible by $s$ and 
    \[ y\cdot |V_3|\frac{s-1}{s}=|V_1|,
    \qquad
    |X|<(s-1)y+s.\]
    This can be easily done by starting with $V_1=V\setminus V_2$ and moving vertices from $V_1$ to $V_3$ in sets of size $s$, as long as $y\cdot|V_3|(s-1)/s\le |V_1|$. We stop just before this would become false. Note that at this point for $x=|V_1|-y\cdot|V_3|(s-1)/s$, we have $0\le x < (s-1)y+s=O(\sqrt {\log n})$. Now take $x$ vertices from $V_1$ to form $X$. Finally, note that 
    \[|V_3|\le \frac{ns}{(s-1)y} \le \frac{2n}{\sqrt{\log n}}.\]
    
    We now define the edges of $H_0$. We add every edge between $V_3$ and $V_2$, oriented towards $V_2$. We partition the vertices of $V_3$ into sets of size $s$ that we call \emph{groups}. The number of sets consisting of a vertex from $V_2$ and a group from $V_3$ is $y|V_3|/s=|V_1|/(s-1)$. Consequently, we can assign each vertex $v\in V_1$ to one such set so that every set is assigned exactly $s-1$ times. Fix such an assignment.
    
    For each $v\in V_1$, we add the edge from the vertex assigned to $v$ in $V_2$ oriented towards $v$ and $s$ edges oriented from $v$ to all vertices of the group in $V_3$ assigned to $v$. Thus, we obtain a copy of $D_s$ on these $s+2$ vertices, one copy for each $v\in V_1$. Notice that each vertex in $V_1$ has indegree $1$ and outdegree $s$. Moreover, the complete bipartite graph between $V_2$ and $V_3$ is oriented towards $V_2$, and each of its edges belongs to exactly $s-1$ of the copies of $D_s$ defined above.

    The sets $V_1$ and $V_2$ each remain independent in $H$. It remains to define the edges inside $V_3$. As in the proofs of Lemma \ref{ssat_nlogn} and Lemma \ref{sat_nlogn}, we construct a graph on $V_3$ that saturates $D_s$ while avoiding $Q_s$.

    To this end, let $z=2s\lceil \log |V_3|\rceil$. Let $V_3^{b}$ be the set formed by $z/s$ groups of $V_3$, and let $V_3^{a} = V_3 \setminus V_3^b$.
    Now define $\lceil \log |V_3|\rceil=z/(2s)$ partitions of $V_3^a$ into two sets such that every pair of vertices in $V_3^a$ is separated by at least one of these partitions. For each bipartition $A\cup B=V_3^a$, assign $2$ groups of $V_3^b$ such that no vertex is assigned twice. Connect all vertices of $A$ with the first group of vertices oriented towards the first group. Similarly, connect all vertices of $B$ with the second group oriented towards the second group of vertices. Also, add all edges from the first group towards $B$ and from the second group towards $A$. This forms a complete bipartite graph between $V_3^a$ and the $2s$ vertices of $B$ in these two groups. Repeating this construction for every bipartition, we get a complete bipartite graph with parts $V_3^a$ and $V_3^b$. See Figure \ref{fig:diamondplus}. 

    We added $O(n)$ edges between $V_2$ and $V_3$, $O(n)$ edges between $V_1$ and $V_2\cup V_3$, and \[|V_3^a||V_3^b|=O(|V_3|\log |V_3|)=O(n\sqrt{\log n})\] edges within $V_3$, since $|V_3|\le ns/\sqrt{\log n}$. Thus, $H_0$ has $O(n\sqrt{\log n})$ edges in total.

    Next, we show that $H_0$ avoids $Q_s$. Assume the contrary, that $H_0$ contains a copy of $Q_s$. Observe that $Q_s$ contains $s$ oriented triangles. The subgraph induced by $V_3$ is bipartite, and all edges between $V_2$ and $V_3$ are oriented towards the independent set $V_2$. Hence, the subgraph induced by $V_2\cup V_3$ contains no oriented triangles.
    
    Therefore, some vertex $v\in V_1$ is in this copy of $Q_s$, and thus the oriented triangle through $v$ must have its other two vertices in $V_2$ and $V_3$, respectively. Since every edge between $V_2$ and $V_3$ is contained in exactly $s-1$ oriented triangles, the edge of $Q_s$ that lies in $s$ oriented triangles must be incident to $v$. Yet, the edge from $v$ to $V_3$ has only one oriented triangle through it, so the edge from $V_2$ to $v$ must be the edge of $Q_s$ contained in two oriented triangles. Notice that this edge has exactly $s$ oriented triangles through it, so the $D_s$ part of this $Q_s$ must be exactly the $D_s$ we put on $v$ in this construction phase. But then $v$ must have indegree at least $2$, contradicting the fact that $v$ has indegree exactly $1$. 

    Finally, we prove that almost all non-edges are saturated in $H_0$. We distinguish three possible cases: 
    
    \begin{enumerate}
    \item Suppose that we add a new edge oriented towards a vertex $v\in V_1$. Together with the copy of $D_s$ associated with $v$ and its assigned $(s+1)$-set, this creates a copy of $Q_s$.
    
    \item Suppose that we add a new edge $\orarrow{v_1v_3}$, where $v_1\in V_1$ and $v_3\in V_3$. There is a unique edge from some $v_2\in V_2$ such that $\orarrow{v_2v_1}\in E(H_0)$. Moreover, there exist $s-1$ vertices $v_1^1,v_1^2,\dots v_1^{s-1}\in V_1$ such that their assigned $s+1$-set is the same and contains the vertices $v_2$ and $v_3$. Thus, there are $s-1$ oriented triangles in $H_1$ through $\orarrow{v_2v_3}$. The added edge creates an additional oriented triangle on $v_1,v_3,v_2$, and we obtain a copy of $D_s$. With an appropriate $v_3'\in V_3$, $v_3'\ne v_3$ that has an edge oriented towards $v_2$, we get a copy of $Q_s$.
    
    \item Suppose that we add a new edge $\orarrow{ab}$ with $a,b\in V_3^a$. Choose one of the bipartitions $A\cup B=V_3^a$ constructed above that separates $a$ and $b$. There are two groups associated with this bipartition, one of them together with $a$ and $b$, forms a copy of $D_s$. Taking an additional vertex from $V_1$ that has an edge oriented towards $b$, we obtain a copy of $Q_s$.
    \end{enumerate}

    Thus, the only non-edges that may remain unsaturated are
    $O(n\sqrt{\log n})$ non-edges incident to $X$,
    $O(n\sqrt{\log n})$ non-edges between $V_1$ and $V_2$, $O(\log^2 n)$ non-edges within $V_2$, and $O(\log^2 n)$ non-edges within $V_3^b$.
    In total, this is $O(n\sqrt{\log n})$. We can then greedily add edges to get $H$, finishing the proof.
\end{proof}

\section{Further questions}

We conclude with several remarks and open questions regarding the oriented saturation problem.

\begin{ques}
    Is there a nontrivial universal upper bound for the saturation function? 
\end{ques}

One possible approach is to adapt the construction provided in Lemma \ref{ssat_n}, but only on two isolated vertices (instead of a linear number of them). If that construction avoids the graph $G$, then we can obtain $\binom{n}{2} - \Theta(n)$ as an upper bound for this family of graphs.

\begin{ques} \label{distance_graph}
    It might be tempting to think about Lemma \ref{ver_trans} in the opposite direction. We take any connected graph, and we want to complete it into a vertex-transitive graph. The main question that arises is: how small (in terms of the number of vertices) can such a vertex-transitive graph be? 
\end{ques}

A possible approach is to construct the smallest cyclically-ordered graph for which no two edges with the same distance have different orientations. 

Another question that has also remained open in the edge-ordered setting~\cite{boskesz026} is the following.

\begin{ques}
    Are there finitely many different orders of magnitude for the saturation functions of oriented graphs?
\end{ques}

In Subsection \ref{with_isolated_edge}, we studied the oriented graphs that contain an isolated edge and proved that there is a dichotomy in this case. We also showed various examples with bounded and linear saturation functions. 

\begin{ques}
    Can the saturation functions of oriented graphs containing an isolated edge be characterized?
\end{ques}

While investigating this special case, we introduced the notion of deletion-homogeneous graphs that are useful for producing more examples with a bounded saturation function. It would be interesting to further study this family of oriented graphs, possibly independently of the saturation problem.

We noticed that there are certain similarities, but also differences in studying the saturation in oriented graphs compared to edge-ordered graphs~\cite{boskesz026}. Therefore, it is natural to ask the following.

\begin{ques}
    Can any of the proof techniques developed in this paper be used to study further the saturation functions of edge-ordered graphs? How about the other way around?
\end{ques}

\section*{Acknowledgements}

V.B.~is supported by the European Research Council (ERC) via grant agreement ``ERMiD'' No.\ 101054936, and by the Hungarian National Research, Development and Innovation Office, Advanced grant 153378. 

B.K.~is supported by the European Research Council (ERC) via grant agreement ``ERMiD'' No.\ 101054936, and by the Hungarian National Research, Development and Innovation Office, EXCELLENCE-24 project No.~151504.

\bibliographystyle{plainurl}
\bibliography{orientedsat}

\end{document}